\newcommand{\Cdb}{\ensuremath{\mathbb{C}}}
\newcommand{\Ndb}{\ensuremath{\mathbb{N}}}
\newcommand{\Rdb}{\ensuremath{\mathbb{R}}}
\newcommand{\Zdb}{\ensuremath{\mathbb{Z}}}
\newcommand{\A}{\mbox{${\mathcal A}$}}
\newcommand{\E}{\mbox{${\mathcal E}$}}
\newcommand{\F}{\mbox{${\mathcal F}$}}
\renewcommand{\O}{\mbox{${\mathcal O}$}}
\newcommand{\R}{\mbox{${\mathcal R}$}}
\renewcommand{\S}{\mbox{${\mathcal S}$}}
\newcommand{\norm}[1]{\Vert#1\Vert}
\newcommand{\bignorm}[1]{\bigl\Vert#1\bigr\Vert}
\newcommand{\Bignorm}[1]{\Bigl\Vert#1\Bigr\Vert}
\newtheorem{theorem}{Theorem}[section]
\newtheorem{lemma}[theorem]{Lemma}
\newtheorem{corollary}[theorem]{Corollary}
\newtheorem{proposition}[theorem]{Proposition}
\newtheorem{definition}[theorem]{Definition}
\theoremstyle{remark}
\newtheorem{remark}[theorem]{\bf Remark}
\theoremstyle{definition}
\newtheorem{example}[theorem]{\bf Example}
\numberwithin{equation}{section}
\begin{document}

\title[Multivariable $H^\infty$ functional calculus]{New properties of the multivariable $H^\infty$ functional calculus of
sectorial operators}

\author[O. Arrigoni]{Olivier Arrigoni}
\email{olivier.arrigoni@univ-fcomte.fr}
\author[C. Le Merdy]{Christian Le Merdy}
\email{clemerdy@univ-fcomte.fr}
\address{Laboratoire de Math\'ematiques de Besan\c con, UMR 6623, 
CNRS, Universit\'e Bourgogne Franche-Comt\'e,
25030 Besan\c{c}on Cedex, FRANCE}

\date{\today}

\maketitle

\begin{abstract}
This paper is devoted to the multivariable
$H^\infty$ functional calculus associated with 
a finite commuting family of sectorial operators on Banach space. 
First we prove that if $(A_1,\ldots, A_d)$ is such a family,
if $A_k$ is $R$-sectorial of
$R$-type $\omega_k\in(0,\pi)$, $k=1,\ldots,d$, and if 
$(A_1,\ldots, A_d)$ admits a bounded $H^\infty(\Sigma_{\theta_1}\times
\cdots\times\Sigma_{\theta_d})$ joint functional calculus 
for some $\theta_k\in (\omega_k,\pi)$, then it 
admits a bounded $H^\infty(\Sigma_{\theta_1}\times
\cdots\times\Sigma_{\theta_d})$ joint functional calculus 
for all $\theta_k\in (\omega_k,\pi)$, $k=1,\ldots,d$. Second
we introduce square functions adapted to the multivariable case
and extend to this 
setting some of the well-known one-variable results
relating the boundedness of $H^\infty$ functional calculus 
to square function estimates. Third, on $K$-convex reflexive spaces,
we establish sharp dilation properties for 
$d$-tuples $(A_1,\ldots, A_d)$ which
admit a bounded $H^\infty(\Sigma_{\theta_1}\times
\cdots\times\Sigma_{\theta_d})$ joint functional calculus 
for some $\theta_k<\frac{\pi}{2}$.
\end{abstract}

\vskip 1cm
\noindent
{\it 2000 Mathematics Subject Classification :} 
47A60, 47A20, 47D03.

\vskip 1cm

\section{Introduction}

This paper deals with the $H^\infty$ functional calculus of finite 
families of commuting sectorial operators on Banach space. 
This multivariable form of the classical (=single valued) $H^\infty$ functional calculus was introduced by 
D. Albrecht in \cite{Al} on $L^p$-spaces. In this setting, more results 
were obtained in \cite{FMI} shortly after Albrecht's thesis. 
Later on in \cite{LLLM} and \cite{KW1}, 
$H^\infty$ functional calculus for couples of sectorial operators was 
investigated on general Banach spaces, with applications 
to abstract maximal regularity.

A new approach to the multivariable $H^\infty$ functional 
calculus was recently 
given by the authors in \cite{ArLM}. In particular we proved that 
if $(A_1,\ldots, A_d)$ is a finite family 
of commuting sectorial operators on a Banach space $X$ with property 
$(\alpha)$, and if each $A_k$ admits a bounded 
$H^\infty$ functional calculus, then
$(A_1,\ldots, A_d)$ admits a bounded $H^\infty$ joint functional calculus
(the case $d=2$ goes back to \cite{LLLM}).
Further, in the specific case when $X=L^p(\Omega)$ for $1<p<\infty$, 
we showed that 
$(A_1,\ldots, A_d)$ admits a bounded $H^\infty(\Sigma_{\theta_1}\times
\cdots\times\Sigma_{\theta_d})$ joint functional calculus 
for some $\theta_k<\frac{\pi}{2}$, $k=1,\ldots d$, 
if and only if
$(A_1,\ldots, A_d)$ can be dilated into a commuting $d$-tuple  
$(B_1,\ldots, B_d)$ of sectorial operators of types
$<\frac{\pi}{2}$ on some $L^p(\Omega')$,
such that $e^{-tB_k}$ is a positive contraction for any $k=1,\ldots,d$ and any 
$t\geq 0$ (the case $d=1$ goes back to \cite{AFLM}).

The present paper is devoted to new properties of the multivariable
$H^\infty$ functional calculus. We work with families
$(A_1,\ldots, A_d)$ acting on large classes of Banach spaces,
so that a bounded $H^\infty$ functional calculus
for each $A_k$ does not ensure that $(A_1,\ldots, A_d)$ 
admits a bounded $H^\infty$ joint functional calculus.
Our purpose is two-fold. On the one hand we extend an important result of 
N. Kalton and L. Weis on the optimal angle of bounded
$H^\infty$ functional calculus. It is shown in 
\cite[Proposition 5.1]{KW1} that if $A$ is an 
$R$-sectorial operator of $R$-type
$\omega\in(0,\pi)$ and if $A$ admits a bounded 
$H^\infty(\Sigma_\theta)$ functional calculus
for some $\theta\in(\omega,\pi)$, then it actually
admits a bounded $H^\infty(\Sigma_\theta)$ functional calculus
for all $\theta\in(\omega,\pi)$.  Theorem \ref{d-KW} below provides 
a $d$-variable version of this result. The proof
relies on the so-called Franks-McIntoch decomposition 
of analytic functions on sectors, which
was introduced in \cite{FMI}. In the
case $d=1$, the proof of Theorem \ref{d-KW}
yields a new proof of \cite[Proposition 5.1]{KW1}.

On the other hand, under the mild assumption that
$X$ is reflexive and $K$-convex, 
we show in Theorem \ref{d-FW1} that 
if $(A_1,\ldots,A_d)$ admits a bounded  
$H^\infty(\Sigma_{\theta_1}\times
\cdots\times\Sigma_{\theta_d})$ joint functional calculus 
for some $\theta_k<\frac{\pi}{2}$, $k=1,\ldots,d$, then
it can be dilated into a commuting $d$-tuple  
$(B_1,\ldots, B_d)$ of sectorial operators 
on a Bochner space $L^2(\Omega;X)$,
such that each $B_k$ generates a bounded $c_0$-group and 
$(B_1,\ldots, B_d)$ admits a bounded 
$H^\infty(\Sigma_{\frac{\pi}{2}}\times
\cdots\times\Sigma_{\frac{\pi}{2}})$ joint functional calculus.
We also investigate the converse issue of how to 
deduce a bounded $H^\infty$ joint functional calculus from a dilation property. This set of results is given in Section 5. It
aims at generalizing most of the remarkable results 
of A. Fr\"ohlich and L. Weis (see \cite{FW}) established in the case $d=1$. 

In the classical $H^\infty$ functional calculus theory,
square functions play a prominent role. We refer the
reader e.g. to \cite{CDMY,Survey,KW2,HH,BOOK}
and the references therein for information. 
In Section 4, we introduce square functions associated 
to a commuting family of sectorial opertors and extend to this 
setting some of the well-known one-variable results
relating the boundedness of $H^\infty$ functional calculus 
to square function estimates. The approach is
similar to the one-variable case, however these results 
are important because they are used in the proofs of the dilation
results presented above.

\section{Preliminaries, functional calculus and $R$-boundedness}\label{S2}
We first give some general notations and conventions which will be used 
along the paper. All our Banach spaces $X$ will
be complex ones. We let $B(X)$ denote the Banach algebra of all
bounded operators on $X$, and we let $I_X$ denote the
identity operator on $X$.

Let $A$ be a possibly unbounded operator on $X$.
We let $D(A)$, $N(A)$ and $R(A)$ denote its domain, its kernel and its
range, respectively. We let $\sigma(A)$ denote the spectrum of $A$
and for any $z\in\Cdb\setminus\sigma(A)$, we let 
$R(z,A) = (z I_X-A)^{-1}$ denote the resolvent operator. 

For any measure space $(\Omega,dm)$ and any $1\leq p<\infty$, 
we let $L^p(\Omega;X)$ denote the 
Bochner space of all (classes of) measurable functions
$\zeta\colon\Omega\to X$ such that $\norm{\zeta(\,\cdotp)}_X$ belongs to
$L^p(\Omega)$. This is a Banach space for the norm
$$
\norm{\zeta}_{L^p(\Omega;X)}\,=\,\Bigl(
\int_\Omega\norm{\zeta(t)}_X^p\, dm(t)\Bigr)^{\frac{1}{p}}.
$$
See e.g. \cite{DU} for some background.

Whenever $\O$ is a set, $X$ is a Banach space and $f\colon \O\to
X$ is a bounded 
function, we set
$$
\norm{f}_{\infty,\O}\,=\,\sup
\bigl\{\norm{f(\lambda)}_X\, :\, \lambda\in\O\bigr\}.
$$
If $\O\subset \Cdb^d$ is a non empty open set for some integer $d\geq 1$,
we let $H^\infty(\O;X)$ denote the Banach space of all
bounded holomorphic functions from $\O$ into $X$,
equipped with $\norm{\,\cdotp}_{\infty,\O}$. When
$X=\Cdb$, this space is simply denoted by $H^\infty(\O)$.
This is a Banach algebra.

We will assume that the reader is familiar with the basics 
of semigroup theory. We refer e.g. to \cite{Go,Paz} for information.

If $\varphi_1,\varphi_2$ are numerical functions acting on a set $\E$,
we will write  that ``$\varphi_2(e)\lesssim\varphi_1(e)$ for 
$e\in\E$" if 
there exists a constant $K>0$ such that $\varphi_2(e)\leq K\varphi_1(e)$
for any $e\in\E$. Further we will write $\varphi_2(e)\approx\varphi_1(e)$
if we both have $\varphi_2(e)\lesssim\varphi_1(e)$ and
$\varphi_1(e)\lesssim\varphi_2(e)$ for 
$e\in\E$.

Following \cite{ArLM}, we now 
recall the definitions and some features of 
the $H^\infty$ functional calculus of a commuting finite family of sectorial operators.

For any $\theta \in (0,\pi)$, we let
$$
\Sigma_\theta = \left\lbrace z \in \mathbb{C}^* 
\, :\, \left|\text{Arg}(z) \right| < \theta \right\rbrace
$$
be the open sector of angle $2\theta$ around the positive real line.

Let $X$ be a Banach space and let $A\colon D(A)\to X$
be a closed and densely defined operator on $X$. We say that $A$ is
sectorial of type $\omega \in (0,\pi)$ if its spectrum
satisfies $\sigma(A) \subset \overline{\Sigma_{\omega}}$ 
and for any $\theta$ in $(\omega,\pi)$, 
there exists a constant $C_\theta\geq 0$ such that
\begin{equation}\label{sectoriel}
\left\| zR(z,A) \right\| \leq C_\theta, \qquad z \in \mathbb{C} \setminus \overline{\Sigma_{\theta}}.
\end{equation}
It follows from the Laplace formula that if $-A$ is 
the generator of a bounded $C_0$-semigroup on $X$, then $A$ is sectorial 
of type $\frac{\pi}{2}$. Further
$A$ is sectorial of type $\omega < \frac{\pi}{2}$ 
if and only if $-A$ is the generator of a bounded 
analytic semigroup.

We fix an integer $d\geq 1$ and consider a $d$-tuple
$(A_1,\ldots,A_d)$ of commuting sectorial operators
on $X$, with respective types $\omega_1,\ldots,\omega_d$.

Let $\theta_1,\ldots,\theta_d$ in $(0,\pi)$.
For any subset 
$\Lambda\subset \left\lbrace 1,\ldots,d \right\rbrace $, 
we let $H^\infty_0 \bigl(\prod_{k \in \Lambda} 
\Sigma_{\theta_k} \bigr)$ denote 
the subalgebra of $H^\infty (\Sigma_{\theta_1} 
\times \cdots \times 
\Sigma_{\theta_d})$ of all bounded holomorphic
functions $f\colon \Sigma_{\theta_1} 
\times \cdots \times 
\Sigma_{\theta_d}\to\Cdb\,$ depending only on the variables 
$(z_k)_{k\in \Lambda}$, such that there exist a constant  
$C\geq 0$ and a family $(s_k)_{k \in \Lambda}$ of positive
real numbers satisfying
\begin{equation} \label{Hinfini0}
\vert f(z_1,\ldots,z_d) \vert \leq \,
C\,\prod_{k \in \Lambda} \dfrac{|z_k|^{s_k}}{(1+|z_k|)^{2s_k}}, 
\qquad (z_k)_{k\in \Lambda} \in \prod_{k \in \Lambda} 
\Sigma_{\theta_k}.
\end{equation}

Assume now that $\theta_k\in(\omega_k,\pi)$ and let 
$\nu_k\in(\omega_k,\theta_k)$, for any $k=1,\ldots,d$. 
Given any $f$ in $H_0^{\infty}\bigl(\prod_{k\in\Lambda} 
\Sigma_{\theta_k}\bigr)$, with 
$\Lambda \subset \left\lbrace 1,\ldots,d \right\rbrace$, 
$\Lambda \neq \emptyset$, we set
\begin{equation}\label{fAi}
f(A_1,\ldots,A_d) =\Bigl(\frac{1}{2\pi i} \Bigr)^{|\Lambda|} 
\int_{\prod_{k \in \Lambda} \partial \Sigma_{\nu_k}} 
f(z_1,\ldots,z_d) \prod_{k \in \Lambda} R(z_k,A_k) \prod_{k \in \Lambda} dz_k,
\end{equation}
where the boundaries $\partial \Sigma_{\nu_k}$ are 
oriented counterclockwise. 
The assumptions (\ref{sectoriel}) and (\ref{Hinfini0}) 
ensure that this integral is absolutely 
convergent and defines an element of $B(X)$. Further 
by Cauchy's Theorem, this definition does not depend on 
the choice of the $\nu_k$, $1\leq k\leq d$. 

When $J=\emptyset$, $H^\infty_0 \bigl(\prod_{k \in 
\emptyset} \Sigma_{\theta_k} \bigr)$ 
is equal to the space of constant functions on 
$\Sigma_{\theta_1} \times \cdots \times \Sigma_{\theta_d}$. 
For such a function $f \equiv a$ (here $a\in\Cdb$),
we set $f(A_1,\ldots,A_d) = a I_X$.

It turns out that the sum of the spaces
$H^\infty_0 \bigl(\prod_{k \in \Lambda} 
\Sigma_{\theta_k} \bigr)$, for $\Lambda\subset\{1,\ldots,d\}$, 
is a direct one. Then this sum 
\begin{equation}\label{Hinfini01}
H_{0,1}^\infty(\Sigma_{\theta_1} \times 
\cdots \times \Sigma_{\theta_d}) : = 
\bigoplus_{\Lambda \subset 
\left\lbrace 1,\ldots,d \right\rbrace} H^\infty_0 
\Bigl( \prod_{k \in \Lambda} \Sigma_{\theta_k} \Bigr)
\end{equation}
is a subalgebra of $H^\infty (\Sigma_{\theta_1} 
\times \cdots \times 
\Sigma_{\theta_d})$.

For any function 
$$
f = \sum_{\Lambda \subset \left\lbrace 1,\ldots,d 
\right\rbrace} f_\Lambda\ \in
H_{0,1}^\infty(\Sigma_{\theta_1} 
\times \cdots \times \Sigma_{\theta_d}),
$$ 
with $f_\Lambda\in H^\infty_0 \left(\prod_{k \in \Lambda} 
\Sigma_{\theta_k} \right)$, we set
$$
f(A_1,\ldots,A_d) = \sum_{\Lambda\subset 
\left\lbrace 1,\ldots,d \right\rbrace} f_\Lambda(A_1,\ldots,A_d).
$$
Then the functional calculus mapping $f\mapsto
f(A_1,\ldots,A_d)$ from 
$H_{0,1}^\infty(\Sigma_{\theta_1} \times \cdots 
\times \Sigma_{\theta_d})$ into $B(X)$ 
is an algebra homomorphism (see \cite[Lemma 2.1]{ArLM}).

\begin{definition}\label{Bdd-FC1}
We say that $(A_1,\ldots,A_d)$ admits an 
$H^\infty(\Sigma_{\theta_1} \times 
\cdots \times \Sigma_{\theta_d})$ joint functional calculus if 
there exists a constant $K >0$ such that 
\begin{equation}\label{Bdd-FC}
\left\| f(A_1,\ldots,A_d) \right\| \leq K \left\| f \right\|_{\infty, 
\Sigma_{\theta_1} \times \cdots \times \Sigma_{\theta_d}},\qquad
f\in H_{0,1}^{\infty}(\Sigma_{\theta_1} \times \cdots\times \Sigma_{\theta_d}).
\end{equation}
\end{definition}

In the above definition it would be tempting to talk about a ``bounded 
$H^\infty(\Sigma_{\theta_1} \times 
\cdots \times \Sigma_{\theta_d})$ joint functional calculus" as we did in 
the Introduction. However for convenience we choose to drop the word
``bounded" from now on. Also when $d=1$, we naturally
drop the word ``joint" and simply say that a sectorial operator 
$A$ admits an $H^\infty(\Sigma_\theta)$ functional calculus
if we have an estimate $\norm{f(A)}\lesssim\norm{f}_{\infty,\Sigma_\theta}$,
for $f\in H^\infty_0(\Sigma_\theta)$.

It will be convenient to extend Definition \ref{Bdd-FC1} to the limit case when 
$\theta_k$ is replaced by $\omega_k$, as follows.

\begin{definition}\label{Bdd-FC2}
We say that $(A_1,\ldots,A_d)$ admits an 
$H^\infty(\Sigma_{\omega_1}\times\cdots\times\Sigma_{\omega_d})$ 
joint functional calculus if it admits an 
$H^\infty(\Sigma_{\theta_1}\times\cdots\times\Sigma_{\theta_d})$ 
joint functional calculus for any $\theta_k\in(\omega_k,\pi)$,
and there exists a constant $K\geq 0$ such that
(\ref{Bdd-FC}) holds true for any $\theta_k\in(\omega_k,\pi)$,
$k=1,\ldots,d$.
\end{definition}

For any integer $m\geq 1$, we set
\begin{equation}\label{Phi_m}
\Phi_m(z) = \,\frac{m^2 z}{(m+z)(1+mz)}\,, \quad z\in\Cdb\setminus\Rdb_-.
\end{equation}
A key property of this sequence is that whenever $A$ is a sectorial
operator, we have
\begin{equation}\label{Approx}
\forall\, x\in \overline{R(A)},\qquad \lim_{m}\Phi_m(A)x\,=x,
\end{equation}
see e.g. \cite[2.C]{Survey}. 
We show below that this approximation property allows 
to simplify the definition of $H^\infty$ joint functional calculus
in the case when all the $A_k$ have a dense range 
(in this case, $H^\infty_{0,1}$ is replaced by $H^\infty_0$
in the estimate (\ref{Bdd-FC})).

\begin{lemma}\label{DenseRange}
Assume that $\overline{R(A_k)}=X$ for all $k=1,\ldots,d$.
If there exists a constant $K>0$ such that
\begin{equation}\label{Bdd-FC3}
\left\| f(A_1,\ldots,A_d) \right\| \leq K \left\| f \right\|_{\infty, 
\Sigma_{\theta_1} \times \cdots \times \Sigma_{\theta_d}}
\end{equation}
for any
$f\in H_{0}^{\infty}(\Sigma_{\theta_1} \times \cdots\times \Sigma_{\theta_d})$,
then $(A_1,\ldots,A_d)$ admits an 
$H^\infty(\Sigma_{\theta_1}\times\cdots\times\Sigma_{\theta_d})$ 
joint functional calculus.
\end{lemma}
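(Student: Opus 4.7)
The plan is to derive \eqref{Bdd-FC} from the hypothesis by regularizing an arbitrary $f \in H^\infty_{0,1}(\Sigma_{\theta_1} \times \cdots \times \Sigma_{\theta_d})$ via multiplication by $\prod_{k=1}^d \Phi_m(z_k)$, applying the assumed $H^\infty_0$-bound to the resulting function, and then removing the regularization using the strong convergence $\Phi_m(A_k) \to I_X$ provided by \eqref{Approx} and the density of $R(A_k)$ in $X$.

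First, given $f \in H^\infty_{0,1}(\Sigma_{\theta_1} \times \cdots \times \Sigma_{\theta_d})$, I would set
$$
f_m(z_1, \ldots, z_d) = f(z_1, \ldots, z_d) \prod_{k=1}^d \Phi_m(z_k), \qquad m \geq 1.
$$
A standard geometric estimate on sectors provides constants $M_k \geq 1$ such that $\|\Phi_m\|_{\infty, \Sigma_{\theta_k}} \leq M_k$ for every $m$, whence $\|f_m\|_\infty \leq \bigl(\prod_k M_k\bigr) \|f\|_\infty$. Since $\Phi_m(w) = O(|w|)$ near $0$ and $O(|w|^{-1})$ near infinity, $f_m$ satisfies the decay condition \eqref{Hinfini0} with $\Lambda = \{1, \ldots, d\}$ and exponents $s_k = 1$, hence $f_m \in H^\infty_0(\Sigma_{\theta_1} \times \cdots \times \Sigma_{\theta_d})$. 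The hypothesis \eqref{Bdd-FC3} then yields the uniform-in-$m$ estimate
$$
\|f_m(A_1, \ldots, A_d)\| \leq K \Bigl(\prod_{k=1}^d M_k\Bigr) \|f\|_\infty.
$$

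Next, since $\prod_k \Phi_m(z_k) \in H^\infty_0 \subset H^\infty_{0,1}$ and the $H^\infty_{0,1}$ functional calculus is an algebra homomorphism (\cite[Lemma 2.1]{ArLM}), a direct Fubini computation from \eqref{fAi} gives
$$
f_m(A_1, \ldots, A_d) = f(A_1, \ldots, A_d) \prod_{k=1}^d \Phi_m(A_k).
$$
The operators $\Phi_m(A_k)$ commute and are uniformly bounded in $m$, while \eqref{Approx} together with $\overline{R(A_k)} = X$ ensures $\Phi_m(A_k) x \to x$ for every $x \in X$ and each $k$. A standard telescoping argument then gives $\prod_k \Phi_m(A_k) x \to x$ for every $x$, hence $f_m(A_1, \ldots, A_d) x \to f(A_1, \ldots, A_d) x$ as $m \to \infty$. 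Passing to the limit in the previous displayed estimate produces \eqref{Bdd-FC} with constant $K \prod_k M_k$, which is the desired conclusion.

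The main technical point to track is the uniform control $\sup_m \|\Phi_m\|_{\infty, \Sigma_{\theta_k}} < \infty$, which rests on a purely geometric sector estimate and then transfers to the operator-level bound $\sup_m \|\Phi_m(A_k)\| < \infty$ via sectoriality; both are well known. The algebraic identity for $f_m(A_1, \ldots, A_d)$ and the telescoping passage to the limit are then routine given the multiplicativity of the $H^\infty_{0,1}$ calculus.
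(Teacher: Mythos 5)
Your proposal is correct and follows essentially the same route as the paper's proof: regularize $f$ by the tensor function $\Phi_m^{\otimes d}$, apply the $H^\infty_0$ hypothesis to $f\Phi_m^{\otimes d}$ with the uniform sector bound on $\Phi_m$, and pass to the strong limit using \eqref{Approx} and the dense range assumption. The only additions (verifying the decay exponents $s_k=1$ for $f_m$ and the telescoping argument for the product convergence) are details the paper leaves implicit, so there is nothing to change.
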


\begin{proof}
Let $f\in H_{0,1}^{\infty}(\Sigma_{\theta_1} 
\times \cdots\times \Sigma_{\theta_d})$.
For any $m\geq 1$, let 
\begin{equation}\label{Phi_m^d}
\Phi_m^{\otimes d}=\Phi_m\otimes\cdots\otimes \Phi_m
\end{equation} be the element
of $H^\infty_{0}(\Sigma_{\theta_1}\times\cdots\times\Sigma_{\theta_d})$
defined by 
$\Phi_m^{\otimes d}(z_1,\ldots,z_d)=\Phi_m(z_1)\cdots\Phi_m(z_d)$.
Then the product $f\Phi_m^{\otimes d}$ belongs to 
$H_{0}^{\infty}(\Sigma_{\theta_1} \times \cdots\times \Sigma_{\theta_d})$
and 
$$
(f\Phi_m^{\otimes d})(A_1,\ldots,A_d)=f(A_1,\ldots,A_d)\Phi_m(A_1)\cdots
\Phi_m(A_d).
$$ 
Applying (\ref{Bdd-FC3}) to this function we obtain that
$$
\left\| f(A_1,\ldots,A_d)\Phi_m(A_1)\cdots
\Phi_m(A_d)\right\| \leq K \Bigl(
\prod_{k=1}^d\norm{\Phi_m}_{\infty,\Sigma_{\theta_k}}\Bigr)\,\norm{f}_{\infty,
\Sigma_{\theta_1} \times \cdots \times \Sigma_{\theta_d}}.
$$
The dense range assumption and (\ref{Approx}) ensure that 
$$
f(A_1,\ldots,A_d)\Phi_m(A_1)\cdots
\Phi_m(A_d)\,\longrightarrow f(A_1,\ldots,A_d)
$$
strongly, as $m\to\infty$. 
Moreover $(\Phi_m)_{m\geq 1}$ is uniformly bounded on all sectors
$\Sigma_\theta$. Hence the above inequality yields an estimate
$$
\left\| f(A_1,\ldots,A_d)\right\| \lesssim \norm{f}_{\infty,
\Sigma_{\theta_1} \times \cdots \times \Sigma_{\theta_d}}.
$$
\end{proof}

It follows from Definition \ref{Bdd-FC1} that if 
$(A_1,\ldots,A_d)$ admits an $H^\infty(\Sigma_{\theta_1} \times 
\cdots \times \Sigma_{\theta_d})$ joint 
functional calculus, then every subfamily $(A_k)_{k \in \Lambda}$, 
with $\emptyset\not=
\Lambda \subset \left\lbrace 1,\ldots,d \right\rbrace$,
admits an 
$H^\infty( \prod_{k \in \Lambda} \Sigma_{\theta_k})$ 
joint functional calculus. 
In particular, for every $k=1,\ldots,d$, the operator
$A_k$ admits an $H^\infty(\Sigma_{\theta_k})$ 
functional calculus. The converse does not hold true. 
Indeed it follows from \cite[Theorem 3.9]{LLLM} that for 
any $1\leq p\not=2\leq\infty$, 
there exists a  commuting
couple $(A_1,A_2)$ of sectorial operators on the Schatten space $S^p$ 
such that $A_k$ admits an $H^\infty(\Sigma_{\theta_k})$ 
functional calculus for any $\theta_k\in (0,\pi)$, $k=1,2$, 
but $(A_1,A_2)$ has no joint functional calculus (see Example \ref{Ex}
for more on this).
We proved however in \cite[Section 3]{ArLM} that if $X$ has property
$(\alpha)$ or if $X$ is a Banach lattice, then 
$(A_1,\ldots,A_d)$ admits an $H^\infty(\Sigma_{\theta_1} \times \cdots \times \Sigma_{\theta_d})$ joint functional calculus provided 
that for any $k=1,\ldots,d$, $A_k$ admits an $H^\infty(\Sigma_{\theta'_k})$ 
functional calculus for
some $\theta'_k<\theta_k$. We refer the reader 
to \cite{Pi} (see also \cite{LLLM}) for the definition and information on property 
$(\alpha)$, and also to \cite[Section 7.5]{BOOK} where
it is called ``Pisier's contraction property".

We recall that if 
$X$ is reflexive, then for any $k=1,\ldots,d$,
we have a direct sum decomposition
\begin{equation}\label{Ergodic1}
X=\overline{R(A_k)}\oplus N(A_k),
\end{equation}
see \cite[Section 10.1]{BOOK}. 
We call it ``ergodic decomposition" in the sequel.

For any $\Lambda\subset\{1,\ldots,d\}$, set 
\begin{equation}\label{X-Lambda}
X_\Lambda = \biggl(\bigcap_{k\in \Lambda} \overline{R(A_k)}\biggr)\bigcap
\biggl(\bigcap_{k\notin \Lambda} N(A_k)\biggr).
\end{equation}

\begin{lemma}\label{Ergodic2}
If $X$ is reflexive, we have a direct sum decomposition 
$$
X = \bigoplus_{\Lambda\subset\{1,\ldots,d\}} X_\Lambda.
$$
\end{lemma}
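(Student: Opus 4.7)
The plan is to construct commuting bounded projections $P_1,\ldots,P_d$ corresponding to the one-variable ergodic decompositions (\ref{Ergodic1}), and to use them to build projections $P_\Lambda$ whose ranges are exactly the subspaces $X_\Lambda$.

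First, for each $k=1,\ldots,d$, the decomposition (\ref{Ergodic1}) provides a bounded projection $P_k\in B(X)$ with $R(P_k)=\overline{R(A_k)}$ and $N(P_k)=N(A_k)$. I would identify $P_k$ as the strong limit of $\Phi_m(A_k)$ as $m\to\infty$. Indeed, on $\overline{R(A_k)}$ this limit is the identity by (\ref{Approx}), while on $N(A_k)$ the operator $\Phi_m(A_k)=m^2A_k(m+A_k)^{-1}(1+mA_k)^{-1}$ vanishes because of the factor $A_k$. Combining these two facts via (\ref{Ergodic1}) yields $\Phi_m(A_k)\to P_k$ strongly.

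Second, since the operators $A_1,\ldots,A_d$ commute (their resolvents commute by the definition of a commuting family), $\Phi_m(A_j)$ and $\Phi_n(A_k)$ commute for all $m,n\geq 1$ and $j,k$, hence so do their strong limits: $P_jP_k=P_kP_j$ for all $j,k$. I then define, for each $\Lambda\subset\{1,\ldots,d\}$,
$$
P_\Lambda\,=\,\prod_{k\in\Lambda}P_k\,\prod_{k\notin\Lambda}(I_X-P_k).
$$
Because the $P_k$ are commuting idempotents, each $P_\Lambda$ is a bounded projection; moreover $P_\Lambda P_{\Lambda'}=0$ whenever $\Lambda\neq\Lambda'$, and the partition-of-unity identity
$$
\sum_{\Lambda\subset\{1,\ldots,d\}}P_\Lambda\,=\,\prod_{k=1}^{d}\bigl(P_k+(I_X-P_k)\bigr)\,=\,I_X
$$
holds by expanding the product.

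Third, I would verify that $R(P_\Lambda)=X_\Lambda$. If $x\in R(P_\Lambda)$, then $P_kx=x$ for $k\in\Lambda$ and $(I_X-P_k)x=x$ for $k\notin\Lambda$, so $x\in\overline{R(A_k)}$ for $k\in\Lambda$ and $x\in N(A_k)$ for $k\notin\Lambda$, i.e. $x\in X_\Lambda$. Conversely, if $x\in X_\Lambda$, then $P_kx=x$ for $k\in\Lambda$ (as $x\in\overline{R(A_k)}=R(P_k)$) and $P_kx=0$ for $k\notin\Lambda$ (as $x\in N(A_k)=N(P_k)$), hence $P_\Lambda x=x$. Combining these two steps, every $x\in X$ admits the representation $x=\sum_{\Lambda}P_\Lambda x$ with $P_\Lambda x\in X_\Lambda$, and uniqueness follows from the orthogonality relations, giving the desired direct sum decomposition.

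The only potentially delicate point is the commutation of the $P_k$; this is where the reflexivity assumption is used (to guarantee (\ref{Ergodic1}) and the strong convergence of $\Phi_m(A_k)$), together with the commutation of the resolvents of the $A_k$. The remainder of the argument is purely algebraic manipulation with commuting idempotents.
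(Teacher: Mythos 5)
Your proof is correct, but it follows a genuinely different route from the paper. The paper argues by induction on $d$: it takes the ergodic decomposition $X=\overline{R(A_1)}\oplus N(A_1)$, observes that both summands are invariant under $A_2$ (via the commuting resolvents), restricts $A_2$ to each summand, notes that these restrictions are again sectorial on reflexive subspaces and identifies the closure of the range and the kernel of each restriction, then glues the resulting decompositions and iterates. You instead work globally: you realize each ergodic projection $P_k$ as the strong limit of $\Phi_m(A_k)$ (identity on $\overline{R(A_k)}$ by (\ref{Approx}), zero on $N(A_k)$ because of the factor $z$ in (\ref{Phi_m})), deduce that the $P_k$ commute since the $\Phi_m(A_k)$ are built from commuting resolvents, and then expand $I_X=\prod_k(P_k+(I_X-P_k))$ into the mutually orthogonal projections $P_\Lambda$ with range $X_\Lambda$. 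Your approach buys explicit formulas for the projections onto the $X_\Lambda$ and avoids both the induction and the need to verify that restrictions of sectorial operators to invariant subspaces are sectorial with the expected ranges and kernels; its small extra cost is the identification of $P_k$ as a strong limit of the approximants $\Phi_m(A_k)$ (including the agreement of the $H^\infty_0$-calculus definition of $\Phi_m(A_k)$ with the rational expression), together with the routine check that strong limits of commuting bounded operators commute, both of which are standard and which you handle correctly. Either argument uses reflexivity only through (\ref{Ergodic1}).
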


\begin{proof}
We start from the decomposition $X=\overline{R(A_1)}\oplus N(A_1)$
given by (\ref{Ergodic1}).
Since $A_2$ commutes with $A_1$, the subspaces $\overline{R(A_1)}$
and $N(A_1)$ are both $A_2$-invariant. Thus we have restrictions
$$
A_{2,1}\colon D(A_2)\cap \overline{R(A_1)}\longrightarrow
\overline{R(A_1)}
\qquad\hbox{and}\qquad 
A_{2,2}\colon D(A_2)\cap N(A_1)\longrightarrow
N(A_1)
$$
of $A_2$, which 
are sectorial operators. Since $\overline{R(A_1)}$
and $N(A_1)$ are reflexive, $A_{2,1}$ and $A_{2,2}$ admit 
ergodic decompositions. Since 
$$
\overline{R(A_{2,1})} = \overline{R(A_2})\cap \overline{R(A_1)}
\qquad\hbox{and}\qquad 
N(A_{2,1})=N(A_2)\cap \overline{R(A_1)},
$$
we obtain that 
$$
\overline{R(A_1)}=\bigl(N(A_2)\cap \overline{R(A_1)}\bigr)
\oplus\bigl(\overline{R(A_2})\cap \overline{R(A_1)}\bigr).
$$
Likewise, 
$$
N(A_1)=\bigl(N(A_2)\cap N(A_1)\bigr)
\oplus\bigl(\overline{R(A_2})\cap N(A_1)\bigr).
$$
Gluing these two ergodic decompositions together, 
we obtain the result in the case $d=2$.
The general case follows by induction.
\end{proof}

\bigskip
We now turn to some background on Rademacher/Gaussian averages
and the key notion of $R$-boundedness.

Let $I$ be a countable set and let 
$(r_j)_{j \in I}$ be an independent family of Rademacher 
variables on some probability 
space $(\S,\mathbb{P})$. For any finitely supported family 
$(x_j)_{j\in I}$  in
$X$, we set
$$
\Bignorm{\sum_{j\in I} r_j \otimes x_j}_{\text{Rad}(X)}
\,=\,
\biggl(\int_{\S}\Bignorm{\sum_{j\in I} r_j(t) 
\, x_j}_X^2 d\mathbb{P}(t) \biggr)^\frac{1}{2}.
$$
This is the norm of the (finite) sum
$\sum_{j\in I} r_j \otimes x_j$ in the
Bochner space $L^2(\S;X)$.

Consider the usual case $I=\Ndb^*$.
We say that a subset $\F\subset B(X)$ is $R$-bounded if there
exists a constant $K\geq 0$ such that 
$$
\Bignorm{\sum_{j\geq 1} r_j \otimes T_j(x_j)}_{\text{Rad}(X)}
\,\leq K
\Bignorm{\sum_{j\geq 1} r_j \otimes x_j}_{\text{Rad}(X)}
$$
for any finitely supported sequences $(T_j)_{j\geq 1}$ in $\F$ and  
$(x_j)_{j\geq 1}$ in $X$. In this case, we let $\R(\F)$ be the smallest 
possible $K\geq 0$ satisfying this property; this is called 
the $R$-bounded constant of
$\F$.

Next if $A$ is a sectorial operator on $X$, we say that $A$ is  
$R$-sectorial of $R$-type $\omega \in (0,\pi)$
if $\sigma(A) \subset \overline{\Sigma_{\omega}}$ 
and for any $\theta$ in $(\omega,\pi)$, the set
$$
\bigl\{zR(z,A) \, :\, z \in \mathbb{C} 
\setminus \overline{\Sigma_{\theta}}\bigr\}\,\subset\, B(X)
$$
is $R$-bounded. We refer e.g. to \cite[Chapter 8]{BOOK} for information
and references.

There are similar definitions with Gaussian variables
replacing the Rademacher variables, as follows. 
Again let $I$ be a countable set and let 
$(g_j)_{j \in I}$ be a independent family of complex valued
standard Gaussian variables on 
$(\S,\mathbb{P})$. Then for any finitely supported family 
$(x_j)_{j\in I}$  in
$X$, we let
$$
\Bignorm{\sum_{j\in I} g_j \otimes x_j}_{G(X)}
\,=\,
\biggl(\int_{\S}\Bignorm{\sum_{j\in I} g_j(t) 
\, x_j}_X^2 d\mathbb{P}(t) \biggr)^\frac{1}{2}.
$$

We will use the classical notions of type and cotype, 
for which we refer
to \cite[Chapter 11]{DJT} (see also \cite{M}). We recall 
that a Banach lattice has finite cotype if and only if 
it has property $(\alpha)$; this class includes $L^p$-spaces
for any $1\leq p<\infty$. Further any UMD Banach space (see \cite{Bu})
has finite cotype; this class includes non commutative $L^p$-spaces
for any $1<p<\infty$. Lastly, finite cotype passes to subspaces.

When $X$ has finite cotype, then we have an equivalence
\begin{equation}\label{Equiv}
\Bignorm{\sum_{j\in I} g_j \otimes x_j}_{G(X)}\,\approx
\Bignorm{\sum_{j\in I} r_j \otimes x_j}_{\text{Rad}(X)},
\end{equation}
for finitely supported families
$(x_j)_{j\in I}$  in
$X$. We refer e.g. to \cite[Corollary 7.2.10]{BOOK}
for this result. Despite
this property, we will need to use both 
Gaussian and Rademacher averages, even on Banach
spaces with finite cotype.
The main advantage of Rademacher 
variables is their uniform boundedness, whereas 
the main advantage of Gaussian averages if the
following invariance property (see e.g. \cite[Proposition 6.1.23]{BOOK}):
for any
matrix $[a_{ij}]\in M_n(\Cdb)$ and for any
$x_1,\ldots, x_n$ in $X$, 
\begin{equation}\label{Rot}
\Bignorm{\sum_{i,j=1}^n a_{ij} g_i \otimes x_j}_{G(X)}
\,\leq\,\norm{[a_{ij}]}_{B(\ell^2_n)}\,
\Bignorm{\sum_{j=1}^n g_j \otimes x_j}_{G(X)}.
\end{equation}

\section{Angle reduction}\label{Angle}

Let $(A_1,\ldots,A_d)$ be a $d$-tuple of commuting sectorial 
operators on some Banach space $X$. It is plain to see that 
if  $(A_1,\ldots,A_d)$ admits an
$H^\infty(\Sigma_{\theta_1}\times\cdots
\times\Sigma_{\theta_d})$ joint functional
calculus, then it also admits an
$H^\infty(\Sigma_{\theta'_1}\times\cdots
\times\Sigma_{\theta'_d})$ joint functional
calculus whenever $\theta'_k\geq \theta_k$, $k=1,\ldots,d$.
The purpose of this section
is to prove the following theorem, 
which allows to consider the case when 
$\theta'_k < \theta_k$.

\begin{theorem}\label{d-KW}
Let $X$ be a Banach space and
let $(A_1,\ldots,A_d)$ be a $d$-tuple of commuting $R$-sectorial 
operators on $X$, of respective $R$-types
$\omega_1, \ldots,\omega_d$. Let $\theta_k\in(\omega_k,\pi)$, for $k=1,\ldots, d$,
and assume that $(A_1,\ldots,A_d)$ admits an
$H^\infty(\Sigma_{\theta_1}\times\cdots\times\Sigma_{\theta_d})$ joint functional
calculus. Then for any $\theta'_k\in(\omega_k,\pi)$, $k=1,\ldots, d$, the
family $(A_1,\ldots,A_d)$ admits an
$H^\infty(\Sigma_{\theta'_1}\times\cdots\times\Sigma_{\theta'_d})$ joint functional
calculus. 
\end{theorem}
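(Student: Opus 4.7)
The plan is to adapt the Franks--McIntosh discrete decomposition of $H^\infty$ functions on sectors (\cite{FMI}) to the $d$-variable setting. By monotonicity, it suffices to treat the case $\theta'_k\in(\omega_k,\theta_k)$ for every $k$. Using the direct-sum decomposition (\ref{Hinfini01}), it is enough to establish
$$\bignorm{f(A_1,\ldots,A_d)}\lesssim\norm{f}_{\infty,\Sigma_{\theta'_1}\times\cdots\times\Sigma_{\theta'_d}}$$
for $f$ in each subspace $H^\infty_0\bigl(\prod_{k\in\Lambda}\Sigma_{\theta'_k}\bigr)$, and by symmetry one may assume $\Lambda=\{1,\ldots,d\}$. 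I would then fix intermediate angles $\omega_k<\nu_k<\theta'_k$ and work with the resolvent integral (\ref{fAi}) along $\prod_k\partial\Sigma_{\nu_k}$.

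The Franks--McIntosh decomposition provides, for each $k$, a family of atomic functions $(\phi_n^{(k)})_{n\in\Zdb}$, roughly of the form $\phi_n^{(k)}(z)=z^N a_n^N/(z+a_n)^{2N}$ for dyadic points $a_n\in\partial\Sigma_{\nu_k}$ and a fixed integer $N$, such that every $g\in H^\infty_0(\Sigma_{\theta'_k})$ admits a representation $g(z)=\sum_n c_n(g)\phi_n^{(k)}(z)$ with $\sup_n\vert c_n(g)\vert\lesssim\norm{g}_\infty$. The crucial feature is that each rational atom $\phi_n^{(k)}$ is holomorphic well beyond $\Sigma_{\theta'_k}$; by choosing the $\nu_k$ close enough to $\omega_k$ (and, if necessary, splitting the decomposition along the upper and lower boundary of each sector when $\omega_k+\theta_k\geq\pi$), one may arrange uniformly in $n$ that $\phi_n^{(k)}\in H^\infty_0(\Sigma_{\theta_k})$. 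Iterating the decomposition across the $d$ variables then gives
$$f(z_1,\ldots,z_d)=\sum_{\mathbf{n}\in\Zdb^d}c_{\mathbf{n}}(f)\prod_{k=1}^d\phi_{n_k}^{(k)}(z_k),$$
with $\sup_{\mathbf{n}}\vert c_{\mathbf{n}}(f)\vert\lesssim\norm{f}_\infty$, and correspondingly, via absolute convergence and Fubini in (\ref{fAi}), the operator identity
$$f(A_1,\ldots,A_d)=\sum_{\mathbf{n}\in\Zdb^d}c_{\mathbf{n}}(f)\prod_{k=1}^d\phi_{n_k}^{(k)}(A_k).$$

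The main obstacle is to sum this multiple series. Each tensor product $\prod_k\phi_{n_k}^{(k)}(A_k)$ is individually controlled by the joint $H^\infty(\Sigma_{\theta_1}\times\cdots\times\Sigma_{\theta_d})$ calculus, since each atom lies in the corresponding large one-variable sector, but summability is not automatic. To secure it, I would invoke the $R$-sectoriality of each $A_k$: writing $\phi_n^{(k)}(A_k)$ as an absolutely convergent contour integral of resolvents shows that the family $\{\phi_n^{(k)}(A_k):n\in\Zdb\}$ is $R$-bounded. Combining this $R$-boundedness in each of the $d$ variables with the joint functional calculus on the large sector, through a Rademacher/Khintchine--Kahane averaging argument, should then yield $\norm{f(A_1,\ldots,A_d)}\lesssim\norm{f}_\infty$. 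The principal technical difficulty is precisely this last synthesis: one must iterate $R$-boundedness across the $d$ commuting variables so that each sum over $n_k$ contributes a controlled constant, and it is here that the full joint (rather than merely separate) $H^\infty$ calculus on the larger sectors $\Sigma_{\theta_k}$ is essential. In the case $d=1$, the strategy reduces to a new proof of Kalton--Weis's single-variable angle reduction result.
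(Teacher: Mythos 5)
There is a genuine gap, and it occurs exactly where you locate ``the principal technical difficulty'' --- but also one step earlier, in the direction in which you use the Franks--McIntosh decomposition. In Lemma \ref{FM-1} the coefficients of the decomposition of $f$ are controlled by $\norm{f}_{\infty,\Sigma_\gamma}$ where $\gamma$ is \emph{larger} than the angle $\mu$ on which the atoms are bounded and on which the representation holds; the atoms carry singularities tied to $\gamma$. Your plan needs the opposite: atoms lying in $H^\infty_0(\Sigma_{\theta_k})$ for the \emph{large} angles (so that the given joint calculus applies to them) together with coefficients $c_{\mathbf n}(f)$ dominated by $\norm{f}_{\infty,\Sigma_{\theta'_1}\times\cdots\times\Sigma_{\theta'_d}}$, the norm on the \emph{small} polysector. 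Franks--McIntosh does not provide this: to bound the coefficients by the small-sector norm you must take the decomposition parameter $\gamma\leq\theta'_k$, and then the atoms are not bounded on $\Sigma_{\theta_k}$ (indeed $f\in H^\infty_0$ of the small sectors need not even extend boundedly to the large ones, so a representation with fixed large-sector atoms and uniformly bounded coefficients cannot be extracted from the norm $\norm{f}_{\infty,\text{small}}$ by this route). The paper sidesteps this by decomposing the \emph{constant function} $1=\sum_i\Delta_i\psi_i\widetilde\psi_i$ on a sector $\Sigma_\mu$ with $\mu$ exceeding all $\theta_k$ and $\theta'_k$ (Proposition \ref{FM-3}); $f$ then enters only through the products $f(\A)\Delta_{i_1,\ldots,i_d}(\A)$, which are written as absolutely convergent resolvent integrals over $\prod_k\partial\Sigma_{\nu_k}$ with $\nu_k<\theta'_k$, so that $R$-sectoriality yields $R$-boundedness of this family with constant $\lesssim\norm{f}_{\infty,\text{small}}$.

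The second gap is the summation itself. Even granting your decomposition, the coefficients $c_{\mathbf n}(f)$ are only bounded, not summable, and each variable contributes a single atom; there is then no mechanism to sum $\sum_{\mathbf n}c_{\mathbf n}(f)\prod_k\phi^{(k)}_{n_k}(A_k)$. $R$-boundedness of $\{\phi^{(k)}_n(A_k):n\in\Zdb\}$ lets you pull operators out of randomized sums, but the remaining expressions $\bignorm{\sum_{\mathbf n}\varepsilon_{\mathbf n}\otimes(\cdots)x}_{\Rad(X)}$ are not controlled without additional structure (for $N^d$ terms applied to a single $x$ they can grow like $N^{d/2}$). What makes the argument close in the paper is that each atom is split into a product of \emph{two} factors with pointwise absolutely summable moduli (the inner--outer factorization producing $\psi_i$ and $\widetilde\psi_i$ in Proposition \ref{FM-3}); one factor is applied to $x$, the other, via duality, to $y\in X^*$, and the pointwise $\ell^1$ bounds show that all $\pm1$ sign-combinations of the $\psi_{i_1,\ldots,i_d}$ (resp.\ $\widetilde\psi_{i_1,\ldots,i_d}$) are uniformly bounded in $H^\infty$ of the product of the large sectors, so the assumed joint calculus gives uniform Rademacher bounds on both sides; Cauchy--Schwarz then absorbs the multiple sum. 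Without this two-factor (square-function) splitting and the accompanying duality, the ``Rademacher/Khintchine--Kahane averaging argument'' you invoke does not exist as stated, so the proposal as written does not yield the theorem.
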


This theorem is a multivariable version of a widely used 
result due to Kalton-Weis \cite[Proposition 5.1]{KW1}.
Its proof, given at the end of this section, will rely on  a decomposition principle 
for analytic functions on sectors,
going back to Franks and McIntosh \cite{FMI}, and from
ideas in the latter paper. 
In passing, this proof
provides a new one of \cite[Proposition 5.1]{KW1}.

In the next two lemmas, we fix two angles $0<\mu<\gamma<\pi$. 
The following can be extracted from \cite[Section 3]{FMI}.

\begin{lemma}\label{FM-1} 
There exist constants $\rho>1$ and $C>0$, and a family
$(\phi_{m,k,j})_{\substack{m=1,2; \\
k\in\footnotesize{\Zdb}; \,j\geq  0}}$
in $H^{\infty}_0(\Sigma_\mu)$ such that:
\begin{itemize}
\item [(1)] For any $n\in\Zdb$ and for any 
$z\in\Sigma_\mu$ satisfying
$\rho^n\leq\vert z\vert\leq \rho^{n+1}$, we have
$$
\bigl\vert \phi_{m,k,j}(z)\bigr\vert\,\leq\,
C\, 2^{-j}\,\rho^{-\frac12\vert k-n\vert},\qquad
m\in\{1,2\},\, k\in \Zdb,\, j\geq 0.
$$
\item [(2)] For any $f\in H^\infty(\Sigma_\gamma)$, there exists 
a family $(\alpha_{m,k,j})_{\substack{m=1,2; \\
k\in\footnotesize{\Zdb}; \, j\geq  0}}$
of complex numbers such that 
$$
\vert \alpha_{m,k,j}\vert\,\leq\,C\,\norm{f}_{\infty,\Sigma_\gamma}
,\qquad
m\in\{1,2\},\, k\in \Zdb,\, j\geq 0,
$$
and 
$$
f(z)=\,\sum_{\substack{m=1,2; \\
k\in\footnotesize{\Zdb};\, j\geq  0}}
\alpha_{m,k,j}\,\phi_{m,k,j}(z)\,,
\qquad z\in \Sigma_\mu.
$$
\item [(3)] For any $m\in\{1,2\},\, k\in \Zdb,\, j\geq 0,$ the function
$$
z\mapsto \phi_{m,k,j}(z)\Bigl(\frac{1+z^2}{z}\Bigr)^\frac12
$$
is bounded on $\Sigma_\mu$.
\end{itemize}
\end{lemma}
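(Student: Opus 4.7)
The plan is to build the atoms $\phi_{m,k,j}$ from a Cauchy integral representation of $f$ on the boundary of an intermediate sector, followed by two layers of discretization. Fix $\nu \in (\mu,\gamma)$ and $\rho > 1$ chosen small enough that the rays $\pm r e^{\pm i\nu}$ stay at positive distance from $\overline{\Sigma_\mu}$, and start from
\begin{equation*}
f(z) \,=\, \frac{1}{2\pi i}\int_{\partial \Sigma_\nu}\frac{f(w)}{w-z}\,dw, \qquad z\in\Sigma_\mu.
\end{equation*}
Splitting $\partial\Sigma_\nu$ into its two rays produces the index $m\in\{1,2\}$, and splitting each ray into geometric arcs $\{w=\varepsilon_m r e^{\varepsilon_m i\nu}:r\in[\rho^k,\rho^{k+1}]\}$ (with $\varepsilon_1,\varepsilon_2=\pm 1$) produces the index $k\in\Zdb$. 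The third index $j$ arises from orthogonally expanding, on each arc (rescaled to $[1,\rho]$), the bounded trace $r\mapsto f(\varepsilon_m r e^{\varepsilon_m i\nu})$ in an orthonormal family $(e_j)_{j\geq 0}$ of analytic functions on $[1,\rho]$. Substituting this back into the Cauchy integral produces explicit atoms
\begin{equation*}
\phi_{m,k,j}(z) \,=\, \frac{\varepsilon_m e^{\varepsilon_m i\nu}}{2\pi i}\int_{\rho^k}^{\rho^{k+1}}\frac{e_j(r/\rho^k)}{\varepsilon_m r e^{\varepsilon_m i\nu}-z}\,dr,\qquad z\in\Sigma_\mu,
\end{equation*}
with coefficients $\alpha_{m,k,j}$ uniformly bounded by $C\|f\|_{\infty,\Sigma_\gamma}$, which gives the coefficient estimate in (2).

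The basis $(e_j)$ must be chosen under three simultaneous constraints: its dual system has to be uniformly bounded (to secure the estimate on $\alpha_{m,k,j}$); the $e_j$ themselves must extend holomorphically and with uniform bounds to a fixed complex strip around $[1,\rho]$ (to produce the $2^{-j}$ decay in (1)); and they must be orthogonal to a fixed low-dimensional space of ``monomials in $1/r$'' (to force $\phi_{m,k,j}$ to vanish at $z=0$ and thereby land in $H^\infty_0(\Sigma_\mu)$). With such a basis in hand, estimate (1) splits into two independent pieces. The scale factor $\rho^{-|k-n|/2}$ is a resolvent computation: for $\rho^n\leq|z|\leq\rho^{n+1}$ and $w$ on the $k$-th arc, $|w-z|\geq c\max(\rho^k,\rho^n)$, and combined with the moment-vanishing this yields even $\rho^{-|k-n|}$ decay, one half of which is retained for (1) and the other half absorbed into the summability that makes (2) meaningful. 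The exponential factor $2^{-j}$ is extracted by shifting the contour of $r$-integration in the definition of $\phi_{m,k,j}$ into the holomorphy strip of $e_j$: inside that strip $|e_j|$ is uniformly small, and the shift converts this into geometric decay in $j$.

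The main obstacle is precisely the coordinated choice of $\rho$, $\nu$ and $(e_j)$ so that the contour shift used for the $2^{-j}$ decay remains in a region where the resolvent kernel $(w-z)^{-1}$ stays uniformly controlled for $z\in\Sigma_\mu$ — this coupling of the analytic-extension domain of $(e_j)$ with the sector geometry is the technical heart of Franks--McIntosh and the only genuinely non-routine step. Once (1) is established, property (3) follows by direct computation on the integral formula using the imposed vanishing moments (compactness of the $r$-range yields the decay at infinity, and the moment condition the vanishing at the origin), while property (2) reduces to an application of Fubini: the absolute convergence of $\sum_{m,k,j}\alpha_{m,k,j}\phi_{m,k,j}(z)$ on compact subsets of $\Sigma_\mu$ guaranteed by (1) allows one to collapse the triple sum back into the original Cauchy integral arc by arc.
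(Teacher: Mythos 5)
Your plan captures the overall architecture of the Franks--McIntosh construction that the paper cites (a contour representation on an intermediate boundary $\partial\Sigma_\nu$, with $m$ indexing the two rays, $k$ the geometric scale, and $j$ an orthonormal expansion on each rescaled arc), but there is a genuine gap at the very first line. For a general $f\in H^\infty(\Sigma_\gamma)$ the plain Cauchy integral $\frac{1}{2\pi i}\int_{\partial \Sigma_\nu}\frac{f(w)}{w-z}\,dw$ does not converge: on each ray the integrand decays only like $|w|^{-1}$ at infinity, producing a logarithmic divergence. The workaround you propose -- imposing vanishing moments on the $(e_j)$ so that the per-scale contributions decay and the atoms vanish at the origin -- cannot coexist with the completeness that (2) requires: if every $e_j$ is orthogonal to $1/r$ in order to force $\phi_{m,k,j}(0)=0$, then $1/r$ escapes the span of the $(e_j)$, and the resolution of the boundary trace $r\mapsto f(\varepsilon_m r e^{\varepsilon_m i\nu})$ on each arc is no longer exact. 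You would be left either with an uncontrolled residual family of atoms that are not in $H^\infty_0(\Sigma_\mu)$, or with a reconstruction that misses part of $f$.

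The construction in \cite[Section 3]{FMI}, to which the paper defers, avoids both problems by modifying the kernel rather than the basis: one works with a reproducing kernel of the shape $z^{1/2}/\bigl(w^{1/2}(w-z)\bigr)$. This kernel decays like $|w|^{-3/2}$ at infinity and is integrable near $w=0$ on $\partial\Sigma_\nu$, so the representation of $f$ is absolutely convergent for every bounded holomorphic $f$ with no moment cancellation. It also builds the factor $z^{1/2}$ directly into each atom, giving $\phi_{m,k,j}(z)\sim z^{1/2}$ near $0$ and $\sim z^{-1/2}$ near $\infty$; this is exactly what part (3) encodes, since $\bigl|\phi_{m,k,j}(z)\bigl(\tfrac{1+z^2}{z}\bigr)^{1/2}\bigr|\leq C$ expresses precisely $z^{\pm 1/2}$ behavior and is not, contrary to your reading, a consequence of a vanishing moment. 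Once the kernel is corrected, the $(e_j)$ may be taken to be a genuine orthonormal basis on $[1,\rho]$ with a holomorphic extension, the coefficient bound of (2) follows from orthonormality, the $\rho^{-|k-n|/2}$ factor in (1) follows from the $\max(\rho^k,\rho^n)$ resolvent estimate you wrote down applied to the modified kernel, and the $2^{-j}$ factor is indeed the contour-shift phenomenon you describe. So the skeleton of your argument is the right one, but with the plain Cauchy kernel the decomposition you set out to build does not exist, and the moment-condition patch does not fix it.
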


Let $\rho>1$ be given by Lemma \ref{FM-1}.
For any $k\in\Zdb$, we define $\sigma_k\in H^\infty(\Sigma_\mu)$ by 
\begin{equation}\label{sigmak}
\sigma_k(z)=\,\frac{\rho^{\frac{k}{4}}\, z^{\frac{1}{4}}}{
(\rho^ke^{i\gamma})^\frac12 \,-\, z^{\frac12}}\,,
\qquad z\in\Sigma_{\mu}.
\end{equation}

\begin{lemma}\label{FM-2}
There exist two constants $0<c_1<c_2$ such that for any
$k,n\in\Zdb$ and for any $z\in\Sigma_\mu$ satisfying
$\rho^n\leq\vert z\vert\leq \rho^{n+1}$, we have
$$
c_1\rho^{-\frac14\vert k-n\vert}\,\leq\vert\sigma_k(z)\vert\leq
c_2\rho^{-\frac14\vert k-n\vert}\,.
$$
\end{lemma}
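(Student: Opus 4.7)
The plan is to reduce everything to a geometric estimate in the $w=z^{1/2}$ plane. Set $w = z^{1/2}$ (principal branch) and $a = (\rho^k e^{i\gamma})^{1/2}$, so that $|a| = \rho^{k/2}$, $\arg(a) = \gamma/2$, $|w| = |z|^{1/2}$ and $|\arg(w)| < \mu/2$. Then a direct computation gives
$$
\vert\sigma_k(z)\vert \,=\,\frac{\rho^{k/4}\vert z\vert^{1/4}}{\vert a - w\vert}\,=\,\frac{|a|^{1/2}\,|w|^{1/2}}{|a-w|}.
$$

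The key geometric input is that $a$ and $w$ lie in sectors whose arguments are uniformly separated: since $\mu<\gamma$, the angle $\arg(a)-\arg(w)$ stays in the compact interval $\bigl[(\gamma-\mu)/2,(\gamma+\mu)/2\bigr]\subset (0,\pi)$. Using the law of cosines with $c=\cos((\gamma-\mu)/2)<1$, I would show
$$
|a-w|^2 \,\geq\, |a|^2+|w|^2-2c|a||w| \,\geq\, (1-c)\bigl(|a|^2+|w|^2\bigr),
$$
which combined with the trivial bound $|a-w|\leq |a|+|w|$ yields
$$
|a-w|\,\approx\,|a|+|w|,
$$
with constants depending only on $\gamma-\mu$. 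Substituting back gives
$$
\vert\sigma_k(z)\vert \,\approx\,\frac{|a|^{1/2}|w|^{1/2}}{|a|+|w|}.
$$

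To finish, I would set $s = |w|/|a|$, so that $\vert\sigma_k(z)\vert \approx s^{1/2}/(1+s)$. The hypothesis $\rho^n\leq |z|\leq\rho^{n+1}$ together with $|a|=\rho^{k/2}$ gives $\rho^{(n-k)/2}\leq s\leq \rho^{(n-k+1)/2}$. Splitting into the cases $n\geq k$ (where $s\geq 1$ and $s^{1/2}/(1+s)\approx s^{-1/2}$) and $n<k$ (where $s^{1/2}/(1+s)\approx s^{1/2}$, up to multiplication by a bounded factor coming from $s\leq \rho^{1/2}$), I obtain in both cases two-sided bounds of the form $c_1\rho^{-|k-n|/4}\leq \vert\sigma_k(z)\vert\leq c_2\rho^{-|k-n|/4}$ for constants $0<c_1<c_2$ depending only on $\rho$, $\gamma$ and $\mu$.

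There is no real obstacle here; the only delicate point is the uniform angular separation argument, which crucially uses the strict inequality $\mu<\gamma$ (and $\gamma<\pi$) to keep $\arg(a)-\arg(w)$ away from $0$ and $\pi$. Once that is in hand, the rest is a bookkeeping of the two regimes $|w|\gtrsim|a|$ and $|w|\lesssim|a|$.
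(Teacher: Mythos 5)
Your proof is correct and follows essentially the same route as the paper: both estimate $\bigl|(\rho^k e^{i\gamma})^{1/2}-z^{1/2}\bigr|$ from above by the triangle inequality and from below by the uniform angular separation coming from $\mu<\gamma$, yielding $|\sigma_k(z)|\approx \rho^{k/4}\rho^{n/4}\rho^{-\frac12\max\{k,n\}}=\rho^{-\frac14|k-n|}$. The only difference is that you spell out, via the law of cosines, the lower bound $|a-w|\gtrsim |a|+|w|$ which the paper merely asserts with an unspecified constant $\delta$.
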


\begin{proof}
Consider $z\in\Sigma_\mu$ satisfying
$\rho^n\leq\vert z\vert\leq \rho^{n+1}$.
On the one hand, we have
$$
\bigl\vert (\rho^ke^{i\gamma})^\frac12 \,-\, z^{\frac12}\bigr\vert
\leq \,\rho^{\frac{k}{2}}\, +\, \vert z\vert^\frac12
\,\leq \,(1+\rho^\frac12)\,
\rho^{\frac{1}{2}\max\{k,n\}}\,.
$$
On the other hand, 
we observe that for some $\delta>0$ (only depending 
on $\mu,\gamma$ and $\rho$), the distance
$\bigl\vert (\rho^ke^{i\gamma})^\frac12 \,-\, z^{\frac12}\bigr\vert\,$
is bounded from below by either 
$\delta\,\rho^{\frac{n}{2}}\,$ if $k\leq n$, or
$\delta\,\rho^{\frac{k}{2}}\,$ if $k\geq n$.
Thus we have
$$
\bigl\vert (\rho^ke^{i\gamma})^\frac12 \,-\, z^{\frac12}\bigr\vert
\geq \delta\,\rho^{\frac{1}{2}\max\{k,n\}}\,.
$$
These estimates imply 
$$
\vert\sigma_k(z)\vert \, \approx\, \frac{\rho^{\frac{k}{4}}\,
\vert z\vert^{\frac{1}{4}}}{\rho^{\frac{1}{2}\max\{k,n\}}}\,
\approx\,\rho^{\frac{k}{4}}\,\rho^{\frac{n}{4}}\,
\rho^{-\frac{1}{2}\max\{k,n\}}\,=\,\rho^{-\frac14\vert k-n\vert},
$$
under the condition $\rho^n\leq\vert z\vert\leq \rho^{n+1}$. This 
yields the result.
\end{proof}

The following decomposition of the unit (of independent interest)
will be derived from the above two lemmas.

\begin{proposition}\label{FM-3}
Let $\mu\in(0,\pi)$. There exist three sequences $(\Delta_i)_{i\geq 1}$,
$(\psi_i)_{i\geq 1}$ and $(\widetilde{\psi}_i)_{i\geq 1}$ 
in $H^\infty_0(\Sigma_\mu)$
satisfying the following properties.
\begin{itemize}
\item [(1)] There exists a constant $C\geq 0$ such that
$$
\forall\, z\in\Sigma_\mu,\qquad \sum_{i=1}^\infty \vert \psi_i(z)\vert\,\leq C
\qquad\hbox{and}\qquad
\sum_{i=1}^\infty \vert \widetilde{\psi}_i(z)\vert\,\leq C.
$$
\item [(2)] For any $\nu\in(0,\mu)$, 
there exists a constant $K\geq 0$ such that
$$
\forall\, i\geq 1,\qquad \int_{\partial\Sigma_\nu} 
\vert\Delta_i(z)\vert\, 
\Bigl\vert\frac{dz}{z}\Bigr\vert \ \leq K.
$$
\item [(3)] There exists a constant $C\geq 0$ such that
$$
\forall\, i\geq 1,\ \forall\, z\in\Sigma_\mu,\qquad 
\vert\Delta_i(z)\vert\,\leq C,
$$
and
$$
\forall\, z\in\Sigma_\mu,\qquad 1 = \sum_{i=1}^\infty \Delta_i(z)
\psi_i(z) \widetilde{\psi}_i(z).
$$
\end{itemize}
\end{proposition}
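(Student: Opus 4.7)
The strategy is to apply Lemma \ref{FM-1}(2) to the constant function $f\equiv 1 \in H^\infty(\Sigma_\gamma)$ for some fixed $\gamma \in (\mu,\pi)$. This yields uniformly bounded scalars $(\alpha_{m,k,j})_{m\in\{1,2\},k\in\Zdb,j\geq 0}$ satisfying
$$
1 = \sum_{m,k,j}\alpha_{m,k,j}\phi_{m,k,j}(z),\qquad z\in\Sigma_\mu,
$$
the sum being absolutely convergent thanks to Lemma \ref{FM-1}(1). After re-indexing the triples $(m,k,j)$ by a single index $i\geq 1$, the problem reduces to factoring each summand $\alpha_{m,k,j}\phi_{m,k,j}$ as a product $\Delta_i\psi_i\widetilde{\psi}_i$ of three $H^\infty_0(\Sigma_\mu)$-functions meeting the three prescribed properties.

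Since $\sigma_k$ defined in (\ref{sigmak}) is holomorphic and non-vanishing on the simply connected sector $\Sigma_\mu$, one may fix a holomorphic branch of $\sigma_k^{1/2}$ for each $k\in\Zdb$. I would set
$$
\psi_{m,k,j}(z) := 2^{-j/2}\sigma_k(z)^{1/2},\quad \widetilde{\psi}_{m,k,j}(z) := 2^{-j/2}\sigma_k(z)^{1/2},\quad \Delta_{m,k,j}(z) := \frac{2^j\alpha_{m,k,j}\phi_{m,k,j}(z)}{\sigma_k(z)},
$$
so that $\Delta_i\psi_i\widetilde{\psi}_i = \alpha_{m,k,j}\phi_{m,k,j}$ by construction, and summation delivers the identity in (3). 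For property (1), the upper bound $|\sigma_k(z)|^{1/2}\lesssim \rho^{-|k-n|/8}$ from Lemma \ref{FM-2} (where $n$ is determined by $\rho^n\leq|z|\leq\rho^{n+1}$) combined with the geometric factor $2^{-j/2}$ gives $\sum_i|\psi_i(z)|\lesssim\bigl(\sum_{j\geq 0}2^{-j/2}\bigr)\bigl(\sum_{k\in\Zdb}\rho^{-|k-n|/8}\bigr)$, uniformly bounded in $z\in\Sigma_\mu$. For the boundedness in (3), combining Lemma \ref{FM-1}(1) with the lower bound $|\sigma_k(z)|\geq c_1\rho^{-|k-n|/4}$ of Lemma \ref{FM-2} yields $|\Delta_i(z)|\lesssim \rho^{-|k-n|/4}$ on the annulus $\rho^n\leq|z|\leq\rho^{n+1}$, hence $|\Delta_i|\leq C$ uniformly. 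Integrating this annular estimate against $|dz/z|$ over $\partial\Sigma_\nu$ then yields property (2), since $\sum_n\rho^{-|k-n|/4}$ is a uniform constant and each annulus contributes $O(\log\rho)$ in $|dz/z|$-measure. Finally, the $H^\infty_0(\Sigma_\mu)$ decay of each $\psi_i,\widetilde{\psi}_i,\Delta_i$ at $0$ and $\infty$ is verified using Lemma \ref{FM-1}(3) (giving $|\phi_{m,k,j}(z)|\lesssim|z/(1+z^2)|^{1/2}$) together with the explicit asymptotics of $\sigma_k$ at $0$ and $\infty$.

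The main obstacle is choosing the correct exponent of $\sigma_k$ in the factorization. The natural symmetric choice $\psi_i=\widetilde{\psi}_i=2^{-j/2}\sigma_k$ (with $\Delta_i=2^j\alpha_{m,k,j}\phi_{m,k,j}/\sigma_k^2$) would satisfy properties (1) and (3) but fail property (2): the $\rho^{-|k-n|/2}$ decay of $|\phi_{m,k,j}|$ and the matching lower bound on $|\sigma_k|^2$ cancel exactly, leaving $\Delta_i$ bounded but without any decay in $|k-n|$, hence not uniformly in $L^1(\partial\Sigma_\nu,|dz/z|)$. Using the fractional power $\sigma_k^{1/2}$ breaks this cancellation and is the key to meeting all three properties simultaneously.
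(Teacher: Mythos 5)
Your construction is correct, and it follows the paper's skeleton (apply Lemma \ref{FM-1}(2) to $f\equiv 1$, use the functions $\sigma_k$ of (\ref{sigmak}) and Lemma \ref{FM-2}, then factor each summand $\alpha_{m,k,j}\phi_{m,k,j}$ into three $H^\infty_0$-pieces), but it diverges at the factorization step in a genuine way. The paper takes $\Delta_{m,k,j}=\alpha_{m,k,j}\sigma_k$, so that property (2) is exactly the uniform $L^1(\partial\Sigma_\nu,\vert dz/z\vert)$ estimate for $\sigma_k$, and then obtains $\psi_{m,k,j},\widetilde{\psi}_{m,k,j}$ by the inner--outer factorization of $\varphi_{m,k,j}=\phi_{m,k,j}/\sigma_k$, with $\vert\psi\vert=\vert\widetilde{\psi}\vert=\vert\varphi\vert^{1/2}$ on $\partial\Sigma_\mu$; property (1) is then deduced from boundary estimates via the maximum principle. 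You instead take $\psi_{m,k,j}=\widetilde{\psi}_{m,k,j}=2^{-j/2}\sigma_k^{1/2}$ (a holomorphic square root exists since $\sigma_k$ is non-vanishing on the simply connected sector, as $\rho^k e^{i\gamma}\notin\Sigma_\mu$) and push $\phi_{m,k,j}/\sigma_k$ into $\Delta_{m,k,j}$. This makes all three factors completely explicit: property (1) follows pointwise from the upper bound of Lemma \ref{FM-2} and the geometric series in $j$ and $k$; the annular bound $\vert\Delta_{m,k,j}\vert\lesssim\rho^{-\frac14\vert k-n\vert}$ (from Lemma \ref{FM-1}(1) and the lower bound of Lemma \ref{FM-2}, the $2^{\pm j}$ factors cancelling) gives both the uniform bound in (3) and, after summing the annular contributions of $\partial\Sigma_\nu$, property (2); and the $H^\infty_0$ membership of $\Delta_{m,k,j}$ is exactly the paper's own observation (via Lemma \ref{FM-1}(3)) that $\phi_{m,k,j}/\sigma_k\in H^\infty_0(\Sigma_\mu)$. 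What your route buys is the elimination of the inner--outer factorization and of the boundary-to-interior maximum-principle argument, all estimates being interior and explicit; what the paper's route buys is a $\Delta_i$ independent of the building blocks $\phi_{m,k,j}$ and a symmetric treatment of $\psi_i,\widetilde{\psi}_i$. Your closing remark correctly identifies why the naive symmetric choice $\psi_i=\widetilde{\psi}_i=2^{-j/2}\sigma_k$ would leave $\Delta_i$ bounded but with no decay in $\vert k-n\vert$, hence without the uniform $L^1(\vert dz/z\vert)$ bound of (2).
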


\begin{proof}
We fix $\gamma\in(\mu,\pi)$ and consider $\rho>1$
and the family $(\phi_{m,k,j})_{\substack{m=1,2; \\
k\in\footnotesize{\Zdb}; \,j\geq  0}}$ given by Lemma \ref{FM-1}.
We apply part (2) of the latter lemma to the constant function 
$f\equiv 1$. This yields a bounded family
$(\alpha_{m,k,j})_{\substack{m=1,2; \\
k\in\footnotesize{\Zdb}; \,j\geq  0}}$ of $\Cdb$ such that
\begin{equation}\label{1}
1 =\,\sum_{\substack{m=1,2; \\
k\in\footnotesize{\Zdb};\, j\geq  0}}
\alpha_{m,k,j}\,\phi_{m,k,j}(z)\,,
\qquad z\in \Sigma_\mu.
\end{equation}

Consider $m\in\{1,2\},\, k\in \Zdb,\, j\geq 0$.
We define
$$
\varphi_{m,k,j}\,=\,\frac{
\phi_{m,k,j}}{\sigma_k}\,.
$$
According to part (3) of Lemma \ref{FM-1}, this 
is an element of $H^\infty_0(\Sigma_\mu)$.
The inner-outer factorization of functions
in $H^\infty(\Sigma_\mu)$ provides
two functions $\psi_{m,k,j}$ and 
$\widetilde{\psi}_{m,k,j}$
in $H^\infty_0(\Sigma_\mu)$ such that 
$$
\varphi_{m,k,j}(z)\,=\,
\psi_{m,k,j}(z)\,
\widetilde{\psi}_{m,k,j}(z),\qquad z\in\Sigma_\mu,
$$
and 
$$
\vert \psi_{m,k,j}(z)
\vert \,=\, 
\vert \widetilde{\psi}_{m,k,j}(z)
\vert \,=\, \vert \varphi_{m,k,j}(z)
\vert^\frac12,\qquad \hbox{a.e.-}\,z\in\partial\Sigma_\mu.
$$

According to part (1) of Lemma \ref{FM-1} and 
the lower bound in Lemma \ref{FM-2}, 
there exists a constant $c_0>0$ such that
for any $z\in\Sigma_\mu$  satisfying
$\rho^n\leq\vert z\vert\leq \rho^{n+1}$, we have
$$
\bigl\vert \varphi_{m,k,j}(z)\bigr\vert\,\leq\,
c_0\, 2^{-j}\,\rho^{-\frac14\vert k-n\vert},\qquad
m\in\{1,2\},\, k\in \Zdb,\, j\geq 0.
$$
This implies that 
$$
\sup_{z\in\partial\Sigma_\mu} 
\sum_{\substack{m=1,2; \\
k\in\footnotesize{\Zdb};\, j\geq  0}}
\vert \psi_{m,k,j}(z)
\vert \,=\,\sup_{z\in\partial\Sigma_\mu} 
\sum_{\substack{m=1,2; \\
k\in\footnotesize{\Zdb};\, j\geq  0}}
\vert \varphi_{m,k,j}(z)
\vert^\frac12\,<\infty.
$$
Using the maximum principle of holomorphic functions we deduce that 
\begin{equation}\label{FM-5}
\sup_{z\in \Sigma_\mu} 
\sum_{\substack{m=1,2; \\
k\in\footnotesize{\Zdb};\, j\geq  0}}
\vert \psi_{m,k,j}(z)
\vert \,<\infty.
\end{equation}
Likewise we have
\begin{equation}\label{FM-6}
\sup_{z\in \Sigma_\mu} 
\sum_{\substack{m=1,2; \\
k\in\footnotesize{\Zdb};\, j\geq  0}}
\vert \widetilde{\psi}_{m,k,j}(z)
\vert \,<\infty.
\end{equation}

Let $\nu\in(0,\mu)$. Applying
the upper bound in Lemma \ref{FM-2}, we obtain
that for any $k\in\Zdb$,
we have
\begin{align*}
\int_{\partial\Sigma_\nu} 
\vert\sigma_k(z)\vert\, 
\Bigl\vert\frac{dz}{z}\Bigr\vert \
& =\,\sum_{n\in\footnotesize{\Zdb}}
\int_{\partial\Sigma_\nu\cap
\{\rho^n\leq\vert z\vert\leq \rho^{n+1}\}} 
\vert\sigma_k(z)\vert\, 
\Bigl\vert\frac{dz}{z}\Bigr\vert\\
&\leq c_2\,\sum_{n\in\footnotesize{\Zdb}}
\rho^{-\frac14 \vert k -n \vert}\,
\int_{\partial\Sigma_\nu\cap
\{\rho^n\leq\vert z\vert\leq \rho^{n+1}\}} 
\Bigl\vert\frac{dz}{z}\Bigr\vert.
\end{align*}
This last term is equal to 
$$
2c_2\,{\rm log}(\rho)\,\sum_{m\in
\footnotesize{\Zdb}}\rho^{-\frac{m}{4}}\,,
$$ 
which is independent
of $k$. Hence
\begin{equation}\label{FM-7}
\sup_{k\in\footnotesize{\Zdb}}
\int_{\partial\Sigma_\nu} 
\vert\sigma_k(z)\vert\, 
\Bigl\vert\frac{dz}{z}\Bigr\vert \ <\infty.
\end{equation}

Now define 
$$
\Delta_{m,k,j}\,=\,
\alpha_{m,k,j}\,\sigma_k,
$$
for any  $m\in\{1,2\},\, k\in \Zdb,\, j\geq 0$.
This is a uniformly bounded family  and 
(\ref{1}) now reads
$$
1 =\,\sum_{\substack{m=1,2; \\
k\in\footnotesize{\Zdb};\, j\geq  0}}
\Delta_{m,k,j}(z)\,\psi_{m,k,j}(z)\widetilde{\psi}_{m,k,j}(z)\,,
\qquad z\in \Sigma_\mu.
$$

Reindexing the families $(\Delta_{m,k,j})_{m,k,j}$, 
$(\psi_{m,k,j})_{m,k,j}$ and $(\widetilde{\psi}_{m,k,j})_{m,k,j}$
as sequences, the result follows from
this identity and the estimates (\ref{FM-5}), (\ref{FM-6})
and (\ref{FM-7}).
\end{proof}

\begin{proof}[Proof of Theorem \ref{d-KW}]
We assume that $(A_1,\ldots,A_d)$ admits an
$H^\infty(\Sigma_{\theta_1}\times\cdots
\times\Sigma_{\theta_d})$ joint functional
calculus. We consider $\theta'_k>\omega_k$, $k=1,\ldots,d$,
and we fix $\nu_k,\mu\in (0,\pi)$
such that
$$
\omega_k<\nu_k<\theta'_k<\mu \qquad \hbox{and}\qquad
\theta_k<\mu,\qquad k=1,\ldots,d.
$$
We consider the three sequences $(\Delta_i)_{i\geq 1}$,
$(\psi_i)_{i\geq 1}$ and $(\widetilde{\psi}_i)_{i\geq 1}$
from Proposition \ref{FM-3}. For any $i_1,\ldots,i_d\geq 1$, we 
let
$$
\Delta_{i_1,\ldots,i_d}=\Delta_{i_1}\otimes\cdots\otimes\Delta_{i_d}
\,\in H^{\infty}_0(\Sigma_{\mu}\times\cdots\times\Sigma_{\mu})
$$
denote the function taking any $(z_1,\ldots,z_d)$ in $\Sigma_{\mu}^d$
to 
$\Delta_{i_1}(z_1)\cdots\Delta_{i_d}(z_d)$. We similarly define
$\psi_{i_1,\ldots,i_d}=\psi_{i_1}\otimes\cdots\otimes\psi_{i_d}$
and $\widetilde{\psi}_{i_1,\ldots,i_d}=\widetilde{\psi}_{i_1}\otimes
\cdots\otimes\widetilde{\psi}_{i_d}$.
According to Proposition \ref{FM-3}, we have
\begin{equation}\label{FM-9}
\sup_{(z_1,\ldots,z_d)\in \Sigma_{\mu}^d}
\,\sum_{i_1,\ldots,i_d=1}^{\infty} \bigl\vert
\bigl(\Delta_{i_1,\ldots,i_d}\psi_{i_1,\ldots,i_d}
\widetilde{\psi}_{i_1,\ldots,i_d}\bigr)(z_1,\ldots,z_d)\bigr\vert\, <\infty 
\end{equation}
and
\begin{equation}\label{FM-8}
1=\,\sum_{i_1,\ldots,i_d=1}^{\infty} 
\Delta_{i_1,\ldots,i_d}\psi_{i_1,\ldots,i_d}
\widetilde{\psi}_{i_1,\ldots,i_d}
\end{equation}
on $\Sigma_{\mu}^d$.

Let $f\in H^\infty_0(\Sigma_{\theta'_1}\times\cdots\times\Sigma_{\theta'_d})$. 
For convenience we set $\A=(A_1,\ldots,A_d)$ and in this proof,
we write $f(\A)$ instead of $f(A_1,\ldots,A_d)$.  It is easy to deduce 
from (\ref{FM-8}), (\ref{FM-9}) and Fubini's theorem that 
$$
\sum_{i_1,\ldots,i_d=1}^{\infty}
\bignorm{f(\A)\Delta_{i_1,\ldots,i_d}(\A)\psi_{i_1,\ldots,i_d}(\A)
\widetilde{\psi}_{i_1,\ldots,i_d}(\A)}\,<\infty
$$
and 
$$
f(\A)= \,\sum_{i_1,\ldots,i_d=1}^{\infty}
f(\A)\Delta_{i_1,\ldots,i_d}(\A)\psi_{i_1,\ldots,i_d}(\A)
\widetilde{\psi}_{i_1,\ldots,i_d}(\A).
$$

We consider two arbitrary elements $x\in X$ and $y\in X^*$. 
For any integer $N\geq 1$, we set 
$$
f_N=\sum_{i_1,\ldots,i_d=1}^{N}
f \Delta_{i_1,\ldots,i_d} \psi_{i_1,\ldots,i_d}
\widetilde{\psi}_{i_1,\ldots,i_d}. 
$$
By Cauchy-Schwarz, we have 
\begin{align*}
\bigl\vert \bigl\langle
f_N(\A)x,y\bigr\rangle\bigr\vert\,&=
\Bigl\vert\sum_{i_1,\ldots,i_d=1}^{N}
\bigl\langle f(\A)\Delta_{i_1,\ldots,i_d}(\A)\psi_{i_1,\ldots,i_d}(\A)x,
\widetilde{\psi}_{i_1,\ldots,i_d}(\A)^*y\bigr\rangle\Bigr\vert\\
&\leq \Bignorm{
\sum_{i_1,\ldots,i_d=1}^{N}\varepsilon_{i_1,\ldots,i_d}\otimes
f(\A)\Delta_{i_1,\ldots,i_d}(\A)\psi_{i_1,\ldots,i_d}(\A)x}_{{\rm Rad}(X)}\\ 
& \times\,\Bignorm{
\sum_{i_1,\ldots,i_d=1}^{N}\varepsilon_{i_1,\ldots,i_d}\otimes
\widetilde{\psi}_{i_1,\ldots,i_d}(\A)^*y}_{{\rm Rad}(X^*)}.
\end{align*}
We shall now estimate each of the two factors in the upper bound.

According to the $R$-sectoriality assumption,
each set $\{zR(z,A_k)\,:\,z\in\partial\Sigma_{\nu_k}\}$ 
is $R$-bounded, hence by \cite[Proposition 8.1.19]{BOOK}, the product set 
$$
\biggl\{\prod_{k=1}^d \bigl(z_k R(z_k,A_k)\bigr)\,:\,
z_k\in\partial\Sigma_{\nu_k},\ k=1,\ldots,d\biggr\}
$$
is $R$-bounded.
Further it follows from part (2) of 
Proposition \ref{FM-3} that
$$
\int_{\prod_{k=1}^d \partial \Sigma_{\nu_k}}
\bigl\vert (f
\Delta_{i_1,\ldots,i_d})(z_1,\ldots,z_d)
\bigr\vert
\prod_{k=1}^d \Bigl\vert
\frac{dz_k}{z_k}
\Bigr\vert\ \lesssim \norm{f}_{\infty,\Sigma_{\theta'_1}\times
\cdots\times\Sigma_{\theta'_d}}.
$$
By (\ref{fAi}), we have
$$
f(\A)\Delta_{i_1,\ldots,i_d}(\A)
\,=\,
\Bigl(\frac{1}{2\pi i} \Bigr)^{d} 
\int_{\prod_{k=1}^d \partial \Sigma_{\nu_k}} (f
\Delta_{i_1,\ldots,i_d})(z_1,\ldots,z_d) 
\prod_{k=1}^d\bigl(z_k R(z_k,A_k)\bigr) \,
\prod_{k=1}^d \frac{dz_k}{z_k}.
$$
Applying \cite[Theorem 8.5.2]{BOOK}, we deduce from the above two 
results 
that the set 
$$
\F_f\,:=\bigl\{f(\A)\Delta_{i_1,\ldots,i_d}(\A)\, :\,
i_1,\ldots, i_d\geq 1\bigr\}
$$
is $R$-bounded, with an estimate 
$$
\R(\F_f)\lesssim 
\norm{f}_{\infty,\Sigma_{\theta'_1}\times
\cdots\times\Sigma_{\theta'_d}}.
$$
Consequently, 
\begin{align*}
\Bignorm{
\sum_{i_1,\ldots,i_d=1}^{N}\varepsilon_{i_1,\ldots,i_d}\otimes
f(\A)&\Delta_{i_1,\ldots,i_d}(\A)\psi_{i_1,\ldots,i_d}(\A)x}_{{\rm Rad}(X)}
\\ & \lesssim\,\norm{f}_{\infty,\Sigma_{\theta'_1}\times
\cdots\times\Sigma_{\theta'_d}}
\Bignorm{\sum_{i_1,\ldots,i_d=1}^{N}\varepsilon_{i_1,\ldots,i_d}\otimes
\psi_{i_1,\ldots,i_d}(\A)x}_{{\rm Rad}(X)}.
\end{align*}
By part (1) of Proposition \ref{FM-3}, 
there exists a constant $C\geq 0$ such that
$$
\Bignorm{\sum_{i_1,\ldots,i_d=1}^N
\eta_{i_1,\ldots,i_d}\psi_{i_1,\ldots,i_d}}_{\infty, \Sigma_{\mu}^d}\,\leq\,C
$$
for any $\eta_{i_1,\ldots,i_d}=\pm 1$. Since 
$(A_1,\ldots,A_d)$ has an
$H^\infty(\Sigma_{\theta_1}\times\cdots
\times\Sigma_{\theta_d})$ joint functional
calculus, this implies an estimate
$$
\Bignorm{\sum_{i_1,\ldots,i_d=1}^N
\eta_{i_1,\ldots,i_d}
\psi_{i_1,\ldots,i_d}(\A)}\,\leq\, C',
\qquad
\eta_{i_1,\ldots,i_d}=\pm 1.
$$
Consequently, we have an estimate 
$$
\Bignorm{\sum_{i_1,\ldots,i_d=1}^{N}\varepsilon_{i_1,\ldots,i_d}\otimes
\psi_{i_1,\ldots,i_d}(\A)x}_{{\rm Rad}(X)}
\,\lesssim\,\norm{x},
$$
and hence an estimate
$$
\Bignorm{
\sum_{i_1,\ldots,i_d=1}^{N}\varepsilon_{i_1,\ldots,i_d}\otimes
f(\A)\Delta_{i_1,\ldots,i_d}(\A)\psi_{i_1,\ldots,i_d}
(\A)x}_{{\rm Rad}(X)}\,\lesssim\,\norm{f}_{\infty,
\Sigma_{\theta'_1}\times
\cdots\times\Sigma_{\theta'_d}}\,\norm{x},
$$
not depending on $N$.

The second factor in the majorization
of $\vert\langle f_N(\A)x,y\rangle\vert$ can be treated similarly
and we obtain an estimate
$$
\Bignorm{
\sum_{i_1,\ldots,i_d=1}^{N}\varepsilon_{i_1,\ldots,i_d}\otimes
\widetilde{\psi}_{i_1,\ldots,i_d}(\A)^*y}_{{\rm Rad}(X^*)}
\lesssim\,\norm{y},
$$
not depending on $N$.

Altogether we thus have
$$
\bigl\vert \bigl\langle
f_N(\A)x,y\bigr\rangle\bigr\vert\,
\,\lesssim\,\norm{f}_{\infty,\Sigma_{\theta'_1}\times
\cdots\times\Sigma_{\theta'_d}}\,\norm{x}\,\norm{y}.
$$
Passing to the limit and taking the supremum over all $x,y$ of norms 
less than or equal to 1, we obtain 
$$
\norm{f(A_1,\ldots,A_d)}\,\lesssim\,\norm{f}_{\infty,\Sigma_{\theta'_1}\times
\cdots\times\Sigma_{\theta'_d}}.
$$
This  estimate can be proved as well for any
$f\in H_{0,1}^\infty(\Sigma_{\theta'_1}\times\cdots\times\Sigma{\theta'_d})$,
which yields the result.
\end{proof}

\section{Square functions for commuting families}

In this section we introduce square functions
associated with commuting families of sectorial operators
and establish connections with 
$H^\infty$ joint functional calculus.
As in the single case, the definition of square functions 
on general Banach spaces will
require the use of the Kalton-Weis $\gamma$-spaces so we first
supply a short introduction to these spaces.

Throughout we let $X$ be a Banach space and we let $H$ be a 
Hilbert space. We identify the algebraic tensor product 
$H\otimes X$ with the subspace of $B(H^*,X)$ of all 
continuous finite rank operators in the usual way. Let $(g_j)_{j\geq 1}$ 
be an independent sequence of complex valued standard Gaussian variables on
some probability space. For any $u\in H\otimes X$, there exists 
a finite orthonormal sequence $(e_j)_{j\geq 1}$ of $H$
and a finite sequence
$(x_j)_{j\geq 1}$ of $X$
such that $u=\sum_j e_j\otimes x_j$. According to (\ref{Rot}),
the quantity
$$
\norm{u}_\gamma\, :=\,\Bignorm{\sum_{j\geq 1} g_j\otimes x_j}_{G(X)}
$$
does not depend on the choice of the sequences 
$(e_j)_{j\geq 1}$ and 
$(x_j)_{j\geq 1}$ representing $u$. 
It turns out that $\norm{\,\cdotp}_\gamma$ is a norm on
$H\otimes X$. The Kalton-Weis space $\gamma(H^*;X)$
is defined as the completion of $H\otimes X$ with respect to this norm.
For any $u\in H\otimes X$, the operator norm of $u$
is smaller than or equal to $\norm{u}_\gamma$. Hence the inclusion map
$H\otimes X\subset B(H^*,X)$ extends to a contractive embedding
$$
\gamma(H^*;X)\,\subset B(H^*,X).
$$
These $\gamma$-spaces first appeared
in a preliminary version of \cite{KW2}. We refer to this paper 
and to \cite[Chapter 9]{BOOK} for some background. We will 
refer several times to the latter book for some
properties of the $\gamma$-spaces that we do not list here. 
We mention however two of these properties.
First we have the following description of $\gamma(H^*;X)$
as a subspace of $ B(H^*,X)$
(see \cite[Theorem 9.1.20]{BOOK}).

\begin{lemma}\label{c0}
For any $u\in\gamma(H^*;X)$, we have
\begin{equation}\label{g-norm}
\norm{u}_{\gamma(H^*;X)} = \,\sup\Bigl\{
\Bignorm{\sum_{j} g_j\otimes u(e_j^*)}_{G(X)}\Bigr\},
\end{equation}
where the supremum runs over all finite orthonormal 
families $(e_j^*)_{j}$
of $H^*$. 

Moreover
if $X$ does not contain $c_0$, then
an operator $u\in B(H^*,X)$ belongs to $\gamma(H^*;X)$
if and only if the supremum in the right-hand side of (\ref{g-norm})
is finite. 
\end{lemma}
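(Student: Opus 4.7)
The plan is to establish (\ref{g-norm}) first on the dense subspace $H \otimes X$ using the rotational invariance (\ref{Rot}), then extend by density, and to deduce the second statement by a truncation argument whose key input is the It\^o-Nisio theorem. Concretely, for $u = \sum_{i=1}^m f_i \otimes y_i$ with $(f_i)$ orthonormal in $H$, any finite orthonormal family $(e_j^*)_{j=1}^n$ in $H^*$ gives $u(e_j^*) = \sum_i e_j^*(f_i)\,y_i$, and the matrix $[e_j^*(f_i)]_{j,i}$ is easily checked to have operator norm at most $1$ (viewing it, via the Riesz isomorphism, as the composition of the isometry $(c_i)\mapsto \sum_i c_i f_i$ with the orthogonal projection onto the span of $(e_j^*)$). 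Applying (\ref{Rot}) then yields $\bignorm{\sum_j g_j \otimes u(e_j^*)}_{G(X)} \leq \|u\|_\gamma$, with equality attained when $(e_j^*)$ is the dual family of $(f_i)$. This proves (\ref{g-norm}) on $H \otimes X$, and extends by density to $\gamma(H^*;X)$, since the contractive embedding $\gamma(H^*;X) \hookrightarrow B(H^*,X)$ makes both sides of (\ref{g-norm}) continuous in the $\gamma$-norm.

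For the second statement, let $u \in B(H^*, X)$ and let $M$ denote the (finite) supremum appearing in (\ref{g-norm}). For each finite orthonormal $J = (e_j^*)_{j=1}^n$ in $H^*$, let $P_J$ be the orthogonal projection of $H^*$ onto $\operatorname{span}(J)$ and set $u_J = u \circ P_J \in H \otimes X$. By the identity already proved, $\|u_J\|_\gamma \leq M$, and the net $(u_J)$ converges strongly to $u$. To promote this to convergence in $\gamma(H^*; X)$, I would invoke the It\^o-Nisio/Hoffmann-J{\o}rgensen theorem: in a Banach space not containing $c_0$, a Gaussian series whose partial sums are uniformly bounded in $L^2$ converges in $L^2$. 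Running this along an orthonormal basis of a separable subspace of $H^*$ (the non-separable case reduces to this in standard fashion) produces a limit $v \in \gamma(H^*;X)$ with $\|v\|_\gamma \leq M$ satisfying $v(e_j^*) = u(e_j^*)$ on a spanning family, so $v = u$. The main obstacle is precisely this last step: without the $c_0$-hypothesis, uniform boundedness of the Gaussian truncations $u_J$ does not force convergence in $\gamma(H^*;X)$, and indeed the characterisation fails in $c_0$ itself. The remaining ingredients, namely (\ref{Rot}) and the right-ideal property $\|u \circ P\|_\gamma \leq \|u\|_\gamma \|P\|$ of $\gamma$-spaces used implicitly in the definition of $u_J$, are standard.
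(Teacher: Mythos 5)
Your proposal is correct; note that the paper does not prove Lemma \ref{c0} at all but simply cites \cite[Theorem 9.1.20]{BOOK}, and your argument is essentially the standard proof given there: the rotation invariance (\ref{Rot}) plus a density/seminorm argument for the equality (\ref{g-norm}), and uniformly bounded Gaussian truncations combined with the Hoffmann-J{\o}rgensen--Kwapie\'n theorem (which is the precise name for the ``no $c_0$ implies convergence of $L^2$-bounded Gaussian series'' statement you attribute to It\^o--Nisio/Hoffmann-J{\o}rgensen) for the second assertion. The only glossed points -- the reduction to separable $H^*$ and the identification of the limit $v$ with $u$ -- are handled exactly as you indicate in the cited reference, so there is no gap.
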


Second, we have
the following fundamental tensor extension property,
which is a straighforward consequence of (\ref{Rot})
(see \cite[Theorem 9.1.10]{BOOK}).

\begin{lemma}\label{Tensorisation} For any $S\in B(H)$,
the mapping $S\otimes I_X\colon H\otimes X \to
H\otimes X$ (uniquely) extends to a bounded map
$$
S\overline{\otimes} I_X\colon \gamma(H^*;X)\longrightarrow
\gamma(H^*;X),
$$
with $\norm{S\overline{\otimes} I_X}=\norm{S}$.
\end{lemma}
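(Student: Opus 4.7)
The plan is to verify the norm bound on the algebraic tensor product $H\otimes X$ by appealing directly to the rotational invariance (\ref{Rot}), and then to invoke density.

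First I would take an arbitrary $u\in H\otimes X$ and represent it as $u=\sum_{j=1}^n e_j\otimes x_j$ with $(e_j)_{j=1}^n$ a finite orthonormal family in $H$. Applying $S\otimes I_X$ gives $(S\otimes I_X)(u)=\sum_{j=1}^n S(e_j)\otimes x_j$. The vectors $S(e_j)$ need not be orthonormal, so to compute the $\gamma$-norm I would complete $(e_1,\ldots,e_n)$ into an orthonormal basis $(f_i)_{i=1}^m$ of the finite-dimensional subspace $\mathrm{span}(e_1,\ldots,e_n)+\mathrm{span}(S(e_1),\ldots,S(e_n))$, and write $S(e_j)=\sum_{i=1}^m a_{ij}f_i$ with $a_{ij}=\langle S(e_j),f_i\rangle$. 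Since $(f_i)$ is orthonormal, the definition of $\norm{\,\cdotp}_\gamma$ gives
$$
\bignorm{(S\otimes I_X)(u)}_\gamma = \Bignorm{\sum_{i=1}^m g_i\otimes\Bigl(\sum_{j=1}^n a_{ij}x_j\Bigr)}_{G(X)} = \Bignorm{\sum_{i,j} a_{ij}\,g_i\otimes x_j}_{G(X)}.
$$

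Next I would apply the invariance property (\ref{Rot}) (after padding $[a_{ij}]$ with zero rows/columns so that it becomes a square matrix of size $\max(n,m)$, which does not change its operator norm) to bound this quantity by $\norm{[a_{ij}]}_{B(\ell^2)}\,\norm{\sum_j g_j\otimes x_j}_{G(X)}$. The matrix $[a_{ij}]$ represents, in the orthonormal bases $(e_j)$ and $(f_i)$, the restriction of $S$ to $\mathrm{span}(e_1,\ldots,e_n)$, so its operator norm is at most $\norm{S}$. Combined with $\norm{\sum_j g_j\otimes x_j}_{G(X)}=\norm{u}_\gamma$, this yields $\norm{(S\otimes I_X)(u)}_\gamma\leq\norm{S}\,\norm{u}_\gamma$. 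Since $H\otimes X$ is dense in $\gamma(H^*;X)$ by definition, the map $S\otimes I_X$ extends uniquely to a bounded operator $S\overline{\otimes}I_X$ on $\gamma(H^*;X)$ with $\bignorm{S\overline{\otimes}I_X}\leq\norm{S}$.

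For the reverse inequality, I would test on rank-one tensors $e\otimes x$ with $e\in H$ and $x\in X$ both nonzero. Writing $e\otimes x=\tfrac{e}{\norm{e}}\otimes\norm{e}x$ shows $\norm{e\otimes x}_\gamma=\norm{e}\,\norm{x}$, and the same computation gives $\bignorm{(S\otimes I_X)(e\otimes x)}_\gamma=\bignorm{S(e)\otimes x}_\gamma=\norm{S(e)}\,\norm{x}$. Dividing and taking the supremum over $e\neq 0$ produces $\norm{S}\leq\bignorm{S\overline{\otimes}I_X}$, which closes the proof. There is no serious obstacle: the only point requiring a little care is the choice of an orthonormal basis that simultaneously accommodates the family $(e_j)$ and the image family $(S(e_j))$, so that (\ref{Rot}) can be invoked cleanly with a well-identified matrix of $S$.
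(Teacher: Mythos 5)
Your proof is correct and follows exactly the route the paper indicates: the paper presents this lemma as a \emph{straightforward consequence of (\ref{Rot})} (citing \cite[Theorem 9.1.10]{BOOK} for the details), and your argument makes that precise by expanding $S(e_j)$ in a common orthonormal basis, applying (\ref{Rot}) after zero-padding $[a_{ij}]$ to a square matrix (which leaves the operator norm unchanged), and deducing the reverse inequality from rank-one tensors. No gaps.
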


We will consider the special case when $H=L^2(\Omega)$ for
some measure space $(\Omega,dm)$. We identify $L^2(\Omega)^*$
with $L^2(\Omega)$ through the standard duality pairing
$$
\langle h_1,h_2\rangle\,=\,\int_\Omega h_1(t)h_2(t)\, dm(t),\qquad
h_1,\, h_2\in L^2(\Omega,dm).
$$
Thus we have inclusions
$L^2(\Omega)\otimes X\subset 
\gamma(L^2(\Omega);X)\subset B(L^2(\Omega),X)$.

Let $\zeta\colon \Omega\to X$ be a measurable function. We say that
$\zeta$ is weakly-$L^2$ provided that
the function $y\circ\zeta$
belongs to $L^2(\Omega)$
for any $y\in X^*$. In this case, one can
define
a bounded operator $u_\zeta\colon L^2(\Omega)\to X\,$ by
$$
\langle y, u_\zeta(h)\rangle\, =\,
\int_{\Omega}  
\langle y, \zeta(t)\rangle  h(t)\,dm(t),\qquad h\in L^2(\Omega),\
 y\in X^*.
$$
The above formula a priori 
defines an operator $u_\zeta$
from $L^2(\Omega)$ into $X^{**}$; 
however it turns out that its range is included in $X$. 
We refer to \cite[Section 4]{KW2} and 
\cite[Section 9.2]{BOOK} for details.

We let $\gamma(\Omega;X)$ denote the space of 
all weakly-$L^2$ measurable functions
$\zeta\colon \Omega\to X$ such that $u_\zeta$ belongs
to $\gamma(L^2(\Omega);X)$. For such a function
$\zeta$, we set
$$
\norm{\zeta}_{\gamma(\Omega;X)}\, :=\, 
\norm{u_\zeta}_{\gamma(L^2(\Omega);X)}.
$$
It is plain to see that through the identification
$\zeta\longleftrightarrow u_\zeta$, 
$\gamma(\Omega;X)$ contains 
$L^2(\Omega)\otimes X$. Thus $\gamma(\Omega;X)$
is a dense subspace of $\gamma(L^2(\Omega);X)$.

\begin{lemma}\label{Bessel}
Assume that $X$ does not contain $c_0$ and let
$\zeta\colon \Omega\to X$ be a weakly-$L^2$ measurable function
such that $\zeta\cdotp h\in L^1(\Omega;X)$ for any $h\in L^2(\Omega)$.
Then $u_\zeta$ belongs to $\gamma(\Omega;X)$ if and only
if there exists a constant $C\geq 0$ such that
$$
\Bignorm{\sum_{j} g_j\otimes \Bigl(
\int_\Omega \zeta(t) e_j(t)\,dm(t)\Bigr)}_{G(X)}
\,\leq\,C
$$
for any  finite orthonormal sequence $(e_j)_{j}$
of $L^2(\Omega)$. In this case, $\norm{\zeta}_{\gamma(\Omega;X)}$
is the smallest constant $C$ satisfying this property.
\end{lemma}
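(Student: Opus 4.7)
The plan is to reduce the statement directly to Lemma \ref{c0}, using the identification $L^2(\Omega)^*\simeq L^2(\Omega)$ that was set up just before the statement. The only real work is to identify the operator action $u_\zeta(e_j)$ with the Bochner integral $\int_\Omega \zeta(t)e_j(t)\,dm(t)$ under the integrability hypothesis.

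First I would fix a finite orthonormal family $(e_j)_j$ of $L^2(\Omega)$ and observe that, since each $e_j$ lies in $L^2(\Omega)$, the assumption $\zeta\cdotp h\in L^1(\Omega;X)$ for every $h\in L^2(\Omega)$ guarantees that $\zeta\cdotp e_j\in L^1(\Omega;X)$. Hence the Bochner integral
$$
x_j\,:=\,\int_\Omega \zeta(t)\, e_j(t)\,dm(t)
$$
is a well-defined element of $X$. Pairing with any $y\in X^*$ and using that duality commutes with the Bochner integral, one gets
$$
\langle y,x_j\rangle\,=\,\int_\Omega \langle y,\zeta(t)\rangle\, e_j(t)\,dm(t)\,=\,\langle y,u_\zeta(e_j)\rangle,
$$
by the very definition of $u_\zeta$ recalled before the statement. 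Since $y\in X^*$ is arbitrary, this forces $u_\zeta(e_j)=x_j$ in $X$.

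Once this identification is in hand, the rest is immediate. Through the duality pairing fixed in the preamble, finite orthonormal families of $L^2(\Omega)^*$ correspond bijectively to finite orthonormal families of $L^2(\Omega)$. Applying Lemma \ref{c0} with $H=L^2(\Omega)$ to the operator $u_\zeta\in B(L^2(\Omega),X)$, and using the hypothesis that $X$ does not contain $c_0$, we get that $u_\zeta\in\gamma(L^2(\Omega);X)$ if and only if
$$
\sup\Bignorm{\sum_j g_j\otimes u_\zeta(e_j)}_{G(X)}\,<\,\infty,
$$
the supremum being taken over all finite orthonormal families $(e_j)_j$ in $L^2(\Omega)$, and moreover $\norm{u_\zeta}_{\gamma(L^2(\Omega);X)}$ equals this supremum. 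Substituting $u_\zeta(e_j)=\int_\Omega \zeta(t)e_j(t)\,dm(t)$ and recalling that by definition $\norm{\zeta}_{\gamma(\Omega;X)}=\norm{u_\zeta}_{\gamma(L^2(\Omega);X)}$ yields both the characterization and the identification of the norm as the smallest admissible constant $C$.

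The only subtle point, which I expect to be the main (and minor) obstacle, is the interchange of the functional $y$ with the Bochner integral in the identification of $u_\zeta(e_j)$; this is a standard fact for Bochner-integrable functions but depends on the $L^1$-integrability provided by the hypothesis $\zeta\cdotp h\in L^1(\Omega;X)$. Everything else is a direct specialization of Lemma \ref{c0} to the Hilbert space $L^2(\Omega)$, with no further analytic input needed.
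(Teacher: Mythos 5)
Your proof is correct and follows the same route as the paper: identify $u_\zeta(e_j)$ with the Bochner integral $\int_\Omega \zeta(t)e_j(t)\,dm(t)$ (using the $L^1$ hypothesis), then apply Lemma \ref{c0} with $H=L^2(\Omega)$ via the self-duality pairing. The paper's proof is simply a terser version of yours, stating the identification as ``clear'' and then invoking Lemma \ref{c0}.
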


\begin{proof}
It  is clear that for any $j$, we have
$\int_\Omega \zeta(t) e_j(t)\,dm(t)\,= u_\zeta(e_j)$.
Hence the
result follows from Lemma \ref{c0}.
\end{proof}

In the sequel we will work with the measure space 
$\Omega_0 = (\Rdb_+^*,\frac{dt}{t})$
and with its powers $\Omega_0^d$, equipped with
\begin{equation}\label{mes}
dM(t) : = \,\frac{dt_1}{t_1}\,\cdots\frac{dt_d}{t_d}\,,\qquad 
t=(t_1,\ldots,t_d)\in\Omega_0^d.
\end{equation}

\bigskip
Throughout the rest of this section,
we let $(A_1,\ldots,A_d)$ be a commuting family of sectorial
operators on $X$, with respective types $\omega_1,\ldots,\omega_d$.
Let $F\in H^\infty_{0,1}(\Sigma_{\nu_1}\times\cdots\times\Sigma_{\nu_d})$, 
with $\nu_k\in(\omega_k,\pi)$ for all 
$k=1,\ldots,d$. By Lebesgue's Theorem and (\ref{fAi}), the function 
$$
t=(t_1,\ldots,t_d)\,\mapsto\, F(t_1A_1,\ldots,t_dA_d)
$$
is continuous
from $\Omega_0^d$ into $B(X)$. Let $x\in X$. 
If $t\mapsto F(t_1A_1,\ldots,t_dA_d)x$
belongs to $\gamma(\Omega_0^d;X)$, then we set
$$
\norm{x}_F = \bignorm{t\mapsto F(t_1A_1,\ldots,t_dA_d)x}_{\gamma(\Omega_0^d;X)}.
$$
We set $\norm{x}_F=\infty$ otherwise.

Extending the usual terminology for single 
sectorial operators (see \cite{KW2, HH, BOOK}), 
we introduce the following.

\begin{definition}
We say that $(A_1,\ldots,A_d)$ admits a square 
function estimate with respect to $F\in H^\infty_{0,1}(\Sigma_{\nu_1}
\times\cdots\times\Sigma_{\nu_d})$
if there exists a constant
$K\geq 0$ such that 
\begin{equation}\label{SFE}
\norm{x}_F\,\leq K\,\norm{x},\qquad x\in X.
\end{equation}
\end{definition}

The main result of this section is the following theorem,
which generalizes \cite[Theorem 10.4.16]{BOOK}
(see also \cite[Proposition 7.7]{KW2},
\cite[Corollary 6.7]{CDMY} and \cite[Theorem 7.6]{JLX})
from the single to the multivariable case.

\begin{theorem}\label{H-SFE}
Assume that $X$ has finite cotype and that $(A_1,\ldots,A_d)$ admits an
$H^\infty(\Sigma_{\theta_1}\times\cdots\times\Sigma_{\theta_d})$ joint functional
calculus. 
Then for any $F\in H^\infty_{0,1}(\Sigma_{\nu_1}\times\cdots\times\Sigma_{\nu_d})$, with
$\nu_k\in(\theta_k,\pi)$, $(A_1,\ldots,A_d)$ admits a square function estimate with respect to $F$.
\end{theorem}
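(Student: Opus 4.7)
The strategy is to adapt the single-variable proof (\cite[Theorem 10.4.16]{BOOK}, \cite[Proposition 7.7]{KW2}) by combining it with the Franks-McIntosh decomposition machinery of Section \ref{Angle}.

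\textbf{Reduction.} I first reduce to the main case $F \in H^\infty_0(\Sigma_{\nu_1} \times \cdots \times \Sigma_{\nu_d})$ via the direct sum (\ref{Hinfini01}); mixed summands $F \in H^\infty_0\bigl(\prod_{k \in \Lambda} \Sigma_{\nu_k}\bigr)$ with $\Lambda \subsetneq \{1,\ldots,d\}$ are handled by restricting to the subfamily $(A_k)_{k \in \Lambda}$ on the corresponding piece of the ergodic decomposition (Lemma \ref{Ergodic2}). Since $X$ has finite cotype, $X$ does not contain $c_0$, so Lemma \ref{Bessel} applies. The estimate then reduces, uniformly over finite orthonormal families $(e_j)_{j=1}^N$ in $L^2(\Omega_0^d; dM)$, to
$$\Bignorm{\sum_{j=1}^N g_j \otimes h_j(A_1,\ldots,A_d)x}_{G(X)} \lesssim \|x\|,$$
where $h_j(z):=\int_{\Omega_0^d} F(t_1 z_1,\ldots,t_d z_d)\, e_j(t)\, dM(t)$ and Fubini identifies $u_{\zeta_x}(e_j)$ with $h_j(A_1,\ldots,A_d)x$. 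By (\ref{Equiv}), the Gaussian norm may be replaced by the Rademacher norm throughout.

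\textbf{A priori $\ell^2$ bound on $(h_j)$.} By Bessel's inequality and the substitution $s_k = t_k z_k$ along the ray through $z_k$ (which leaves $dt_k/t_k$ invariant),
$$\sum_{j=1}^N \bigl\vert h_j(z_1,\ldots,z_d)\bigr\vert^2 \,\leq\, \int_{\Omega_0^d} \bigl\vert F(t_1 z_1,\ldots,t_d z_d)\bigr\vert^2 dM(t) \,=\, \int_{\Omega_0^d} \Bigl\vert F\bigl(r_1 e^{i\arg z_1},\ldots,r_d e^{i\arg z_d}\bigr)\Bigr\vert^2 dM(r),$$
which is uniformly bounded on $\Sigma_{\nu_1}\times\cdots\times\Sigma_{\nu_d}$ by the decay (\ref{Hinfini0}). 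Hence the vector-valued function $\Psi:=(h_j)_{j=1}^N$ satisfies $\|\Psi\|_{\infty,\ell^2_N} \leq C_F$, with $C_F$ independent of $N$ and $(e_j)$.

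\textbf{Crux: vector-valued Franks-McIntosh.} I apply Proposition \ref{FM-3} coordinatewise to $\Psi$, following exactly the template of the proof of Theorem \ref{d-KW} but now with the coefficient family taking values in $\ell^2_N$. Because the Franks-McIntosh coefficients in Lemma \ref{FM-1}(2) are produced by a linear integral pairing against fixed test functions, the vector-valued version yields coefficients $(\alpha_{i_1,\ldots,i_d}) \subset \ell^2_N$ with $\|\alpha_{i_1,\ldots,i_d}\|_{\ell^2_N} \lesssim \|\Psi\|_\infty$ and scalar building blocks $\Delta_{i_1,\ldots,i_d}, \psi_{i_1,\ldots,i_d}, \widetilde{\psi}_{i_1,\ldots,i_d}$ satisfying the conclusions of Proposition \ref{FM-3}. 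Pairing $\sum_j r_j\otimes h_j(A_1,\ldots,A_d)x$ against an arbitrary element $\sum_j r_j\otimes y_j$ of the unit ball of ${\rm Rad}(X^*)$, expanding along the Franks-McIntosh decomposition, and applying Cauchy-Schwarz in the Rademacher norm on the multi-index $(i_1,\ldots,i_d)$ exactly as in Theorem \ref{d-KW}, the bound splits into a product of ${\rm Rad}(X)$- and ${\rm Rad}(X^*)$-norms involving the operators $\Delta_{i_1,\ldots,i_d}(A)\psi_{i_1,\ldots,i_d}(A)$ and $\widetilde{\psi}_{i_1,\ldots,i_d}(A)^*$; the $\ell^2_N$-control on $(\alpha)$ absorbs the Rademacher sum over $j$.

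\textbf{Main obstacle.} The key difference from Theorem \ref{d-KW} is the absence of an $R$-sectoriality hypothesis, so the $R$-boundedness of the family $\{\Delta_{i_1,\ldots,i_d}(A_1,\ldots,A_d)\}$ used in the above Cauchy-Schwarz step cannot be read off from the assumptions directly. Instead, one invokes the Kalton-Weis principle: on a space with finite cotype, a bounded (joint) $H^\infty$ functional calculus is automatically $R$-bounded on the unit ball of $H^\infty$, which, combined with part (3) of Proposition \ref{FM-3}, delivers the required $R$-boundedness of $\{\Delta_{i_1,\ldots,i_d}(A_1,\ldots,A_d)\}$ and completes the estimate.
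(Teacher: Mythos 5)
Your overall architecture (reduce via Lemma \ref{Bessel} to a uniform Rademacher/Gaussian estimate over orthonormal families, control $\bigl(\sum_j|h_j|^2\bigr)^{1/2}$ by Bessel, then prove a ``quadratic estimate'' by a vector-valued Franks--McIntosh decomposition) is indeed the paper's route, which packages the quadratic estimate as Proposition \ref{Quad}. However, the step you call the main obstacle is resolved by a false principle. It is \emph{not} true that on a space with finite cotype a bounded (joint) $H^\infty$ functional calculus is automatically $R$-bounded on the unit ball of $H^\infty$: that automatic upgrade requires property $(\alpha)$ (Pisier's contraction property), see \cite[Theorem 10.3.4]{BOOK}, and it genuinely fails under finite cotype alone. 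The paper's own Example \ref{Ex} shows this: $S^p$, $p\neq 2$, is UMD and hence has finite cotype, and each of the commuting multiplication operators $A_1,A_2$ there has a bounded $H^\infty(\Sigma_\theta)$ calculus for every $\theta$; if the calculus of $A_1$ were $R$-bounded on the unit ball of $H^\infty(\Sigma_\theta)$, the Kalton--Weis operator-valued calculus argument (as used in \cite{LLLM,ArLM} to treat spaces with $(\alpha)$) would produce a joint calculus for $(A_1,A_2)$, contradicting \cite[Theorem 3.9]{LLLM}. (Equivalently, if your principle held, the $R$-sectoriality hypothesis in Theorem \ref{d-KW} would be superfluous on finite-cotype spaces, and Theorems \ref{H-SFE} and \ref{SFE-H} together would yield joint calculus from individual ones, again contradicting Example \ref{Ex}.) So the $R$-boundedness of $\{\Delta_{i_1,\ldots,i_d}(A_1,\ldots,A_d)\}$, which is exactly the input your Cauchy--Schwarz splitting requires, is not available, and the final step of your argument collapses. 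In the paper, finite cotype enters the quadratic estimate through a different mechanism: the proof of Proposition \ref{Quad} repeats \cite[Theorem 6.3]{LM}, using the $\ell^2$-valued Franks--McIntosh decomposition (with the pointwise bounds $\sum_i|\varphi_{k,i}|\leq C$, $\sum_i|\psi_{k,i}|\leq C$ and the bounded joint calculus) together with \cite[Corollary 3.4]{KW}; no $R$-boundedness of operator families obtained from the calculus is invoked anywhere.

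A secondary, more technical gap: your functions $h_j$ belong to $H^\infty(\Sigma_{\nu_1}\times\cdots\times\Sigma_{\nu_d})$ but in general not to $H^\infty_0$, so $h_j(A_1,\ldots,A_d)$ is not defined by the $H^\infty_{0,1}$ calculus, and the identification $u_{\zeta_x}(e_j)=h_j(A_1,\ldots,A_d)x$ ``by Fubini'' is not justified as stated. The paper regularizes by $\Phi_m^{\otimes d}$, proves the integral representation (\ref{IntRep}) for $F_j\Phi_m^{\otimes d}$ using the integrability argument of \cite[Lemma 6.5]{JLX}, obtains estimates uniform in $m$, and then removes $\Phi_m$ via (\ref{Approx}) and \cite[Lemma 4.10 (b)]{KW2} for $x\in\overline{R(A_1)}\cap\cdots\cap\overline{R(A_d)}$. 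Your sketch needs this approximation (or an equivalent justification) to be complete, in addition to a correct proof of the quadratic estimate.
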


The proof of this theorem will use the following proposition, which
extends \cite[Theorem 6.3]{LM}. In the case
of a single sectorial operator, a similar
approach was developed in \cite{HH} (see also \cite[10.4.c]{BOOK}).

\begin{proposition}\label{Quad}
Assume that $X$ has finite cotype and that $(A_1,\ldots,A_d)$ admits an
$H^\infty(\Sigma_{\theta_1}\times\cdots\times\Sigma_{\theta_d})$ 
joint functional calculus. Then for any $\nu_k\in(\theta_k,\pi)$, $k=1,\ldots,d$,
there exists a constant $K\geq 0$ such that 
$$
\Bignorm{\sum_{j=1}^n r_j\otimes F_j(A_1,\ldots,A_d)x}_{{\rm Rad}(X)}
\,\leq\, K\norm{x}\,\Bignorm{\Bigl(\sum_{j=1}^n\vert F_j\vert^2\Bigr)^{\frac12}}_{\infty,
\Sigma_{\nu_1}\times\cdots\times\Sigma_{\nu_d}},
$$
for any $n\geq 1$, for any $F_1,\ldots,F_n$
in $H^\infty_0(\Sigma_{\nu_1}\times\cdots\times\Sigma_{\nu_d})$ 
and for any $x\in X$.
\end{proposition}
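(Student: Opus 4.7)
The plan is to adapt the single-variable argument of \cite[Theorem 6.3]{LM} to the multivariable setting, using the Franks--McIntosh decomposition machinery from Section \ref{Angle} in a tensorial fashion. Write $\A=(A_1,\ldots,A_d)$ and $M=\bignorm{(\sum_j|F_j|^2)^{\frac12}}_{\infty,\Sigma_{\nu_1}\times\cdots\times\Sigma_{\nu_d}}$. Since $X$ has finite cotype, the equivalence \eqref{Equiv} reduces matters to proving the Gaussian estimate
$$
\Bignorm{\sum_{j=1}^n g_j\otimes F_j(\A)x}_{G(X)}\,\lesssim\, M\,\norm{x}.
$$

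First I would fix intermediate angles $\mu_k\in(\theta_k,\nu_k)$, $k=1,\ldots,d$, and apply Lemma \ref{FM-1} variable by variable. Since the construction of \cite{FMI} is linear, it extends verbatim to the $\ell^2_n$-valued bounded holomorphic function $F=(F_1,\ldots,F_n)$ and yields an expansion
$$
F_j(z_1,\ldots,z_d)\,=\,\sum_{\vec{I}}\alpha^{(j)}_{\vec{I}}\,\Phi_{\vec{I}}(z_1,\ldots,z_d),\qquad \Phi_{\vec{I}}=\phi_{I_1}\otimes\cdots\otimes\phi_{I_d},
$$
where each $\phi_{I_k}\in H^\infty_0(\Sigma_{\mu_k})$ is a one-variable FMcI atom and the coefficient vectors $\alpha_{\vec{I}}=(\alpha^{(j)}_{\vec{I}})_j\in\ell^2_n$ satisfy $\norm{\alpha_{\vec{I}}}_{\ell^2_n}\lesssim M$ uniformly in $\vec{I}$. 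The crucial refinement is then an operator-norm bound on the coefficient matrix, namely that $[\alpha^{(j)}_{\vec{I}}]$ defines a bounded map from $\ell^2(\vec{I})$ to $\ell^2_n$ with norm $\lesssim M$. By duality this reduces to showing $\sum_{\vec{I}}\vert\langle\alpha_{\vec{I}},v\rangle\vert^2\lesssim M^2\norm{v}^2$ for $v\in\ell^2_n$; since $\langle\alpha_{\vec{I}},v\rangle$ is the $\vec{I}$-th FMcI coefficient of the scalar test function $\sum_j\overline{v_j}F_j\in H^\infty(\Sigma_{\mu_1}\times\cdots\times\Sigma_{\mu_d})$ (whose $\infty$-norm is $\leq M\norm{v}$), this amounts to an $\ell^2$-frame estimate for the multivariable FMcI analysis operator, obtained by iterating the one-variable estimates underlying Lemma \ref{FM-1}.

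Given this operator-norm bound, the rectangular version of \eqref{Rot} applied to $[\alpha^{(j)}_{\vec{I}}]$ gives
$$
\Bignorm{\sum_j g_j\otimes F_j(\A)x}_{G(X)}\,\lesssim\, M\,\Bignorm{\sum_{\vec{I}}\widetilde{g}_{\vec{I}}\otimes\Phi_{\vec{I}}(\A)x}_{G(X)},
$$
with $(\widetilde{g}_{\vec{I}})$ a fresh independent Gaussian family. To bound the right-hand side by $\norm{x}$, I would pass back to Rademacher averages via \eqref{Equiv} and invoke Kahane's contraction principle, reducing to the uniform estimate
$$
\Bignorm{\sum_{\vec{I}\in S}\epsilon_{\vec{I}}\,\Phi_{\vec{I}}(\A)}_{B(X)}\,\lesssim\,1
$$
over signs $\epsilon_{\vec{I}}=\pm 1$ and finite $S$. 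This follows from the bounded $H^\infty(\Sigma_{\mu_1}\times\cdots\times\Sigma_{\mu_d})$ joint functional calculus of $\A$ (which holds since $\mu_k>\theta_k$), applied to the scalar function $\sum_{\vec{I}\in S}\epsilon_{\vec{I}}\Phi_{\vec{I}}$: its uniform $L^\infty$-norm on the product of sectors is controlled by the pointwise $\ell^1$-decay from Lemma \ref{FM-1}(1) tensored across the $d$ variables.

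The main obstacle is the operator-norm bound on the FMcI analysis matrix in the second step. This is the multivariable incarnation of the frame-type estimate that lies at the technical heart of the one-variable result \cite[Theorem 6.3]{LM}; its verification requires the precise structure of the FMcI atoms---particularly the $\ell^1$-decay of Lemma \ref{FM-1}(1) and the auxiliary bound of Lemma \ref{FM-1}(3)---tensored correctly across all $d$ variables, exploiting the product structure of both the sectors $\Sigma_{\mu_1}\times\cdots\times\Sigma_{\mu_d}$ and the atoms $\Phi_{\vec{I}}$.
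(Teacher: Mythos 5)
Your overall skeleton (reduce to Gaussians, expand $F=(F_1,\ldots,F_n)$ via a tensorized Franks--McIntosh decomposition, finish with the uniform bound $\bignorm{\sum_{\vec I\in S}\epsilon_{\vec I}\Phi_{\vec I}(\A)}\lesssim 1$ coming from the joint calculus and the pointwise $\ell^1$ decay) is sound, but the step you yourself flag as crucial is not just unproven -- it is false. The Franks--McIntosh construction gives only an $\ell^\infty$ bound on the coefficients, $\sup_{\vec I}\norm{\alpha_{\vec I}}_{\ell^2_n}\lesssim M$, and no $\ell^2$ frame estimate of the form $\sum_{\vec I}\vert\alpha_{\vec I}(h)\vert^2\lesssim\norm{h}_\infty^2$ can hold for scalar $h$, even in one variable. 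Indeed, test it on $h=\Phi_m$ from (\ref{Phi_m}): these have uniformly bounded sup-norms, but any representation $\Phi_m=\sum_I\alpha^{(m)}_I\phi_I$ with $\norm{\alpha^{(m)}}_{\ell^2}\leq K$ uniformly would, after passing to a weak limit in $\ell^2$ (pointwise evaluation is pairing with the square-summable vector $(\phi_I(z))_I$, by Lemma \ref{FM-1}(1)), yield $1=\sum_I\alpha_I\phi_I(z)$ on $\Sigma_\mu$ with $\alpha\in\ell^2$; letting $z\to 0$ and using Cauchy--Schwarz against the decay of Lemma \ref{FM-1}(1) forces the right-hand side to $0$, a contradiction. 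Concretely, the natural coefficients of $\Phi_m$ have $\ell^2$ norm growing like $\sqrt{\log m}$, because $\Phi_m\approx 1$ over about $\log m$ scales. So the operator-norm bound on $[\alpha^{(j)}_{\vec I}]\colon\ell^2(\vec I)\to\ell^2_n$, and with it your appeal to the rectangular version of (\ref{Rot}), collapses; this is the heart of the proof and it cannot be repaired along these lines.

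The paper's route (following \cite[Theorem 6.3]{LM}) avoids exactly this obstruction. It uses the multivariable Franks--McIntosh decomposition in the split form stated before the proposition: the atoms are products $\varphi_{1,i_1}\psi_{1,i_1}\cdots\varphi_{d,i_d}\psi_{d,i_d}$ of \emph{two} families, each with pointwise $\ell^1$ bounds, and the $\ell^2_n$-valued coefficients $y_{\vec I}$ are only uniformly bounded by $CM$. The missing matrix bound is then replaced by the Kaiser--Weis result \cite[Corollary 3.4]{KW}, which is where finite cotype genuinely enters: it lets one dominate the $\gamma(\ell^2_n;X)$-norm of $\sum_{\vec I}(y_{\vec I}/CM)\otimes\varphi_{\vec I}(\A)\psi_{\vec I}(\A)x$ (which is exactly the Gaussian average you want to bound) by the Gaussian sum $\bignorm{\sum_{\vec I}g_{\vec I}\otimes\varphi_{\vec I}(\A)\psi_{\vec I}(\A)x}_{G(X)}$, with no control of the coefficient matrix as an operator. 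The splitting of each atom into $\varphi\cdot\psi$ is then essential for the last step: a double randomization (writing $\sum_{\vec I}r_{\vec I}\otimes\varphi_{\vec I}(\A)u_{\vec I}$ as an average of $\bigl(\sum_{\vec J}r_{\vec J}(s)\varphi_{\vec J}(\A)\bigr)\sum_{\vec I}r_{\vec I}(s)r_{\vec I}\otimes u_{\vec I}$) reduces everything to the two uniform bounds $\bignorm{\sum\epsilon_{\vec I}\varphi_{\vec I}(\A)}\lesssim 1$ and $\bignorm{\sum\epsilon_{\vec I}\psi_{\vec I}(\A)}\lesssim 1$ furnished by the joint $H^\infty$ calculus. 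In your version, with a single atom per index and only $\ell^\infty$ coefficient bounds, neither the Gaussian rotation step nor such a randomization argument is available.
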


The proof of  \cite[Theorem 6.3]{LM} (which corresponds to
Proposition \ref{Quad} 
in the case $d=1$) relies on the 
Franks-McIntosh decomposition stated as \cite[Theorem H.3.1]{BOOK}.
The latter is a simple consequence of Lemma \ref{FM-1}
(and its proof).
In the multivariable
case, the following version of the Franks-McIntosh decomposition
holds true: for any $0<\theta_k<\nu_k<\pi$, $k=1,\ldots,d$,
there exist sequences $(\varphi_{k,i})_{i\geq 1}$
and $(\psi_{k,i})_{i\geq 1}$ in $H_0^\infty(\Sigma_{\theta_k})$, as well as 
a constant $C\geq 0$, such that:
\begin{itemize}
\item [(a)] For any $k=1,\ldots,d$, and for any $z\in \Sigma_{\theta_k}$,
$$
\sum_{i=1}^\infty\vert\varphi_{k,i}(z)\vert\,\leq\,C
\qquad\hbox{and}\qquad 
\sum_{i=1}^\infty\vert\psi_{k,i}(z)\vert\,\leq\,C;
$$
\item [(b)] For any Banach space $Y$ and for any $F\in H^\infty(
\Sigma_{\nu_1}\times\cdots\times\Sigma_{\nu_d};Y)$, there exists
a bounded family $(y_{i_1,\ldots,i_d})_{i_1,\ldots,i_d\geq 1}$
in $Y$ such that 
$$
\norm{y_{i_1,\ldots,i_d}}\leq C\,\norm{F}_{\infty, 
\Sigma_{\nu_1}\times\cdots\times\Sigma_{\nu_d}},\qquad i_1,\ldots,i_d\geq 1,
$$
and for any $(z_1,\ldots,z_d)\in \Sigma_{\theta_1}
\times\cdots\times\Sigma_{\theta_d}$,
$$
F(z_1,\ldots,z_d)\,=\,\sum_{i_1,\ldots, i_d =1}^{\infty} 
y_{i_1,\ldots,i_d}\,\varphi_{1,i_1}(z_1)\psi_{1,i_1}(z_1)\cdots
\varphi_{d,i_d}(z_d)\psi_{d,i_d}(z_d).
$$
\end{itemize}
This well-known extension of \cite[Theorem H.3.1]{BOOK}
is implicit in \cite[Section 4]{FMI}.

Using this decomposition, the proof of Proposition \ref{Quad}
is a repetition of the proof of \cite[Theorem 6.3]{LM}, so we skip it.
In the latter, the finite cotype assumption allows to apply
\cite[Corollary 3.4]{KW}, which plays a crucial role.

\begin{proof}[Proof of Theorem \ref{H-SFE}]
We may and do assume that 
$F\in H^\infty_{0}(\Sigma_{\nu_1}\times\cdots\times\Sigma_{\nu_d})$.
If $x\in N(A_k)$ for some $k\in\{1,\ldots,d\}$, then 
$F(t_1A_1,\ldots, t_d A_d)x=0$ 
for any $t=(t_1,\ldots,t_d)\in\Omega_0^d$,
hence $\norm{x}_F=0$. It therefore follows from
Lemma \ref{Ergodic2} that to prove an estimate (\ref{SFE}),
we may assume that
\begin{equation}\label{x}
x\in \overline{R(A_1)}\cap\cdots\cap \overline{R(A_d)}.
\end{equation}

Recall (\ref{mes}).
Let $(e_j)_{j\geq 1}$ be an orthonormal sequence
of $L^2(\Omega_0^d)$. 
For any
$j\geq 1$, define a function 
$F_j\colon \Sigma_{\nu_1}\times \cdots\times 
\Sigma_{\nu_d}\to\Cdb$ by
$$
F_j(z_1,\ldots,z_d)= \int_{\Omega_0^d}
F(t_1 z_1,\ldots,t_d z_d)\,e_j(t)\,
dM(t)\,.
$$
It is clear that $F_j\in H^\infty(\Sigma_{\nu_1}
\times\cdots\times
\Sigma_{\nu_d})$. (Note that $F_j$ does not necessarily belong
to $H^\infty_{0}(\Sigma_{\nu_1}
\times\cdots\times
\Sigma_{\nu_d})$; 
this is why we need the approximation process
below.)

For any integer $m\geq 1$, consider $\Phi_m$
and $\Phi_m^{\otimes d}$
defined by (\ref{Phi_m}) 
and (\ref{Phi_m^d}). Then
$F_j\Phi_m^{\otimes d}$ belongs to $H^\infty_{0}(\Sigma_{\nu_1}
\times\cdots\times
\Sigma_{\nu_d})$ for any $j\geq 1$.

The argument in the proof of \cite[Lemma 6.5]{JLX} shows that 
the function 
$$
t\mapsto\, F(t_1 A_1,\ldots,t_d A_d)\Phi_m(A_1)\cdots\Phi_m(A_d)
$$
belongs to $L^2(\Omega_0^d;B(X))$. Then for any
$j\geq 1$,
the function 
$$
t\mapsto F(t_1 A_1,\ldots,t_d A_d)\Phi_m(A_1)\cdots\Phi_m(A_d) e_j(t)
$$ 
belongs to $L^1(\Omega_0^d;B(X))$ and 
by Fubini's theorem,
we obtain that
\begin{equation}\label{IntRep}
(F_j\Phi_m^{\otimes d})(A_1,\ldots,A_d)\,=\,
\int_{\Omega_0^d}
F(t_1 A_1,\ldots,t_d A_d)\Phi_m(A_1)\cdots\Phi_m(A_d)
\, e_j(t)\,
dM(t)\,.
\end{equation}
According to Proposition \ref{Quad} and to the finite cotype
assumption, there is a constant $K$
such that
\begin{equation}\label{n}
\Bignorm{\sum_{j=1}^n g_j\otimes (F_j\Phi_m^{\otimes d})
(A_1,\ldots,A_d)x}_{{\rm G}(X)}
\,\leq\, K\norm{x}\,\Bignorm{\Bigl(\sum_{j=1}^n\vert F_j\Phi_m^{\otimes d}\vert^2\Bigr)^{\frac12}}_{\infty,
\Sigma_{\nu_1}\times\cdots\times\Sigma_{\nu_d}},
\end{equation}
for any $n\geq 1$, $m\geq 1$ and $x\in X$.
Let $(z_1,\ldots,z_d)\in \Sigma_{\nu_1}\times\cdots\times\Sigma_{\nu_d}$. 
Since $(e_j)_{j\geq 1}$ is an orthonormal family of
$L^2(\Omega^d_0)$, we have
$$
\Bigl(\sum_{j=1}^n \vert F_j(z_1,\ldots,z_d)
\vert^2\Bigr)^\frac12
\,\leq \Bigl(\int_{\Omega_0^d}
\vert F(t_1 z_1,\ldots,t_d z_d)\vert^2\,
dM(t)\,\Bigr)^\frac12.
$$
By assumption, $F\in H^\infty_0(\Sigma_{\nu_1}\times
\cdots\times\Sigma_{\nu_d})$ hence by (\ref{Hinfini0}) 
there exist real numbers $C\geq 0$ and
$s_1,\ldots,s_d >0$ (not depending on $z_1,\ldots,z_d)$
such that
\begin{align*}
\Bigl(\sum_{j=1}^n \vert F_j(z_1,\ldots,z_d)\vert^2\Bigr)^\frac12
\,& \leq C \Bigl(\int_{\Omega_0^d}
\prod_{k=1}^d \dfrac{|t_k z_k|^{s_k}}{(1+|t_kz_k|^{s_k})^2}\,
dM(t)\,\Bigr)^\frac12\\
& = C\prod_{k=1}^d
\Bigl(\int_{0}^\infty
\dfrac{|t_k z_k|^{s_k}}{(1+|t_kz_k|^{s_k})^2}\,
\frac{dt_k}{t_k}\,\Bigr)^\frac12.
\end{align*}
A standard calculation shows
that the right-hand side 
of the above inequality
is bounded on $\Sigma_{\nu_1}\times\cdots\times\Sigma_{\nu_d}$.
Moreover the sequence $(\phi^{\otimes d}_m)_{m\geq 1}$ is 
uniformly bounded
on $\Sigma_{\nu_1}\times\cdots\times\Sigma_{\nu_d}$. 
Implementing these 
estimates in (\ref{n}) we obtain a constant $K'>0$ such that
$$
\Bignorm{\sum_{j=1}^n g_j\otimes (F_j\Phi_m^{\otimes d})(A_1,\ldots,A_d)x}_{{\rm G}(X)}
\,\leq\, K'\norm{x},\qquad x\in X,
$$
for all $n,m\geq 1$.

Since $X$ has finite cotype, it does not contain $c_0$.
Hence according to (\ref{IntRep}) and Lemma \ref{Bessel}, the
above estimate shows that 
the function 
$$
(t_1,\ldots,t_d)\mapsto F(t_1A_1,\ldots,t_d A_d)
\Phi_m(A_1)\cdots\Phi_m(A_d)x
$$ 
belongs to $\gamma(\Omega_0^d;X)$,
and that its norm in the space
$\gamma(L^2(\Omega_0^d);X)$  is $\leq K'\norm{x}$.

Now assume that $x$ satifies (\ref{x}). Applying (\ref{Approx})
to $A_k$ for any $k=1,\ldots,d$, we obtain that 
$\Phi_m(A_1)\cdots\Phi_m(A_d)x\to x$ when $m\to\infty$.
It therefore follows from \cite[Lemma 4.10 (b)]{KW2} that 
the function 
$(t_1,\ldots,t_d)\mapsto F(t_1A_1,\ldots,t_d A_d)
x$ belongs to $\gamma(\Omega_0^d;X)$
and that its norm in the space
$\gamma(L^2(\Omega_0^d);X)$ is $\leq K'\norm{x}$.
This proves the square function estimate.
\end{proof}

It is well-known that if $X$ is reflexive, then
each $A_k^*$ is a sectorial operator
on $X^*$. It is plain to see that if $(A_1,\ldots,A_d)$ admits an
$H^\infty(\Sigma_{\theta_1}\times\cdots\times\Sigma_{\theta_d})$ joint functional
calculus, then $(A_1^*,\ldots,A_d^*)$ admits an
$H^\infty(\Sigma_{\theta_1}\times\cdots\times\Sigma_{\theta_d})$ joint functional
calculus as well. Indeed for any $f\in 
H^\infty_{0,1}(\Sigma_{\theta_1}\times\cdots\times\Sigma_{\theta_d})$,
\begin{equation}\label{adjoint}
f(A_1^*,\ldots,A_d^*)\,=\,\widetilde{f}(A_1,\ldots,A_d)^*,
\end{equation}
where $\widetilde{f}\in 
H^\infty_{0,1}(\Sigma_{\theta_1}\times\cdots\times\Sigma_{\theta_d})$ is defined 
by $\widetilde{f}(z_1,\ldots,z_d)=\overline{f(\overline{z_1},
\ldots,\overline{z_d})}$.

It therefore follows from Theorem \ref{H-SFE} that when $X$ is reflexive
and $X$ and $X^*$ both have finite cotype, we have the following
result:
if
$(A_1,\ldots,A_d)$ admits an
$H^\infty(\Sigma_{\theta_1}\times\cdots\times\Sigma_{\theta_d})$ joint functional
calculus, then both $(A_1,\ldots,A_d)$ and $(A_1^*,\ldots,A_d^*)$
admit square function estimates with respect to any 
$F\in H^\infty_{0,1}(\Sigma_{\nu_1}\times\cdots\times\Sigma_{\nu_d})$, with
$\nu_k\in(\theta_k,\pi)$.

In the single case, Theorem \ref{H-SFE} 
has various converse statements. These
statements all assert that under certain assumptions, some
square function estimates for $A$ and $A^*$ imply
that $A$ has an $H^\infty$-functional calculus.
The next statement, involving $R$-sectoriality,
is essentially a multivariable version 
of \cite[Theorem 10.4.9]{BOOK}.
Theorem \ref{SFE-H} will not be used in the next section
and is given for the sake of completeness.

\begin{theorem}\label{SFE-H}
Assume that $X$ is reflexive with finite
cotype, and  that for any $k=1,\ldots,d$,
$A_k$ is $R$-sectorial of $R$-type $\omega_k\in(0,\pi)$
and has dense range. Assume further that there  
exist non zero
functions $F_1,F_2\in 
H^\infty_{0}(\Sigma_{\nu_1}\times\cdots\times\Sigma_{\nu_d})$, 
for some $\nu_k\in(\omega_k,\pi)$, 
such that $(A_1,\ldots,A_d)$ admits a square
function estimate with respect to $F_1$ and 
$(A_1^*,\ldots, A_d^*)$ admits a square 
function estimate with respect to $F_2$. 
Then for any $\theta_k\in(\omega_k,\pi)$,
$k=1,\ldots,d$, $(A_1,\ldots,A_d)$ admits an
$H^\infty(\Sigma_{\theta_1}\times\cdots\times\Sigma_{\theta_d})$ 
joint functional
calculus. 
\end{theorem}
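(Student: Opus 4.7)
The plan is to adapt the one-variable argument of \cite[Theorem 10.4.9]{BOOK} to the joint setting, using the multivariable Franks-McIntosh decomposition together with a Cauchy-Schwarz step in $\gamma$-duality. First, I would reduce the problem: since each $A_k$ is $R$-sectorial, Theorem \ref{d-KW} allows me to fix a single convenient set of angles with $\omega_k<\theta_k<\nu_k$. The dense range of each $A_k$ combined with Lemma \ref{DenseRange} reduces the required estimate to test functions $f\in H^\infty_0(\Sigma_{\theta_1}\times\cdots\times\Sigma_{\theta_d})$, and by Lemma \ref{Ergodic2} I may restrict attention to $x\in\bigcap_{k}\overline{R(A_k)}$. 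Reflexivity together with finite cotype of $X$ ensures, through Pisier's $K$-convexity theorem, the $\gamma$-duality $\gamma(L^2(\Omega_0^d);X)^*\simeq\gamma(L^2(\Omega_0^d);X^*)$, together with the $X^*$-side information needed to apply the Gaussian-Rademacher equivalence (\ref{Equiv}).

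Next, I would apply the multivariable Franks-McIntosh decomposition recalled after Proposition \ref{Quad} to write
$$
f(z_1,\ldots,z_d)=\sum_{i_1,\ldots,i_d\geq 1}\alpha_{i_1,\ldots,i_d}\,\prod_{k=1}^d\varphi_{k,i_k}(z_k)\,\psi_{k,i_k}(z_k),
$$
with $\sup_{i_1,\ldots,i_d}|\alpha_{i_1,\ldots,i_d}|\leq C\|f\|_{\infty,\Sigma_{\theta_1}\times\cdots\times\Sigma_{\theta_d}}$ and atoms $\varphi_{k,i},\psi_{k,i}\in H^\infty_0(\Sigma_{\theta_k})$ concentrated at dyadic scales, chosen as dilates of generators compatible with $F_1$ and $F_2$ respectively. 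Applying the joint functional calculus term by term and pairing with $y\in X^*$, the commutation of $\psi_{k,i_k}(A_k)$ with $\varphi_{j,i_j}(A_j)$ gives
$$
\langle f(A_1,\ldots,A_d)x,y\rangle=\sum_{i_1,\ldots,i_d}\alpha_{i_1,\ldots,i_d}\,\Bigl\langle\prod_k\psi_{k,i_k}(A_k)x,\ \prod_k\varphi_{k,i_k}(A_k)^*y\Bigr\rangle.
$$
Kahane's contraction principle (to absorb the bounded coefficients $\alpha_{i_1,\ldots,i_d}$) combined with the $\gamma$-duality between random sums then yields
$$
|\langle f(A_1,\ldots,A_d)x,y\rangle|\lesssim\|f\|_\infty\,\Bignorm{\sum_{i_1,\ldots,i_d}g_{i_1,\ldots,i_d}\otimes\prod_k\psi_{k,i_k}(A_k)x}_{G(X)}\Bignorm{\sum_{i_1,\ldots,i_d}g_{i_1,\ldots,i_d}\otimes\prod_k\varphi_{k,i_k}(A_k)^*y}_{G(X^*)}.
$$

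To finish I would transfer each discrete Gaussian sum to the continuous $\gamma$-norm appearing in the square function hypothesis. Via Lemma \ref{Bessel} and a dyadic averaging argument, the two factors above are equivalent, up to absolute constants coming from the atomic scale structure, to $\|F_1(\cdot A)x\|_{\gamma(\Omega_0^d;X)}$ and $\|F_2(\cdot A^*)y\|_{\gamma(\Omega_0^d;X^*)}$; by the SFE hypotheses these are bounded by $\|x\|$ and $\|y\|$, which yields $\|f(A_1,\ldots,A_d)\|\lesssim\|f\|_\infty$ upon taking the supremum over $y$ in the unit ball of $X^*$. The main obstacle is the discrete-to-continuous transfer in the joint setting: the tensor-product indexing $(i_1,\ldots,i_d)$ must align with the product measure on $\Omega_0^d$ in a way that is compatible with the $\gamma$-norm of $X$- and $X^*$-valued kernels. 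This step relies on the joint $R$-boundedness of products of resolvents from \cite[Proposition 8.1.19]{BOOK}, which is the genuinely multivariable ingredient beyond the one-variable argument of \cite[Theorem 10.4.9]{BOOK}.
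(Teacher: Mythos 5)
Your outline (Cauchy--Schwarz against $y\in X^*$, $R$-boundedness as the genuinely multivariable input, finite cotype to get $\gamma$-duality, and Lemma~\ref{DenseRange} together with Lemma~\ref{Ergodic2} to reduce to $H^\infty_0$ and to $x\in\bigcap_k\overline{R(A_k)}$) is the right shape of argument, but the central step is missing and, as written, the decomposition you rely on does not produce the objects you need. The multivariable Franks--McIntosh atoms $\varphi_{k,i},\psi_{k,i}$ recalled after Proposition~\ref{Quad} are \emph{specific} functions built from the family $(\phi_{m,k,j})$ of Lemma~\ref{FM-1}; only the coefficients $y_{i_1,\ldots,i_d}$ depend on $f$. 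You do not have the freedom to take the atoms to be ``dilates of generators compatible with $F_1$ and $F_2$.'' Consequently, the two discrete Gaussian sums $\bignorm{\sum g_{i_1,\ldots,i_d}\otimes\prod_k\psi_{k,i_k}(A_k)x}_{G(X)}$ and $\bignorm{\sum g_{i_1,\ldots,i_d}\otimes\prod_k\varphi_{k,i_k}(A_k)^*y}_{G(X^*)}$ bear no a priori relation to the continuous quantities $\norm{t\mapsto F_1(t\A)x}_{\gamma(\Omega_0^d;X)}$ and $\norm{t\mapsto F_2(t\A^*)y}_{\gamma(\Omega_0^d;X^*)}$ in your hypotheses. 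Lemma~\ref{Bessel} characterizes the $\gamma$-norm as a supremum over orthonormal families $(e_j)$ of $L^2(\Omega_0^d)$; to invoke it you would need an orthonormal family with $\int_{\Omega_0^d} F_1(t\A)x\,e_{i_1,\ldots,i_d}(t)\,dM(t)=\prod_k\psi_{k,i_k}(A_k)x$, and the Franks--McIntosh atoms supply nothing of the sort. You flag the ``discrete-to-continuous transfer'' as the main obstacle but offer no mechanism to carry it out; that is precisely the gap.

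The paper circumvents this by never discretizing. Since $F_1,F_2\neq 0$ one chooses $\Psi\in H^\infty_0(\Sigma_{\nu_1}\times\cdots\times\Sigma_{\nu_d})$ with $\int_{\Omega_0^d}\Psi(t)F_1(t)\widetilde{F_2}(t)\,dM(t)=1$, where $\widetilde{F_2}$ is as in~(\ref{adjoint2}). By analytic continuation and Fubini this yields a \emph{continuous} reproducing formula $f(\A)=\int_{\Omega_0^d}f(\A)\Psi(t\A)F_1(t\A)\widetilde{F_2}(t\A)\,dM(t)$ directly at the operator level. The $R$-sectoriality assumption is then used, exactly as in the proof of Theorem~\ref{d-KW}, to show that $\bigl\{f(\A)\Psi(t\A):t\in\Rdb_+^{*d}\bigr\}$ is $R$-bounded with constant $\lesssim\norm{f}_\infty$; the $\gamma$-multiplier theorem~\cite[Theorem 9.5.1]{BOOK} upgrades the square function estimate $\norm{t\mapsto F_1(t\A)x}_\gamma\lesssim\norm{x}$ to $\norm{t\mapsto f(\A)\Psi(t\A)F_1(t\A)x}_\gamma\lesssim\norm{f}_\infty\norm{x}$, and trace duality on $\gamma$-spaces against $\norm{t\mapsto F_2(t\A^*)y}_\gamma\lesssim\norm{y}$ closes the argument. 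Proposition~\ref{Quad} and its Franks--McIntosh machinery belong to the proof of Theorem~\ref{H-SFE} (functional calculus $\Rightarrow$ square functions), not to this converse. If you insist on a discrete route, you would first have to promote the single continuous square function estimate to a whole family of estimates compatible with the Franks--McIntosh atoms, and that promotion is again an $R$-boundedness/$\gamma$-multiplier argument---i.e.\ you would be re-deriving the step you skipped.
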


\begin{proof} We use classical arguments so we will be deliberately brief.
As in the proof of Theorem \ref{d-KW}, we set  $\A=(A_1,\ldots,A_d)$
and we write $F(t\A)$ instead of $F(t_1A_1,\ldots,t_dA_d)$
for any $t=(t_1,\ldots,t_d)\in\Rdb_+^{* d}$, whenever $F
\in H_0^\infty(\Sigma_{\theta_1}\times\cdots\times\Sigma_{\theta_d})$, with
$\theta_k\in(\omega_k,\pi)$.

We may assume that $\theta_k\leq\nu_k$ for any $k=1,\ldots,d$. 
Define 
$\widetilde{F_2}\in
H^\infty_{0}(\Sigma_{\nu_1}\times\cdots\times\Sigma_{\nu_d})$
so that
\begin{equation}\label{adjoint2}
\widetilde{F_2}(A_1,\ldots,A_d)^*\,=\, F_2(A_1^*,\ldots,A_d^*),
\end{equation}
see (\ref{adjoint}).
Since $F_1,F_2$ are non zero, we can find $\Psi\in
H^\infty_{0}(\Sigma_{\nu_1}\times\cdots\times\Sigma_{\nu_d})$ such that 
$$
\int_{\Omega_0^d} \Psi(t)F_1(t)\widetilde{F_2}(t)\, dM(t)\, =1.
$$
By analytic continuation, this implies that for any $(z_1,\ldots,z_d)
\in \Sigma_{\theta_1}\times\cdots\times\Sigma_{\theta_d}$, we have
$$
\int \Psi(t_1z_1,\ldots,t_dz_d)F_1(t_1z_1,\ldots,t_dz_d)
\widetilde{F_2}(t_1z_1,\ldots,t_dz_d)\, dM(t)\, =1.
$$
Fix 
$f\in H^\infty_0(\Sigma_{\theta_1}\times\cdots
\times\Sigma_{\theta_d})$. The argument in the proof of 
\cite[Lemma 6.5 (1)]{JLX} shows that 
the function
$t\mapsto f(\A)\Psi(t\A)F_1(t\A)\widetilde{F_2}(t\A)$ 
belongs to $L^1(\Omega_0^d; B(X))$ and by Fubini's theorem,
\begin{equation}\label{Ident}
\int_{\Omega_0^d} 
f(\A)\Psi(t\A)F_1(t\A)\widetilde{F_2}(t\A)\, dM(t)\, =f(\A).
\end{equation}
Arguing as in the proof of  Theorem \ref{d-KW}, we may deduce 
from the assumption that $A_k$ is $R$-sectorial
of $R$-type $\omega_k$ that the set
$$
\F_f\, :=\,\bigl\{f(\A)\Psi(t\A)\, :\, t \in 
\Rdb_+^{* d}\bigr\}
$$
is $R$-bounded, with an estimate 
\begin{equation}\label{RFf}
\R(\F_f)\lesssim \norm{f}_{\infty,
\Sigma_{\theta_1}\times\cdots\times\Sigma_{\theta_d}}.
\end{equation}

We assumed that 
$(A_1,\ldots,A_d)$ admits a square
function estimate with respect to $F_1$. Thus for any
$x\in X$, $t\mapsto F_1(t\A)x$ belongs to $\gamma(\Omega_0^d;X)$, 
with an estimate
$$
\norm{x}_{F_1}
= \norm{t\mapsto F_1(t\A)x}_{\gamma(\Omega_0^d;X)}\,\lesssim\,\norm{x}.
$$
Since $X$ has finite cotype, it therefore
follows from (\ref{RFf}) and the so-called
$\gamma$-multiplier theorem (see \cite[Theorem 9.5.1]{BOOK})
that the function $t\mapsto f(\A)\Psi(t\A)F_1(t\A)x$ 
belongs to $\gamma(\Omega_0^d;X)$
for any $x\in X$, with an estimate
\begin{equation}\label{Est1}
\norm{t\mapsto f(\A)\Psi(t\A)F_1(t\A)x}_{\gamma(\Omega_0^d;X)}\,
\lesssim\,\norm{f}_{\infty,
\Sigma_{\theta_1}\times\cdots\times\Sigma_{\theta_d}}\norm{x}.
\end{equation}
Furthermore $(A_1^*,\ldots,A_d^*)$ admits a square
function estimate with respect to $F_2$ hence for any
$y\in X^*$, the function
$t\mapsto F_2(t\A^*)y$ belongs to $\gamma(\Omega_0^d;X^*)$, with an estimate
\begin{equation}\label{Est2}
\norm{y}_{F_2} = 
\norm{t\mapsto F_2(t\A^*)y}_{\gamma(\Omega_0^d;X^*)}\,\lesssim\,\norm{y}.
\end{equation}
Let us now use the identities (\ref{Ident}) and (\ref{adjoint2}). 
Applying the so-called trace duality theorem on $\gamma$-spaces (see 
\cite[Theorem 9.2.14]{BOOK}), the estimates (\ref{Est1}) and
(\ref{Est2}) imply that 
\begin{align*}
\bigl\vert\langle  f(\A)x,y\rangle\bigr\vert\,
& =\, \Bigl\vert
\int_{\Omega_0^d} 
\bigl\langle f(\A)\Psi(t\A)F_1(t\A)x, F_2(t\A^*)y\bigr\rangle\, dM(t)\Bigr\vert\\
& \leq \,
\bignorm{t\mapsto \Psi(t\A)F_1(t\A)x}_{\gamma(\Omega_0^d;X)}
\, \bignorm{t\mapsto F_2(t\A)^*y}_{\gamma(\Omega_0^d;X^*)}\\
&\lesssim 
\norm{f}_{\infty,
\Sigma_{\theta_1}\times\cdots\times\Sigma_{\theta_d}}\norm{x}\norm{y},
\end{align*}
for $x\in X, y\in X^*$. This yields an estimate
$$
\norm{f(\A)}\,\lesssim\, \norm{f}_{\infty,
\Sigma_{\theta_1}\times\cdots\times\Sigma_{\theta_d}}
$$
for $f\in H^\infty_0(\Sigma_{\theta_1}\times\cdots
\times\Sigma_{\theta_d})$. By Lemma \ref{DenseRange}
and the dense range assumption, this shows
that 
$(A_1,\ldots,A_d)$ admits an
$H^\infty(\Sigma_{\theta_1}\times\cdots\times\Sigma_{\theta_d})$ 
joint functional
calculus. 
\end{proof}

\begin{remark}\label{Converse} 
\ 

\smallskip
(1) 
The following variant of Theorem \ref{SFE-H} (without any dense range
assumption) is easy to
deduce from the above proof: Assume that $X$ is reflexive with finite
cotype, and  that for any $k=1,\ldots,d$,
$A_k$ is $R$-sectorial of $R$-type $\omega_k\in(0,\pi)$.
Let $\nu_k\in (\omega_k,\pi)$ for any $k=1,\ldots,d$, and
assume that for any $F\in H_{0,1}^\infty(
\Sigma_{\nu_1}\times\cdots\times\Sigma_{\nu_d})$, 
$(A_1,\ldots,A_d)$ and $(A_1^*,\ldots, A_d^*)$ both admit a square
function estimate with respect to $F$. 
Then for any $\theta_k\in(\omega_k,\pi)$,
$k=1,\ldots,d$, $(A_1,\ldots,A_d)$ admits an
$H^\infty(\Sigma_{\theta_1}\times\cdots\times\Sigma_{\theta_d})$ 
joint functional calculus. 

\smallskip
(2) Combining the above part (1) and Theorem \ref{H-SFE},
we obtain a new proof of Theorem \ref{d-KW} in the case when 
$X$ is reflexive and both $X$ and $X^*$ have finite cotype.
\end{remark}

We refer the reader to \cite[p. 325]{KW1} for the definition of
property $(\Delta)$. The latter is called `triangular contraction
property' in \cite{BOOK}, see Definition 7.5.7 in the latter reference.
We recall that any space with $(\Delta)$ has finite cotype.
It is proved in \cite[Theorem 5.3]{KW1} that if $A$ is a sectorial operator 
with an $H^\infty(\Sigma_\theta)$ functional calculus on
$X$ with property $(\Delta)$, then $A$ is $R$-sectorial 
of $R$-type $\theta$. Combining that result with 
Theorem \ref{H-SFE} and Remark \ref{Converse} (1), we 
obtain the following.

\begin{corollary} Assume that $X$ is a reflexive Banach space with 
property $(\Delta)$ and that $X^*$ has finite cotype. 
The following are equivalent.
\begin{itemize}
\item [(i)] For any $\theta_k\in(\omega_k,\pi)$, $k=1,\ldots,d$,
$(A_1,\ldots,A_d)$ admits an $H^\infty(\Sigma_{\theta_1}\times\cdots\times\Sigma_{\theta_d})$ 
joint functional calculus.
\item [(ii)] Each $A_k$ is $R$-sectorial of $R$-type $\omega_k$
and for any $\theta_k\in(\omega_k,\pi)$, $k=1,\ldots,d$,
and any $F\in H_{0,1}^\infty(\Sigma_{\theta_1}
\times\cdots\times\Sigma_{\theta_d})$,
$(A_1,\ldots,A_d)$ and $(A_1^*,\ldots, A_d^*)$ both admit a square
function estimate with respect to $F$. 
\end{itemize}
\end{corollary}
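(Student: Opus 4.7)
The plan is to prove the two implications separately by invoking the results established earlier in the section together with the Kalton--Weis theorem on the automatic $R$-sectoriality under property $(\Delta)$. Neither direction requires new machinery; the main task is to verify that the hypotheses of the relevant prior statements are met.

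For the implication (i) $\Rightarrow$ (ii), I would first note that property $(\Delta)$ forces $X$ to have finite cotype, so both $X$ and $X^*$ have finite cotype. Restricting the joint functional calculus in (i) to the $k$-th variable shows that each $A_k$ admits a bounded $H^\infty(\Sigma_{\theta_k})$ functional calculus on $X$ for every $\theta_k\in(\omega_k,\pi)$. Applying \cite[Theorem 5.3]{KW1} then yields that $A_k$ is $R$-sectorial of $R$-type $\theta_k$ for each $\theta_k\in(\omega_k,\pi)$; infimizing over $\theta_k$ gives $R$-type $\omega_k$. For the square function estimates, Theorem \ref{H-SFE} applied to $(A_1,\ldots,A_d)$ on $X$ delivers a square function estimate with respect to any $F\in H^\infty_{0,1}(\Sigma_{\nu_1}\times\cdots\times\Sigma_{\nu_d})$ with $\nu_k\in(\omega_k,\pi)$. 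To obtain the corresponding estimates for $(A_1^*,\ldots,A_d^*)$, I use the identity (\ref{adjoint}) to transfer the joint functional calculus from $(A_1,\ldots,A_d)$ to $(A_1^*,\ldots,A_d^*)$ on $X^*$; then Theorem \ref{H-SFE} applies again because $X^*$ has finite cotype by assumption.

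For the implication (ii) $\Rightarrow$ (i), the hypotheses of Remark \ref{Converse} (1) are already in place: $X$ is reflexive with finite cotype (from $(\Delta)$), each $A_k$ is $R$-sectorial of $R$-type $\omega_k$ by assumption, and for every choice of $\nu_k\in(\omega_k,\pi)$ and every $F\in H^\infty_{0,1}(\Sigma_{\nu_1}\times\cdots\times\Sigma_{\nu_d})$, both $(A_1,\ldots,A_d)$ and $(A_1^*,\ldots,A_d^*)$ admit square function estimates with respect to $F$. A direct invocation of the remark therefore produces an $H^\infty(\Sigma_{\theta_1}\times\cdots\times\Sigma_{\theta_d})$ joint functional calculus for every $\theta_k\in(\omega_k,\pi)$.

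The hardest part is really just organizational: making sure that finite cotype is available on both $X$ and $X^*$ (so that Theorem \ref{H-SFE} can be applied on each side), and verifying that the square function estimates hold \emph{for all} $F\in H^\infty_{0,1}$ in the larger sector as required by Remark \ref{Converse} (1). No other technical obstacle arises, since this corollary is essentially a clean packaging of Theorem \ref{H-SFE}, Remark \ref{Converse} (1), and \cite[Theorem 5.3]{KW1}.
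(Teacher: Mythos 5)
Your proof is correct and takes essentially the same approach as the paper: the paper's proof is exactly the combination of \cite[Theorem 5.3]{KW1} (to extract $R$-sectoriality of $R$-type $\omega_k$ from the single-variable restrictions of the joint functional calculus, using property $(\Delta)$ and the fact that it implies finite cotype), Theorem \ref{H-SFE} applied to $(A_1,\ldots,A_d)$ on $X$ and to $(A_1^*,\ldots,A_d^*)$ on $X^*$ via the adjoint identity (\ref{adjoint}), and Remark \ref{Converse} (1) for the converse direction. Your bookkeeping of the angles (choosing the functional-calculus angle $\theta_k$ strictly below the square-function angle $\nu_k$, and infimizing over $\theta_k$ to obtain $R$-type $\omega_k$) is the same routine verification implicit in the paper's one-line derivation.
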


In the rest of this section, we discuss the following natural
problem : 

\begin{itemize}
\item [{\bf (P)}]
{\it Let $F_k\in H_0^\infty(\Sigma_{\nu_k})$, with
$\nu_k\in(\omega_k,\pi)$ for all $k=1,\ldots,d$. Consider
$F_1\otimes\cdots\otimes F_d$ in $H^\infty_0(\Sigma_{\nu_1}\times
\cdots\times\Sigma_{\nu_d})$ given by
$(F_1\otimes\cdots\otimes F_d)(z_1,\ldots, z_d)= F_1(z_1)\cdots
F_d(z_d)$. Assume that for each $k=1,\ldots,d$, 
$A_k$ admits a square function estimate with respect to $F_k$.
Does it imply that the $d$-tuple $(A_1,\ldots,A_d)$  admits a square
function estimate with respect to $F_1\otimes\cdots\otimes F_d$?
}
\end{itemize}

\smallskip
Assume first that $d=2$. Observe that if $A_1$ (resp.
$A_2$) admits a square function estimate with respect to $F_1$
(resp. $F_2$), then for any $x\in X$, the function
$(t_1,t_2)\mapsto F_1(t_1A_1)F_2(t_2A_2)x$ belongs to
$\gamma(\Omega_0;\gamma(L^2(\Omega_0);X))$, with an estimate
\begin{equation}\label{JointSFE}
\bignorm{(t_1,t_2)\mapsto 
F_1(t_1A_1)F_2(t_2A_2)x}_{\gamma(\Omega_0;\gamma(L^2(\Omega_0;X))}\,\lesssim\,\norm{x}.
\end{equation}
Indeed for $i=1,2$,
let $\Gamma_i\colon X\to \gamma(L^2(\Omega_0);X)$ be the operator
taking any $x\in X$ to the function $t\mapsto F_i(tA_i)x$.
Let $\widetilde{\Gamma_2}\colon \gamma(L^2(\Omega_0);X)
\to \gamma(L^2(\Omega_0); \gamma(L^2(\Omega_0);X))$ be the tensor
extension of $\Gamma_2$ defined by $\widetilde{\Gamma_2}(u)=\Gamma_2\circ u$
for any $u\in\gamma(L^2(\Omega);X)$. Then the operator
$$
\widetilde{\Gamma_2}\circ\Gamma_1\colon X\longrightarrow
\gamma(L^2(\Omega_0); \gamma(L^2(\Omega_0);X))
$$
takes any $x\in X$
to the function 
$(t_1,t_2)\mapsto F_1(t_1A_1)F_2(t_2A_2)x$. 
This yields the estimate
(\ref{JointSFE}).

It follows from this observation that  $(A_1,A_2)$ 
admits a square
function estimate with respect to $F_1\otimes F_2$ provided that 
the natural inclusion $L^2(\Omega_0)\otimes L^2(\Omega_0)\otimes X\to
L^2(\Omega_0^2)\otimes X$ extends to a bounded map
\begin{equation}\label{Type2}
\gamma(L^2(\Omega_0);\gamma(L^2(\Omega_0);X))\,\longrightarrow
\gamma(L^2(\Omega_0^2);X).
\end{equation}
By a simple reiteration argument, the above result extends to any $d\geq 2$. 
That is, if $X$ satisfies (\ref{Type2}), if
$A_k$ admits a square function estimate with respect to $F_k$
for any $k=1,\ldots,d$, then
$(A_1,\ldots,A_d)$  admits a square
function estimate with respect to $F_1\otimes\cdots\otimes F_d$.

We note that property (\ref{Type2}) is satisfied if 
either $X$ has property $(\alpha)$
or $X$ has type 2 \cite[Proposition 2.7]{VVW}. Consequently,
the above problem ${\bf (P)}$ has an affirmative answer if
either $X$ has property $(\alpha)$
or $X$ has type 2.

\begin{example}\label{Ex}
We now turn to a counterexample. For any 
$1<p<\infty$, let $S^p$ denote the $p$-Schatten class
on $\ell^2$. Let $(e_k)_{k\geq 1}$ denote the
standard basis of $\ell^2$ and let
$c\in B(\ell^2)$ be the positive selfadjoint operator satisfying
$c(e_k) = 2^{-k}e_k$ for any $k\geq 1$. Let $A_1, A_2\colon S^p\to S^p$
be the left and right
multiplication operators defined by $A_1(x)=cx$
and $A_2(x)=xc$, respectively, for
$x\in S^p$. It is well-known that $A_1,A_2$ are sectorial operators
with dense range, and that 
they admit an $H^\infty(\Sigma_\theta)$ functional calculus for
any $\theta\in(0,\pi)$ (see e.g. \cite[Section 8.A]{JLX} for details).
Further $A_1$ and $A_2$ commute. It follows from the proof 
of \cite[Theorem 3.9]{LLLM} that if $p\not=2$, 
then for any $\theta_1,\theta_2\in (0,\pi)$,
the couple $(A_1,A_2)$ does not have an 
$H^\infty(\Sigma_{\theta_1}\times\Sigma_{\theta_2})$ 
joint functional calculus.

Assume that $1<p<2$ and let $p'=\frac{p}{p-1}$ be its conjugate number.
Let $F_1,F_2$ be non zero functions in $H^\infty_0(\Sigma_{\nu})$ for 
some $\nu\in(0,\pi)$, and let $F=F_1\otimes F_2$. Since 
$A_1$ admits an $H^\infty(\Sigma_\theta)$ functional calculus 
for any $\theta\in(0,\pi)$, both $A_1$ and $A_1^*$ admit square function
estimates with respect to $F_1$. Likewise $A_2$ and $A_2^*$ admit square function
estimates with respect to $F_2$. 
The Banach space
$S^{p*}=S^{p'}$ has type 2, hence it follows from the discussion  
preceding this example that $(A_1^*,A_2^*)$ admits a 
square function
estimates with respect to $F$. If $(A_1,A_2)$ admitted a 
square function
estimates with respect to $F$ as well, it would follow from
Theorem \ref{SFE-H} that $(A_1,A_2)$ admits an 
$H^\infty(\Sigma_{\theta}\times\Sigma_{\theta})$ joint functional calculus
for some $\theta$, which is false. Consequently,
$(A_1,A_2)$ does not admit a
square function
estimate with respect to $F=F_1\otimes F_2$.
\end{example}

We refer the reader to \cite[Chapters 8-10]{JLX} for more examples
of sectorial operators on non-commutative $L^p$-spaces with
an $H^\infty$ functional calculus.

\section{Bounded $H^\infty$-functional calculus and dilation
on $K$-convex spaces}

This section is devoted to dilation properties of 
commuting families of bounded analytic
semigroups and their connections
with joint functional calculus. We start with some background on $K$-convexity and
state two elementary lemmas.

Let $(g_j)_{j\geq 1}$ be a independent family of complex valued
standard Gaussian variables on some probability space 
$(\S,\mathbb{P})$. Let $G\subset L^2(\S)$ be the closed linear span
of the $g_j$ and let $q\colon L^2(\S)\to L^2(\S)$ be the
orthogonal projection with range equal to $G$.

Let $X$ be a Banach space.
We say that $X$ is $K$-convex if $q\otimes I_X\colon 
L^2(\S)\otimes X\to L^2(\S)\otimes X$ extends to a bounded
map $q_X$ from $L^2(\S; X)$ into itself. In this case, the range
of $q_X$ is equal to $G(X)$, 
the closure of all finite sums $\sum_j g_j\otimes x_j$,
with $x_j\in X$. We refer the
reader to \cite{M} for various characterizations of 
$K$-convex spaces and more information. 
For readers not familiar with this notion we
mention that for any $1<p<\infty$, either classical 
or noncommutative $L^p$-spaces are $K$-convex.
Also we note that if $X$ is $K$-convex, then its dual space
$X^*$ is $K$-convex as well. Further any 
$K$-convex Banach space has finite cotype.

It follows from
the definition of $\gamma$-spaces that
if $H$ is any separable Hilbert space, then
$\gamma(H^*;X)$ is isometrically isomorphic
to $G(X)$.
In this section we will work
with $H=L^2(\Rdb^d)$, that we identify with its dual 
space in the usual way. It follows from
above that if $X$ is $K$-convex, we have a direct 
sum decomposition
\begin{equation}\label{K}
L^2(\S;X) = \gamma(L^2(\Rdb^d);X)\oplus Z_X,
\end{equation}
for some subspace $Z_X$ of $L^2(\S;X)$.

Assume that $X$ is a reflexive Banach space
and let 
$(A_1,\ldots,A_d)$ be a commuting $d$-tuple of
sectorial operators on $X$. For any 
$\Lambda\subset\{1,\ldots,d\}$, let $X_\Lambda$
be defined by (\ref{X-Lambda}). Recall
that $(A_1^*,\ldots,A_d^*)$ is a commuting $d$-tuple of
sectorial operators on $X^*$. Then we similarly
define $X^*_\Lambda\subset X^*$
as the intersection of $\bigcap_{k\in \Lambda} \overline{R(A_k^*)}$
and $\bigcap_{k\notin \Lambda} N(A_k^*)$. 
Let us use the notation $(X_\Lambda)^*$ for the dual
space of $X_\Lambda$. The closeness of the notations
$(X_\Lambda)^*$ and $X^*_\Lambda$ is justified by 
the fact they are isomorphic, as follows.

\begin{lemma}\label{Ergodic3}
The mapping $X^*_\Lambda\to (X_\Lambda)^*$ which 
takes any $\eta\in X^*_\Lambda$ to its restriction
$\eta_{\vert X_\Lambda}$, is an isomorphism.
\end{lemma}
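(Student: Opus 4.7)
The plan is to identify, for each $\Lambda\subset\{1,\ldots,d\}$, the projection $P_\Lambda : X\to X$ with range $X_\Lambda$ provided by Lemma \ref{Ergodic2}, and to exhibit an explicit inverse of the restriction map using this projection. Namely, for any $\xi\in(X_\Lambda)^*$ I set $\tilde\xi := \xi\circ P_\Lambda \in X^*$, and I claim that $\xi\mapsto\tilde\xi$ is the required inverse of the restriction map $\eta\mapsto \eta_{\vert X_\Lambda}$.

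For surjectivity, the key point is to verify that $\tilde\xi$ lies in $X^*_\Lambda$. For $k\notin\Lambda$, the inclusion $X_\Lambda\subset N(A_k)$ implies that $P_\Lambda$ vanishes on $R(A_k)$; hence $\tilde\xi(A_kx)=\xi(P_\Lambda A_kx)=0$ for every $x\in D(A_k)$, which gives $\tilde\xi\in D(A_k^*)$ and $A_k^*\tilde\xi=0$, i.e.\ $\tilde\xi\in N(A_k^*)$. For $k\in\Lambda$, the fact that $P_\Lambda$ annihilates $N(A_k)$ shows that $\tilde\xi$ vanishes on $N(A_k)$; the duality identity $N(A_k)^\perp=\overline{R(A_k^*)}$, valid in any reflexive space, then yields $\tilde\xi\in\overline{R(A_k^*)}$. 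Since $P_\Lambda$ is the identity on $X_\Lambda$, one has $\tilde\xi_{\vert X_\Lambda}=\xi$, proving surjectivity.

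For injectivity, suppose $\eta\in X^*_\Lambda$ vanishes on $X_\Lambda$. By Lemma \ref{Ergodic2} (applied, as it is in the excerpt, to $X$), it suffices to show that $\eta$ vanishes on $X_{\Lambda'}$ for every $\Lambda'\neq\Lambda$. Pick $k$ in the symmetric difference $\Lambda\triangle\Lambda'$. If $k\in\Lambda\setminus\Lambda'$, then $X_{\Lambda'}\subset N(A_k)$ while $\eta\in\overline{R(A_k^*)}\subset N(A_k)^\perp$. If $k\in\Lambda'\setminus\Lambda$, then $X_{\Lambda'}\subset\overline{R(A_k)}$ while $\eta\in N(A_k^*)$, so that $\langle A_ky,\eta\rangle=\langle y,A_k^*\eta\rangle=0$ for all $y\in D(A_k)$, and by density $\eta$ annihilates $\overline{R(A_k)}$. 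Either way, $\eta_{\vert X_{\Lambda'}}=0$, and injectivity follows.

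The restriction map is clearly bounded of norm $\leq 1$, and the constructed inverse satisfies $\|\tilde\xi\|\leq \|P_\Lambda\|\,\|\xi\|$, so both maps are bounded and we obtain the stated isomorphism. The only point where some care is needed is the duality relation $N(A_k)^\perp=\overline{R(A_k^*)}$ in a reflexive space; this can be handled by applying the single-operator ergodic decomposition (\ref{Ergodic1}) to $X^*$ and combining it with the elementary identity $\overline{R(A_k)}^\perp=N(A_k^*)$, which follows immediately from the definition of the adjoint.
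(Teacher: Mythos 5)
Your proof is correct, and it takes a slightly different, more constructive route than the paper. The paper's proof is a two-liner: from $N(A_k)^\perp=\overline{R(A_k^*)}$ and $\overline{R(A_k)}^\perp=N(A_k^*)$ it identifies the annihilator $X_\Lambda^\perp$ with $\bigoplus_{\Lambda'\neq\Lambda}X^*_{\Lambda'}$ inside the dual decomposition $X^*=\bigoplus_{\Lambda'}X^*_{\Lambda'}$, and then invokes standard duality ($(X_\Lambda)^*\cong X^*/X_\Lambda^\perp$). You instead work on the primal side, using the projection $P_\Lambda$ attached to the decomposition of $X$ from Lemma \ref{Ergodic2}, and exhibit the explicit inverse $\xi\mapsto\xi\circ P_\Lambda$; your injectivity argument is in substance the same annihilator computation as the paper's, and your surjectivity argument replaces abstract quotient duality by a concrete extension operator, which also gives an explicit norm bound $\|\tilde\xi\|\leq\|P_\Lambda\|\,\|\xi\|$. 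Both arguments ultimately rest on the same two duality identities, and your treatment of $N(A_k)^\perp=\overline{R(A_k^*)}$ via the ergodic decomposition of $X^*$ together with $\overline{R(A_k)}^\perp=N(A_k^*)$ is exactly right. One small step you should make explicit: the assertion that $X_\Lambda\subset N(A_k)$ ``implies'' $P_\Lambda$ vanishes on $R(A_k)$ (and, for $k\in\Lambda$, that $P_\Lambda$ annihilates $N(A_k)$) is not a formal consequence of that inclusion alone; what you really need is that $\ker P_\Lambda=\bigoplus_{\Lambda'\neq\Lambda}X_{\Lambda'}$ contains $\overline{R(A_k)}$ when $k\notin\Lambda$, i.e.\ the identities $\overline{R(A_k)}=\bigoplus_{\Lambda'\ni k}X_{\Lambda'}$ and $N(A_k)=\bigoplus_{\Lambda'\not\ni k}X_{\Lambda'}$, which follow readily from the construction in Lemma \ref{Ergodic2} (each block lies in the appropriate summand of the ergodic decomposition of $A_k$, and the two partial sums are complementary). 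With that sentence added, the argument is complete.
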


\begin{proof}
For any $k=1,\ldots,d$, $N(A_k)^\perp =\overline{R(A_k^*)}$
and $\overline{R(A_k)}^\perp
=N(A_k^*)$. Hence we have
$$
X_\Lambda^\perp = \,
\bigoplus_{\substack{\Lambda'\subset\{1,\ldots,d\}\\ \Lambda'\not=\Lambda}} X^*_{\Lambda'}.
$$
Then the result follows from standard duality principles.
\end{proof}

Recall the sequence $(\Phi_m)_{m\geq 1}$ given by 
(\ref{Phi_m}). According to \cite[Lemma 6.5]{JLX},
which holds true on any Banach space, we have the
following fact.

\begin{lemma}\label{Integral}
Let $(T_t)_{t\geq 0}$ be a bounded analytic semigroup
on $X$, and let $A$ be its
negative generator. For any $m\geq 1$,
the function
$s\mapsto AT_s\Phi_m(A)$ belongs to $L^1(\Rdb_+^*;B(X))$, and
$$
\int_0^\infty A T_s\Phi_m(A)\,ds\, = \,\Phi_m(A).
$$
\end{lemma}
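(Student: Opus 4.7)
The plan is to establish the integrability of $s\mapsto AT_s\Phi_m(A)$ in $L^1(\Rdb_+^*;B(X))$ and then identify the integral via a fundamental theorem of calculus argument applied to the semigroup.

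First I would factorize $\Phi_m$ by writing $\Phi_m(z)=z\,h_m(z)$ where $h_m(z)=\frac{m^2}{(m+z)(1+mz)}$. Since $h_m\in H^\infty_0(\Sigma_\theta)$ for any $\theta\in(0,\pi)$, the operator $h_m(A)$ is bounded on $X$ with range contained in $D(A)$, and $\Phi_m(A)=A\,h_m(A)=h_m(A)\,A$ in the appropriate sense. Similarly $z\Phi_m(z)=\frac{m^2 z^2}{(m+z)(1+mz)}$ is bounded on $\Sigma_\theta$, so $A\Phi_m(A)\in B(X)$.

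For the $L^1$ estimate, split $\Rdb_+^*$ into $(0,1]$ and $[1,\infty)$. On $(0,1]$, write $AT_s\Phi_m(A)=T_s\bigl(A\Phi_m(A)\bigr)$ and use the uniform bound $\sup_{s\geq 0}\norm{T_s}<\infty$ together with the boundedness of $A\Phi_m(A)$ to conclude that $\norm{AT_s\Phi_m(A)}$ is uniformly bounded on $(0,1]$. On $[1,\infty)$, write $AT_s\Phi_m(A)=A^2 T_s\,h_m(A)$ and invoke the standard estimate for bounded analytic semigroups,
$$
\norm{A^2 T_s}\,\leq\,\frac{C}{s^2},\qquad s>0,
$$
to get $\norm{AT_s\Phi_m(A)}\lesssim s^{-2}\norm{h_m(A)}$, which is integrable at infinity. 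This shows $s\mapsto AT_s\Phi_m(A)\in L^1(\Rdb_+^*;B(X))$.

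For the identity, I would use that $\frac{d}{ds}T_s=-AT_s$ on the relevant domains; applied to the bounded operator $\Phi_m(A)$ (whose range lies in $D(A)$), this gives
$$
T_s\Phi_m(A)\,=\,\Phi_m(A)\,-\,\int_0^s AT_r\Phi_m(A)\,dr,\qquad s>0,
$$
the integral being understood in $B(X)$. It remains to show that $T_s\Phi_m(A)\to 0$ in operator norm as $s\to\infty$. For this, write $T_s\Phi_m(A)=AT_s\,h_m(A)$ and use $\norm{AT_s}\leq M/s$ to obtain $\norm{T_s\Phi_m(A)}\leq M s^{-1}\norm{h_m(A)}\to 0$. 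Letting $s\to\infty$ in the displayed identity yields the desired formula.

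The only delicate point is the factorization and the commutations $T_s A= AT_s$ on $R(h_m(A))\subset D(A)$, which are entirely standard consequences of the sectorial functional calculus, so I do not expect any real obstacle.
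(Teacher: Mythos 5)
Your proof is correct and gives a clean, self-contained argument; note that the paper does not actually prove this lemma but simply cites \cite[Lemma 6.5]{JLX}, so your write-up supplies what the paper leaves to the reference. One small inaccuracy worth flagging: $h_m(z)=\frac{m^2}{(m+z)(1+mz)}$ does \emph{not} belong to $H^\infty_0(\Sigma_\theta)$, because $h_m(z)\to m\neq 0$ as $z\to 0$, so it fails the required decay at the origin. This is harmless for your argument, since the only facts you actually use are that $h_m(A)=m^2(m+A)^{-1}(1+mA)^{-1}$ is a bounded operator with range in $D(A^2)$ and that $A\,h_m(A)=\Phi_m(A)$, $A^2 h_m(A)=A\Phi_m(A)\in B(X)$, all of which follow directly from the explicit resolvent product formula rather than from any membership of $h_m$ in $H^\infty_0$. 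With that correction the splitting of $\Rdb_+^*$ into $(0,1]$ and $[1,\infty)$, the use of $\|A T_s\|\lesssim s^{-1}$ and $\|A^2T_s\|\lesssim s^{-2}$ for bounded analytic semigroups, the fundamental-theorem-of-calculus identity on $D(A)$ applied to $\Phi_m(A)x$, and the limit $T_s\Phi_m(A)=AT_s h_m(A)\to 0$ in operator norm as $s\to\infty$ all go through as you describe. (Strong measurability of $s\mapsto AT_s\Phi_m(A)$ is automatic since $s\mapsto T_s$ is norm-continuous on $(0,\infty)$ for an analytic semigroup, so the Bochner integral in $B(X)$ is well defined.)
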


Let 
$\bigl((T^1_t)_{t\geq 0},\ldots, (T^d_t)_{t\geq 0}\bigr)$
be a $d$-tuple of bounded $C_0$-semigroups on 
$X$ and for any $k=1,\ldots,d$, let $A_k$ be the negative generator
of $(T^k_t)_{t\geq 0}$. It is plain to see that the sectorial operators
$A_1,\ldots,A_d$ commute
if and only if the semigroups $(T^1_t)_{t\geq 0},\ldots, (T^d_t)_{t\geq 0}$
commute, that is, $T^{k}_{t}T^{k'}_{t'}=T^{k'}_{t'}T^{k}_{t}$ for any 
$1\leq k,k'\leq d$ and any $t,t'\geq 0$.

The following theorem 
should be regarded as a multivariable version of 
\cite[Theorem 5.1]{FW} on $K$-convex reflexive spaces. 
It is also related albeit 
different from
\cite[Theorem 4.8]{ArLM}. Note that
Definition \ref{Bdd-FC2} is used in the statement.

\begin{theorem}\label{d-FW1}
Assume that $X$ is reflexive and $K$-convex. Let 
$\bigl((T^1_t)_{t\geq 0},\ldots, (T^d_t)_{t\geq 0}\bigr)$
be a $d$-tuple of commuting bounded analytic semigroups, and 
let $A_1,\ldots, A_d$ denote their negative generators.
Assume that $(A_1,\ldots,A_d)$ admits an
$H^\infty(\Sigma_{\theta_1}\times\cdots\times\Sigma_{\theta_d})$ joint functional
calculus, for some $\theta_k\in(0,\frac{\pi}{2})$, 
$k=1,\ldots,d$. Then there
exist a measure space $(\Omega, dm)$, two bounded operators
$J\colon X\to L^2(\Omega;X)$ and $Q\colon L^2(\Omega;X)\to X$,
as well as a $d$-tuple 
$\bigl((U^1_t)_{t\in\footnotesize{\Rdb}},\ldots, 
(U^d_t)_{t\in\footnotesize{\Rdb}}\bigr)$
of commuting bounded $C_0$-groups on $L^2(\Omega;X)$ such that:
\begin{itemize}
\item [(a)] The $d$-tuple $(B_1,\ldots,B_d)$, where $B_k$ is the negative 
generator of $(U^k_t)_{t}$ for all $k=1,\ldots,d$, admits 
an $H^\infty(\Sigma_{\frac{\pi}{2}}\times\cdots\times \Sigma_{\frac{\pi}{2}})$
joint functional calculus;
\item [(b)] For any $t_1,\ldots,t_d$ in $\Rdb_+$, we have
$$
T^1_{t_1}\cdots T^d_{t_d}\,=\, Q U^1_{t_1}\cdots U^d_{t_d} J.
$$
\end{itemize}
\end{theorem}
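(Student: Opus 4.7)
The plan is to adapt the one-variable dilation of \cite[Theorem~5.1]{FW} to the commuting multivariable setting. The dilation space will be built from the Gaussian probability space by means of $K$-convexity, and the groups $(U^k_t)$ will be induced by unitary multiplication operators that, on the Fourier side, correspond to translations; the Cauchy identity
\begin{equation*}
e^{-tz}\,=\,\frac{1}{2\pi}\int_{\Rdb}\frac{e^{it\xi}}{z+i\xi}\,d\xi,\qquad t\geq 0,\ z\in\Sigma_\theta,\ \theta<\tfrac{\pi}{2},
\end{equation*}
tensorized over $k=1,\dots,d$, will provide the algebraic reproducing formula underlying (b).

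First I would set up the ambient space and groups. Fix $\nu_k\in(\theta_k,\tfrac{\pi}{2})$; by monotonicity the hypothesis extends to an $H^\infty(\Sigma_{\nu_1}\times\cdots\times\Sigma_{\nu_d})$ joint functional calculus, and since $X$ is $K$-convex it has finite cotype, so Theorem~\ref{H-SFE} yields square function estimates for $(A_1,\dots,A_d)$ with respect to any $F\in H^\infty_{0,1}(\Sigma_{\nu_1}\times\cdots\times\Sigma_{\nu_d})$; by (\ref{adjoint}) and $K$-convexity of $X^*$, the same holds for $(A_1^*,\dots,A_d^*)$. Let $(\S,\Pdb)$ be the Gaussian probability space and take $\Omega=\S$, so that by (\ref{K}) the space $L^2(\Omega;X)=L^2(\S;X)$ contains $\gamma(L^2(\Rdb^d);X)$ as a complemented subspace. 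For each $k$, the pointwise-unitary multiplication operator $f\mapsto e^{it_k\xi_k}f$ on $L^2(\Rdb^d)$ lifts, via Lemma~\ref{Tensorisation}, to an isometry on $\gamma(L^2(\Rdb^d);X)$; extending by the identity on the complement yields commuting isometric $C_0$-groups $(U^k_{t_k})_{t_k\in\Rdb}$ on $L^2(\Omega;X)$. The negative generator $B_k$ acts on the $\gamma$-component as the lift of multiplication by $-i\xi_k$ on $L^2(\Rdb^d)$; for $f\in H^\infty_{0,1}(\Sigma_{\frac{\pi}{2}}\times\cdots\times\Sigma_{\frac{\pi}{2}})$, the maximum principle gives $\|f(-i\xi_1,\dots,-i\xi_d)\|_{L^\infty(\Rdb^d)}\leq\|f\|_{\infty,\Sigma_{\frac{\pi}{2}}\times\cdots\times\Sigma_{\frac{\pi}{2}}}$, Lemma~\ref{Tensorisation} transfers this bound to $\gamma(L^2(\Rdb^d);X)$, and on the complement $B_k=0$ while $f(0,\dots,0)$ is well-defined by the $H^\infty_{0,1}$ structure. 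Altogether $(B_1,\dots,B_d)$ admits the joint $H^\infty(\Sigma_{\frac{\pi}{2}}\times\cdots\times\Sigma_{\frac{\pi}{2}})$ calculus, proving (a).

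For (b), I would factor the Cauchy kernel as $(2\pi(z+i\xi))^{-1}=\alpha(\xi,z)\beta(\xi,z)$ with $\alpha(\cdot,z),\beta(\cdot,z)$ holomorphic on $\Sigma_{\nu_k}$; the choice must be refined (by inserting regularizing factors $\varphi_k\in H^\infty_0(\Sigma_{\nu_k})$) so that the two maps
\begin{equation*}
Jx\,=\,\bigl((\xi_1,\dots,\xi_d)\mapsto\textstyle\prod_k\alpha(\xi_k,A_k)x\bigr),\qquad
J_*y\,=\,\bigl((\xi_1,\dots,\xi_d)\mapsto\textstyle\prod_k\widetilde\beta(\xi_k,A_k^*)y\bigr),
\end{equation*}
with $\widetilde\beta(\xi,z)=\overline{\beta(\xi,\bar z)}$, are bounded into $\gamma(L^2(\Rdb^d);X)$ and $\gamma(L^2(\Rdb^d);X^*)$ via the square function estimates from Theorem~\ref{H-SFE}. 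Take $Q=J_*^*$ through the trace duality on $\gamma$-spaces. Then
\begin{equation*}
\langle QU^1_{t_1}\cdots U^d_{t_d}Jx,y\rangle\,=\,\int_{\Rdb^d}e^{i(t_1\xi_1+\cdots+t_d\xi_d)}\Bigl\langle\textstyle\prod_k\frac{1}{2\pi(A_k+i\xi_k)}\,x,\,y\Bigr\rangle\,d\xi,
\end{equation*}
and Fubini combined with the Cauchy identity in each variable collapses the right-hand side to $\langle T^1_{t_1}\cdots T^d_{t_d}x,y\rangle$, giving (b). The main obstacle is producing a factorization $\alpha,\beta$ for which both $J$ and $J_*$ are genuinely bounded into the respective $\gamma$-spaces under only the hypothesis of $K$-convexity (without, e.g., $R$-sectoriality of the $A_k$); the naive symmetric choice $\alpha=\beta=(2\pi(z+i\xi))^{-1/2}$ produces Paley-type integrals whose $\gamma$-norms diverge, so a regularized factorization with the approximants $\Phi_m$ of (\ref{Phi_m}) and a limit argument are required, together with Lemma~\ref{Ergodic2} to treat the kernel components $N(A_k)$ separately, where the Cauchy formula returns $0$ rather than the identity and $T^k_{t_k}$ must therefore be matched by a direct isometric embedding into the complement of $\gamma(L^2(\Rdb^d);X)$ in $L^2(\Omega;X)$.
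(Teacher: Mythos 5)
Your global architecture (dilation inside $L^2(\S;X)$ via the $\gamma$-space $\gamma(L^2(\Rdb^d);X)$ and the $K$-convexity splitting (\ref{K}), commuting groups obtained by tensor extension through Lemma \ref{Tensorisation}, $Q$ defined by trace duality from a second square function for the adjoints, and Lemma \ref{Ergodic2} for the kernel components) is the same as the paper's, and your proof of (a) is essentially the paper's argument read on the Fourier side. But there is a genuine gap, and it sits exactly where the theorem has its content: the construction of bounded maps $J$, $J_*$ and the verification of (b). You start from the Cauchy-kernel identity $e^{-tz}=\frac{1}{2\pi}\int_\Rdb e^{it\xi}(z+i\xi)^{-1}d\xi$, which is only conditionally convergent, and you yourself record that the symmetric factorization $\alpha=\beta=(2\pi(z+i\xi))^{-1/2}$ gives divergent $\gamma$-norms; the promised repair (``a regularized factorization with the approximants $\Phi_m$ and a limit argument'') is never exhibited. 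Moreover, the displayed formula for $\langle QU^1_{t_1}\cdots U^d_{t_d}Jx,y\rangle$ and the subsequent ``Fubini combined with the Cauchy identity'' are not justified as stated: $\xi_k\mapsto\langle(A_k+i\xi_k)^{-1}x,y\rangle$ only decays like $|\xi_k|^{-1}$, so the interchange of integrals requires precisely the absolute convergence you have not secured. As written, (b) is therefore not proved.

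The paper closes this gap by avoiding the Cauchy kernel altogether and working on the time side. For $x$ in $X_1=\overline{R(A_1)}\cap\cdots\cap\overline{R(A_d)}$ it sets $J_1x=\varphi_x$ with $\varphi_x(s)=A_1^{1/2}T^1_{s_1}\cdots A_d^{1/2}T^d_{s_d}x$ on $\Rdb_+^{*d}$ and $0$ elsewhere; this is bounded into $\gamma(\Rdb^d;X)$ because it is precisely the square function of Theorem \ref{H-SFE} for $F(z)=\prod_k z_k^{1/2}e^{-z_k}$ (the factor $s_k^{1/2}$ converting $ds_k/s_k$ into Lebesgue measure). The groups are the translation groups, $\widetilde J_1$ is the analogous map for $(A_1^*,\ldots,A_d^*)$, and $Q_1=2^d\widetilde J_1^{\,*}$. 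Then $\langle U^1_{t_1}\cdots U^d_{t_d}J_1x_m,\widetilde J_1y\rangle$ with $x_m=\Phi_m(A_1)\cdots\Phi_m(A_d)x$ is an absolutely convergent integral which Lemma \ref{Integral} evaluates exactly as $2^{-d}\langle T^1_{t_1}\cdots T^d_{t_d}x_m,y\rangle$, and (\ref{Approx}) removes the approximants; no conditionally convergent Fourier integrals occur. The kernel directions need no separate embedding: on $N(A_k)$ the semigroup $T^k$ is the identity, so the same argument on each $X_\Lambda$ with $F_\Lambda(z)=\prod_{k\in\Lambda}z_k^{1/2}e^{-z_k}$, glued by Lemma \ref{Ergodic2}, finishes the proof. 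If you wish to keep your Fourier-side formulation, the correct factorization is $\alpha(\xi,z)=z^{1/2}(z+i\xi)^{-1}$ and $\beta(\xi,z)=z^{1/2}(z-i\xi)^{-1}$, which produce the absolutely convergent Poisson-type identity $\frac{1}{\pi}\int_\Rdb e^{it\xi}\,z(z^2+\xi^2)^{-1}\,d\xi=e^{-tz}$ and are exactly the Fourier transforms of the paper's kernels $s\mapsto z^{1/2}e^{-sz}\mathbf{1}_{s>0}$; with that choice your argument becomes the paper's proof conjugated by the Fourier transform on $L^2(\Rdb^d)$.
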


\begin{proof}
We set $X_1=\overline{R(A_1)}\cap\cdots\cap\overline{R(A_d)}$
and $X^*_1 = \overline{R(A_1^*)}\cap\cdots\cap\overline{R(A_d^*)}$.

For any $x\in X_1$, let $\varphi_x\colon\Rdb^d\to X$ be defined
by $\varphi_x(s_1,\ldots,s_d) = 
A_1^\frac12 T^1_{s_1}\cdots A_d^\frac12 T^d_{s_d}x$ if
$(s_1,\ldots,s_d)\in \Rdb_+^{*d}$, and $\varphi_x(s_1,\ldots,s_d) = 0$
otherwise.

Consider $F\in 
H_0^\infty(\Sigma_{\theta_1}\times\cdots\times\Sigma_{\theta_d})$
defined by 
\begin{equation}\label{F}
F(z_1,\ldots,z_d) =\,\bigl(z_1^\frac12 e^{-z_1}\bigr)
\bigl(z_2^\frac12 e^{-z_2}\bigr)\cdots
\bigl(z_d^\frac12 e^{-z_d}\bigr).
\end{equation}
By the joint functional calculus assumption and
Theorem \ref{H-SFE}, $(A_1,\ldots,A_d)$ admits a
square function estimate with respect to $F$. Since
$$
F(s_1A_1,\ldots,s_d A_d)=s_1^\frac12
A_1^\frac12 T^1_{s_1}\cdots s_d^\frac12A_d^\frac12 T^d_{s_d}
$$
for any $s_1,\ldots,s_d >0$,
this implies that
$\varphi_x$ belongs to $\gamma(\Rdb^d;X)$ for any $x$ and that 
we have a uniform estimate
$$
\norm{x}_F=
\norm{\varphi_x}_{\gamma(\Rdb^d;X)}\,\lesssim\,
\norm{x},\qquad x\in X_1.
$$
Using the embedding of $\gamma(L^2(\Rdb^d);X)$
into $L^2(\S;X)$ discussed at the beginning
of this section, this yields a bounded operator
$$
J_1\colon X_1\longrightarrow L^2(\S;X)
$$
given by $J_1(x) = \varphi_x$, for any $x\in X_1$.

Likewise we have a bounded operator
$$
\widetilde{J}_1\colon X^*_1\longrightarrow L^2(\S;X^*)
$$
given by $\widetilde{J}_1(y) = \widetilde{\varphi}_y$ for any 
$y\in X^*_1$, where $\widetilde{\varphi}_y(s_1,\ldots,s_d) = 
A_1^{* \frac12} T^{1 *}_{s_1}\cdots A_d^{* \frac12}  T^{d *}_{s_d} y$ if
$(s_1,\ldots,s_d)\in \Rdb_+^{*d}$, and $\widetilde{\varphi}_y
(s_1,\ldots,s_d) = 0$
otherwise.

For any $k=1,\ldots,d$,
and any $t\in\Rdb$,
let $\tau^k_t\colon L^2(\Rdb^d)\to L^2(\Rdb^d)$
be the translation operator defined by setting
\begin{equation}\label{Tau}
[\tau^k_t(h)](s_1,\ldots,s_d)
= h(s_1,\ldots,s_{k-1},s_k+t,s_{k+1},\ldots,s_d),
\qquad (s_1,\ldots,s_d)\in\Rdb^d,
\end{equation}
for any $h\in L^2(\Rdb^d)$.
According to Lemma \ref{Tensorisation}, 
$\tau^k_t\otimes I_X$ extends to an 
isometric isomorphism 
$$
V^k_t\colon\gamma(L^2(\Rdb^d);X)
\longrightarrow \gamma(L^2(\Rdb^d);X).
$$
A thorough look at this tensor extension process shows that
$V_t^k(u)=u\circ\tau^k_{-t}$ for any $u\in\gamma(L^2(\Rdb^d);X)$.
We deduce that for any $\zeta\in\gamma(\Rdb^d;X)$, we have
\begin{equation}\label{Action1}
[V^k_t(u_\zeta)]=u_{\zeta_t^k},
\end{equation}
where $\zeta_t^k\in\gamma(\Rdb^d;X)$ is given by
\begin{equation}\label{Action2}
\zeta_t^k(s_1,\ldots,s_d)
=\zeta(s_1,\ldots,s_{k-1},s_k+t,s_{k+1},\ldots,s_d),
\qquad (s_1,\ldots,s_d)\in\Rdb^d.
\end{equation}
Moreover for any $k=1,\ldots,d$,
$(V_t^k)_{t\in\footnotesize{\Rdb}}$ is an
isometric $C_0$-group on $\gamma(L^2(\Rdb^d);X)$.
Using (\ref{K}), which follows from the $K$-convexity of $X$, we now define 
$$
U^k_t\colon L^2(\S;X)\longrightarrow
L^2(\S;X)
$$
by setting $U^k_t(u,w)= (V^k_t(u),w)$
for any $u\in\gamma(L^2(\Rdb^d);X)$ and any $w\in Z_X$.
It follows from above that 
$\bigl((U^1_t)_{t\in\footnotesize{\Rdb}},\ldots, 
(U^d_t)_{t\in\footnotesize{\Rdb}}\bigr)$
is a commuting family of
bounded $C_0$-groups on $L^2(\S;X)$.

Let $x\in X_1$, let $y\in X_1^*$
and let $t_1,\ldots, t_d\geq 0$. Since
$X$ is reflexive, we have
\begin{equation}\label{L2X}
L^2(\S;X^*)=L^2(\S;X)^*
\end{equation}
isometrically, see e.g. \cite[Section IV.1]{DU}.
Then according to (\ref{Action1}), (\ref{Action2}) 
and trace duality on $\gamma$-spaces
(see \cite[Section 9.1.f]{BOOK}), we have
$$
\bigl\langle
U^1_{t_1}\cdots U^d_{t_d} \,J_1(x),
\widetilde{J}_1(y)\bigr\rangle\,
=\, \int_{\Rdb^d}\bigl\langle
\varphi_x(s_1+t_1,\ldots, s_d+t_d),
\widetilde{\varphi}_y(s_1,\ldots,s_d)
\bigr\rangle\, ds_1\cdots ds_d\,.
$$

Let $x_m=\Phi_m(A_1)\cdots \Phi_m(A_d)x$ for any $m\geq 1$.
Using the definitions of $\varphi_x, \widetilde{\varphi}_y$
and the above identity, we obtain that 
\begin{align*}
\bigl\langle
U^1_{t_1}\cdots U^d_{t_d} &
J_1(x_m),\widetilde{J}_1(y)\bigr\rangle\,
 \\ &
=\,\int_0^\infty\cdots\int_0^\infty
\bigl\langle (A_1 T^1_{2s_1+t_1}\Phi_m(A_1))\cdots
(A_d T^d_{2s_d+t_d}\Phi_m(A_1))(x),y\bigr\rangle\, ds_1\cdots ds_d
\\ & 
=\,\int_0^\infty\cdots\int_0^\infty
\bigl\langle (A_1 T^1_{2s_1}\Phi_m(A_1))\cdots
(A_d T^d_{2s_d}\Phi_m(A_1))(T^1_{t_1}\cdots T^d_{t_d}
x),y\bigr\rangle\, ds_1\cdots ds_d\,.
\end{align*}
Applying Lemma \ref{Integral} 
for each $A_k$, we deduce that
$$
\bigl\langle
U^1_{t_1}\cdots U^d_{t_d} J_1(x_m),\widetilde{J}_1 (y)\bigr\rangle\,
=\,2^{-d}\bigl\langle
T^1_{t_1}\cdots T^d_{t_d}x_m,y\bigr\rangle.
$$ 
Since $x\in X_1$, 
we know that $x_m\to x$, by (\ref{Approx}). 
We infer that
$$
\bigl\langle
U^1_{t_1}\cdots U^d_{t_d} 
J_1(x),\widetilde{J}_1(y)\bigr\rangle\,
=\,2^{-d}\bigl\langle
T^1_{t_1}\cdots T^d_{t_d}x,y\bigr\rangle.
$$
Using Lemma \ref{Ergodic3}, let us identify $X_1^*$ with $(X_1)^*$
and note that 
$L^2(\S;X^*)^*= L^2(\S;X)$, by (\ref{L2X}). Set 
$Q_1=2^d\widetilde{J}_1^*\colon L^2(\S;X)\to
X_1$. Then the above identity shows that 
$$
Q_1 U^1_{t_1} \cdots U^d_{t_d} J_1 = T^1_{t_1}\cdots
{T^d_{t_d}}_{\big\vert X_1}.
$$

For any $k=1,\ldots, d$, let 
$B_k$ (resp. $C_k$) be the negative generator of the $C_0$-group
$(U_t^k)_{t}$ (resp. $(V_t^k)_{t}$). Then let $c_k$
be the negative generator of the $C_0$-group $(\tau^k_t)_t$ on $L^2(\Rdb^d)$.
It is easy to check that for any $\theta_1,\ldots,\theta_d$
in $(\frac{\pi}{2},\pi)$ and any $f\in H^\infty_{0,1}(\Sigma_{\theta_1}\times
\cdots\times\Sigma_{\theta_d})$, the operator
$f(C_1,\ldots,C_d)\colon \gamma(L^2(\Rdb^d);X)\to  
\gamma(L^2(\Rdb^d);X)$ coincides with
$f(c_1,\ldots,c_d)\otimes I_X$ on 
$L^2(\Rdb^d)\otimes X$. According to
Lemma \ref{Tensorisation}, this implies
that 
$$
\norm{f(C_1,\ldots,C_d)}_{\gamma(L^2;X)\to \gamma(L^2;X)}
=\norm{f(c_1,\ldots,c_d)}_{L^2\to L^2}.
$$
Now since 
$(\tau^1_t)_t,\ldots,(\tau^d_t)_t$ are 
commuting unitary groups, it follows from spectral theory that 
$\norm{f(c_1,\ldots,c_d)}\leq\norm{f}_{\infty,\Sigma_{\frac{\pi}{2}}^d}$. 
This proves that $(C_1,\ldots,C_d)$
admits 
an $H^\infty(\Sigma_{\frac{\pi}{2}}\times\cdots\times \Sigma_{\frac{\pi}{2}})$
joint functional calculus. It readily implies
that the same holds true for $(B_1,\ldots,B_d)$.

What we have proved so far is that the theorem holds true 
on $X_1$ instead of $X$. Now it is easy to adapt the argument 
to show that more generally, the theorem holds true 
on $X_\Lambda$ for any $\Lambda\subset\{1,\ldots,d\}$, using
$$
F_\Lambda(z_1,\ldots,z_d) = \,\prod_{k\in\Lambda}\bigl(z_k^{\frac12}
e^{-z_k}\bigr)
$$
instead of (\ref{F}). 

Finally
the result holds true on $X$ using Lemma \ref{Ergodic2}
and a simple direct sum argument.
\end{proof}

\begin{remark}
Using the $\gamma^p$-spaces (see \cite[Section 9.1]{BOOK}) 
in place of the $\gamma$-spaces considered in the present paper,
we easily obtain that for any $1<p<\infty$,
Theorem \ref{d-FW1} holds true as well with $L^p(\Omega;X)$
in place of $L^2(\Omega;X)$.
\end{remark}

The following  result provides a converse to Theorem \ref{d-FW1},
up to an $R$-sectoriality assumption on the $A_k$.

\begin{theorem}\label{d-FW2}
Let 
$\bigl((T^1_t)_{t\geq 0},\ldots, (T^d_t)_{t\geq 0}\bigr)$
be a $d$-tuple of commuting bounded analytic semigroups on
some Banach space $X$, and let $A_1,\ldots, A_d$ denote their negative generators.
Assume that each $A_k$ is $R$-sectorial of $R$-type $\omega_k<\frac{\pi}{2}$.
Assume further
that there
exist a Banach space $Y$, two bounded operators
$J\colon X\to Y$ and $Q\colon Y\to X$,
as well as a $d$-tuple $\bigl((U^1_t)_{t\in\footnotesize{\Rdb}},\ldots, 
(U^d_t)_{t\in\footnotesize{\Rdb}}\bigr)$
of commuting bounded $C_0$-groups on $Y$ such that:
\begin{itemize}
\item [(a)] The $d$-tuple $(B_1,\cdots,B_d)$, where $B_k$ is the negative 
generator of $(U^k_t)_{t}$ for all $k=1,\ldots,d$, admits 
an $H^\infty(\Sigma_{\theta_1}\times\cdots\times \Sigma_{\theta_d})$
joint functional calculus, for some $\theta_k\in [\frac{\pi}{2},\pi)$;
\item [(b)] For any $t_1,\ldots,t_d$ in $\Rdb_+$, we have
$$
T^1_{t_1}\cdots T^d_{t_d}\,=\, Q U^1_{t_1}\cdots U^d_{t_d}J.
$$
\end{itemize}
Then $(A_1,\ldots,A_d)$ admits an
$H^\infty(\Sigma_{\theta'_1}\times\cdots\times\Sigma_{\theta'_d})$ joint functional
calculus, for all $\theta'_k\in(\omega_k,\frac{\pi}{2})$, $k=1,\ldots,d$.
\end{theorem}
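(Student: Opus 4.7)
The plan is to promote the semigroup dilation in assumption (b) to a dilation of the joint $H^\infty$ functional calculus at sector angles strictly greater than $\pi/2$, and then invoke Theorem \ref{d-KW} to reduce these angles below $\pi/2$. Since the $U^k$ form bounded $C_0$-groups, each $B_k$ is sectorial of type exactly $\pi/2$ with $\sigma(B_k) \subset i\Rdb$; assumption (a), read together with Definition \ref{Bdd-FC2} in the borderline case $\theta_k = \pi/2$, then produces some angles $\theta_k > \pi/2$ for which $(B_1, \ldots, B_d)$ admits an $H^\infty(\Sigma_{\theta_1} \times \cdots \times \Sigma_{\theta_d})$ joint calculus.

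The core computation is a resolvent dilation identity: for any nonempty $\Lambda \subset \{1, \ldots, d\}$ and any $z_k$ with $\mathrm{Re}(z_k) < 0$ for $k \in \Lambda$,
$$\prod_{k \in \Lambda} R(z_k, A_k) \,=\, Q \prod_{k \in \Lambda} R(z_k, B_k) \, J.$$
This is obtained by setting $t_{k'} = 0$ for $k' \notin \Lambda$ in assumption (b), giving $\prod_{k \in \Lambda} T^k_{t_k} = Q \prod_{k \in \Lambda} U^k_{t_k} J$ for all $t_k \geq 0$, and then taking the iterated Laplace transform in the variables $(t_k)_{k \in \Lambda}$. The integral converges absolutely by boundedness of the semigroups and of the group, and reproduces $(-1)^{|\Lambda|}\prod_{k \in \Lambda} R(z_k,\cdot)$ on each side via $R(z,A) = -\int_0^\infty e^{zt} T_t\, dt$ (valid for $\mathrm{Re}(z) < 0$), and likewise for $B_k$. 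Next, choose $\nu_k \in (\pi/2, \theta_k)$; then $\partial \Sigma_{\nu_k}$ lies in the open left half-plane. Substituting this identity into (\ref{fAi}) yields, for each $f_\Lambda \in H_0^\infty(\prod_{k \in \Lambda} \Sigma_{\theta_k})$ with $\Lambda \neq \emptyset$,
$$f_\Lambda(A_1, \ldots, A_d) \,=\, Q\, f_\Lambda(B_1, \ldots, B_d)\, J,$$
while the constant case $\Lambda = \emptyset$ holds because $QJ = I_X$ (set all $t_k = 0$ in (b)). Summing over the direct decomposition (\ref{Hinfini01}) gives $f(A_1,\ldots,A_d) = Q\, f(B_1,\ldots,B_d)\, J$ for every $f \in H^\infty_{0,1}(\Sigma_{\theta_1} \times \cdots \times \Sigma_{\theta_d})$, and the joint calculus bound for $(B_1,\ldots,B_d)$ immediately implies $\|f(A_1,\ldots,A_d)\| \lesssim \|f\|_{\infty,\Sigma_{\theta_1}\times\cdots\times\Sigma_{\theta_d}}$.

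This establishes an $H^\infty(\Sigma_{\theta_1}\times\cdots\times\Sigma_{\theta_d})$ joint functional calculus for $(A_1,\ldots,A_d)$ at some $\theta_k > \pi/2$. Since each $A_k$ is $R$-sectorial of $R$-type $\omega_k < \pi/2$, Theorem \ref{d-KW} descends these angles to any prescribed $\theta'_k \in (\omega_k, \pi/2)$, which is the conclusion sought. The main subtlety is making the Laplace representation and the contour integral mesh: we need strict inequality $\theta_k > \pi/2$ so that $\partial \Sigma_{\nu_k}$ fits in the open left half-plane where the Laplace-transform formula for $R(z,A_k)$ is valid, and we need a joint calculus for $(B_1,\ldots,B_d)$ at those angles; this is precisely why interpreting assumption (a) through Definition \ref{Bdd-FC2} is necessary when one or more $\theta_k$ equals $\pi/2$.
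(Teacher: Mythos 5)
Your proof is correct and follows essentially the same route as the paper: derive the resolvent dilation identity from assumption (b) via the Laplace formula, push it through the Cauchy integral (\ref{fAi}) to obtain $f(A_1,\ldots,A_d) = Q\,f(B_1,\ldots,B_d)\,J$, read off the bound from the calculus for $(B_1,\ldots,B_d)$, and invoke Theorem \ref{d-KW} to reduce the angles below $\pi/2$. Your treatment is slightly more careful than the paper's, in that you explicitly establish the resolvent identity for each nonempty $\Lambda$ (and note $QJ=I_X$ for the constant piece) rather than only the full product, and you spell out how Definition \ref{Bdd-FC2} resolves the borderline case $\theta_k = \pi/2$; these are genuine details the paper leaves implicit, but not a different argument.
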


\begin{proof} By condition (b) and the Laplace
formula, we have
$$
R(z_1,A_1)\cdots R(z_d,A_d) = QR(z_1,B_1)\cdots R(z_d,B_d)J.
$$
for any complex numbers  $z_1,\cdots, z_d$ 
with negative real parts.  This implies, by (\ref{fAi}), that
$$
f(A_1,\ldots,A_d)\,=\, Qf(B_1,\ldots,B_d)J,
$$
and hence
\begin{equation}\label{domination}
\norm{f(A_1,\ldots,A_d)}\leq\,\norm{Q}\norm{J}\,
\norm{f(B_1,\ldots,B_d)},
\end{equation}
for any $\theta_1,\ldots,\theta_d \in (\frac{\pi}{2},\pi)$ and any
$f\in H_{0,1}^\infty
(\Sigma_{\theta_1}\times\cdots\times \Sigma_{\theta_d})$.

By condition (a), we may choose $\theta_1,\ldots,\theta_d$ in such a way that 
$(B_1,\cdots,B_d)$ admits 
an $H^\infty(\Sigma_{\theta_1}\times\cdots\times \Sigma_{\theta_d})$
joint functional calculus. Then by (\ref{domination}),
$(A_1,\ldots,A_d)$ admits an
$H^\infty(\Sigma_{\theta_1}\times\cdots\times\Sigma_{\theta_d})$ joint functional
calculus as well. Now applying Theorem \ref{d-KW}, we obtain
the result.
\end{proof}

Recall 
that if $X$ has property $(\Delta)$, then any sectorial operator
$A$ on $X$
with an $H^\infty(\Sigma_\theta)$ functional calculus 
is automatically $R$-sectorial 
of $R$-type $\theta$ \cite[Theorem 5.3]{KW1}. 
Combining this result with Theorems \ref{d-FW1} and \ref{d-FW2}, we 
obtain the following corollary. 
We note that any UMD Banach
space is both reflexive, $K$-convex (see e.g. \cite[Section 4.3]{BOOK0})
and has property $(\Delta)$ (see \cite[Proposition 3.2]{KW1});
hence the next result applies to such spaces.

\begin{corollary}
Let $X$ be a reflexive, $K$-convex Banach space, 
with property $(\Delta)$. Let 
$\bigl((T^1_t)_{t\geq 0},\ldots, (T^d_t)_{t\geq 0}\bigr)$
be a $d$-tuple of commuting bounded analytic semigroups 
on $X$, and 
let $A_1,\ldots, A_d$ denote their negative generators.
The following assertions are equivalent.
\begin{itemize}
\item [(i)] 
The $d$-tuple
$(A_1,\ldots,A_d)$ admits an
$H^\infty(\Sigma_{\theta_1}\times\cdots\times\Sigma_{\theta_d})$ joint functional
calculus, for some $\theta_k\in(0,\frac{\pi}{2})$, 
$k=1,\ldots,d$. 
\item [(ii)] Each $A_k$ is $R$-sectorial of $R$-type $<\frac{\pi}{2}$
and there exist a measure space $(\Omega, dm)$, two bounded operators
$J\colon X\to L^2(\Omega;X)$ and $Q\colon L^2(\Omega;X)\to X$,
as well as a $d$-tuple 
$\bigl((U^1_t)_{t\in\footnotesize{\Rdb}},\ldots, 
(U^d_t)_{t\in\footnotesize{\Rdb}}\bigr)$
of commuting bounded $C_0$-groups on $L^2(\Omega;X)$ such that:
\begin{itemize}
\item [(a)] The $d$-tuple $(B_1,\ldots,B_d)$, where $B_k$ is the negative 
generator of $(U^k_t)_{t}$ for all $k=1,\ldots,d$, admits 
an $H^\infty(\Sigma_{\frac{\pi}{2}}\times\cdots\times \Sigma_{\frac{\pi}{2}})$
joint functional calculus;
\item [(b)] For any $t_1,\ldots,t_d$ in $\Rdb_+$, we have
$$
T^1_{t_1}\cdots T^d_{t_d}\,=\, Q U^1_{t_1}\cdots U^d_{t_d} J.
$$
\end{itemize}
\end{itemize}
\end{corollary}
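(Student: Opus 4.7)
The plan is to establish the equivalence by assembling the two dilation theorems already proved in this section, with the $R$-sectoriality half of (ii) supplied by the Kalton--Weis automatic $R$-sectoriality result on spaces with property $(\Delta)$. Both directions should then be essentially immediate, modulo checking that the angles produced and the angles required line up.

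For the direction (i) $\Rightarrow$ (ii), I would first invoke Theorem \ref{d-FW1}, whose hypotheses ($X$ reflexive and $K$-convex, $(A_1,\ldots,A_d)$ commuting negative generators of bounded analytic semigroups admitting a joint $H^\infty$ calculus at angles $<\frac{\pi}{2}$) are exactly what (i) provides. This at once produces the measure space $(\Omega,dm)$, the operators $J$ and $Q$, the commuting bounded $C_0$-groups $(U^k_t)_{t}$ on $L^2(\Omega;X)$, and both the joint $H^\infty(\Sigma_{\frac{\pi}{2}}\times\cdots\times\Sigma_{\frac{\pi}{2}})$ calculus for $(B_1,\ldots,B_d)$ and the dilation factorization. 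To obtain the $R$-sectoriality assertion, I would restrict the joint functional calculus in (i) to functions depending on a single variable; this yields that each $A_k$ admits an $H^\infty(\Sigma_{\theta_k})$ calculus with $\theta_k<\frac{\pi}{2}$. Since $X$ has property $(\Delta)$, \cite[Theorem 5.3]{KW1} then guarantees that each $A_k$ is $R$-sectorial of $R$-type $\theta_k<\frac{\pi}{2}$.

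For the direction (ii) $\Rightarrow$ (i), I would invoke Theorem \ref{d-FW2} with $Y=L^2(\Omega;X)$ and the data given in (ii). The $R$-sectoriality of $R$-type $\omega_k<\frac{\pi}{2}$ for each $A_k$ is part of the hypothesis, condition (a) of (ii) is exactly the joint functional calculus hypothesis of Theorem \ref{d-FW2} for angles equal to $\frac{\pi}{2}$ (which lies in the admissible range $[\frac{\pi}{2},\pi)$ by virtue of Definition \ref{Bdd-FC2}), and the factorization (b) matches as well. The conclusion is that $(A_1,\ldots,A_d)$ admits an $H^\infty(\Sigma_{\theta'_1}\times\cdots\times\Sigma_{\theta'_d})$ joint functional calculus for every choice of $\theta'_k\in(\omega_k,\frac{\pi}{2})$, and any such choice witnesses (i).

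I do not anticipate a genuine obstacle here; the corollary is essentially a packaging statement. The only bookkeeping check worth performing carefully is the angle compatibility: Theorem \ref{d-FW1} produces joint calculus at the boundary angle $\frac{\pi}{2}$, and Theorem \ref{d-FW2} explicitly allows its input angles to equal $\frac{\pi}{2}$ (through Definition \ref{Bdd-FC2}), so the two results feed into each other without loss. The invocation of \cite[Theorem 5.3]{KW1} is likewise the only point where property $(\Delta)$, as opposed to mere $K$-convexity, is used.
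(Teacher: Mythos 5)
Your proposal is correct and follows exactly the route the paper intends: the dilation direction is Theorem \ref{d-FW1}, the converse is Theorem \ref{d-FW2} applied with $Y=L^2(\Omega;X)$ and $\theta_k=\frac{\pi}{2}$ (admissible via Definition \ref{Bdd-FC2}), and the $R$-sectoriality in (ii) comes from restricting the joint calculus to single variables together with \cite[Theorem 5.3]{KW1} on spaces with property $(\Delta)$. Nothing further is needed.
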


To conclude, we wish to emphasize that for
a commuting family of generators of bounded $C_0$-groups,
to have an 
$H^\infty(\Sigma_{\frac{\pi}{2}}\times
\cdots\times \Sigma_{\frac{\pi}{2}})$
joint functional calculus is a very strong property.

\begin{proposition}\label{HP0}
Let $\bigl((U^1_t)_{t\in\footnotesize{\Rdb}},\ldots, 
(U^d_t)_{t\in\footnotesize{\Rdb}}\bigr)$ be
a commuting family of bounded $C_0$-groups on some Banach
space $Y$. The following are equivalent.
\begin{itemize}
\item [(i)] The $d$-tuple $(B_1,\ldots,B_d)$, where $B_k$ is the negative 
generator of $(U^k_t)_{t}$ for all $k=1,\ldots,d$, admits 
an $H^\infty(\Sigma_{\frac{\pi}{2}}\times
\cdots\times \Sigma_{\frac{\pi}{2}})$
joint functional calculus.
\item [(ii)] There exists a constant $K\geq 0$ such that for any
$b\in L^1(\Rdb^d)$,
\begin{equation}\label{HP}
\Bignorm{\int_{\Rdb^d} b(t_1,\ldots,t_d)U^1_{t_1}\cdots
U^d_{t_d}\, dt_1\cdots dt_d}\leq\, K\norm{\widehat{b}}_{\infty,\Rdb^d}.
\end{equation}
\end{itemize}
\end{proposition}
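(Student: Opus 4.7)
This proposition is a multivariable Hieber--Pr\"uss-type criterion. The plan is, in each direction, to identify
$$
T_b\,=\,\int_{\Rdb^d} b(t)\,U^1_{t_1}\cdots U^d_{t_d}\,dt
$$
with a value of the joint sectorial functional calculus $f(B_1,\ldots,B_d)$, for a bounded holomorphic function $f$ whose boundary trace on $(i\Rdb)^d$ equals $\widehat{b}$ and whose sup norm equals $\|\widehat{b}\|_\infty$.

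For (ii) $\Rightarrow$ (i), fix $\theta_k\in(\pi/2,\pi)$ and $f\in H^\infty_0(\Sigma_{\theta_1}\times\cdots\times\Sigma_{\theta_d})$. Since each $\theta_k>\pi/2$, $f$ is bounded and holomorphic on a conical neighbourhood of $(i\Rdb)^d$. The $H^\infty_0$-decay combined with contour shifts $\xi_k\mapsto\xi_k-i\eta_k\operatorname{sgn}(t_k)$ performed inside the sector gives that
$$
b_f(t)\,=\,\frac{1}{(2\pi)^d}\int_{\Rdb^d} f(i\xi_1,\ldots,i\xi_d)\,e^{it\cdot\xi}\,d\xi
$$
lies in $L^1(\Rdb^d)$ with $\widehat{b_f}=f\bigl|_{(i\Rdb)^d}$. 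Substituting the Laplace-type identities $R(z,B_k)=\int_0^\infty e^{-sz}U^k_{-s}\,ds$ for $\Re z>0$ and $R(z,B_k)=-\int_0^\infty e^{sz}U^k_s\,ds$ for $\Re z<0$ into (\ref{fAi}) and applying Fubini shows $f(B_1,\ldots,B_d)=T_{b_f}$; then (ii) gives $\|f(B_1,\ldots,B_d)\|\leq K\|\widehat{b_f}\|_\infty\leq K\|f\|_\infty$. Lemma \ref{DenseRange} extends this estimate to $H^\infty_{0,1}$.

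For (i) $\Rightarrow$ (ii), first take $b\in L^1(\Rdb^d)$ supported in a single orthant $Q_\epsilon=\{t\in\Rdb^d:\operatorname{sgn}(t_k)=\epsilon_k\}$. Then
$$
f_b(z_1,\ldots,z_d)\,=\,\int_{Q_\epsilon}b(t)\,e^{-t\cdot z}\,dt
$$
is bounded holomorphic on the product of half-planes $\{\epsilon_k\Re z_k>0\}$, with $\|f_b\|_\infty=\|\widehat{b}\|_\infty$ by Phragm\'en--Lindel\"of applied coordinatewise and the boundary identity $f_b(i\xi)=\widehat{b}(\xi)$. Fubini yields $f_b(B_1,\ldots,B_d)=T_b$, and since each reversed group $(U^k_{-t})_t$ is again bounded, the tuple $(\epsilon_1 B_1,\ldots,\epsilon_d B_d)$ itself satisfies (i). Regularising $f_b$ by $\Phi_m^{\otimes d}$ (uniformly bounded on every sector and converging to the identity on $\bigcap_k\overline{R(B_k)}$ by (\ref{Approx})) approximates $f_b$ by $H^\infty_0$-functions on strictly larger sectors, so (i) yields $\|T_b\|\leq K\|\widehat{b}\|_\infty$. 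For a general $b$, a smooth Fourier-side partition of unity $1=\sum_{\epsilon\in\{\pm\}^d}\chi_\epsilon$ subordinate to the $2^d$ orthants decomposes $\widehat{b}=\sum_\epsilon\widehat{b}\,\chi_\epsilon$ with each $\|\widehat{b}\,\chi_\epsilon\|_\infty\leq\|\widehat{b}\|_\infty$, reducing the problem to the single-orthant case up to holomorphic error terms to be absorbed.

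The main obstacle is this last step of (i) $\Rightarrow$ (ii): a hard cutoff $b\chi_{Q_\epsilon}$ would not preserve the control on $\|\widehat{\,\cdot\,}\|_\infty$, since the multi-dimensional Riesz projections are unbounded on $L^\infty$. A smooth Fourier-side partition of unity repairs this at the cost of producing mass outside each orthant, and controlling the resulting holomorphic extensions so that the contributions from the $2^d$ orthants combine to the bound $K\|\widehat{b}\|_\infty$ relies on the full flexibility of hypothesis (i), namely that the $H^\infty$ estimate holds uniformly for every choice of $\theta_k\in(\pi/2,\pi)$.
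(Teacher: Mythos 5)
Your ``(ii)$\,\Rightarrow\,$(i)'' argument is essentially the paper's: for $f\in H^\infty_0$ on sectors of angle $>\frac{\pi}{2}$, the boundary restriction is the Fourier/Laplace transform of an $L^1$ function and Fubini applied to (\ref{fAi}) identifies $f(B_1,\ldots,B_d)$ with $T_{b_f}$ (note that the contour $\partial\Sigma_{\nu_k}$ with $\nu_k>\frac{\pi}{2}$ lies in the closed left half-plane, so only one of your two resolvent representations is actually used and $b_f$ is supported in $\Rdb_+^d$). However, your final appeal to Lemma \ref{DenseRange} is not legitimate: the generators $B_k$ of bounded $C_0$-groups need not have dense range (take $U^k_t=I_Y$, so $B_k=0$), and $Y$ is not assumed reflexive, so there is no ergodic decomposition to fall back on. You must handle the components of $H^\infty_{0,1}$ that depend only on a proper subset $\Lambda$ of the variables directly; the paper does this by multiplying $b$ by $\prod_{j\notin\Lambda}(ne^{-nt_j})$, which produces $f(B_1,\ldots,B_d)\prod_{j\notin\Lambda}n(nI_Y+B_j)^{-1}$ with the same Fourier sup-norm, and then letting $n\to\infty$ strongly.

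The genuine gaps are in ``(i)$\,\Rightarrow\,$(ii)''. First, your reduction to orthant-supported $b$ via a smooth Fourier-side partition of unity does not reduce to anything: replacing $\widehat{b}$ by $\widehat{b}\,\chi_\epsilon$ means convolving $b$ with the inverse transform of $\chi_\epsilon$, and the resulting piece is no longer supported in the orthant $Q_\epsilon$, so the ``single-orthant case'' does not apply to it; the ``error terms to be absorbed'' that you mention are precisely the whole difficulty, and you give no mechanism to control them. The paper's reduction is entirely different and elementary: since the $U^k$ are bounded groups, for $b$ supported in $[-s,\infty)^d$ one translates by $s$ and pulls out $U^1_{-s}\cdots U^d_{-s}$, and such $b$ are dense in $L^1(\Rdb^d)$; hence one may assume $b\in L^1(\Rdb_+^d)$. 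Second, even for such $b$, regularising by $\Phi_m^{\otimes d}$ does not make hypothesis (i) applicable: $f_b=L_b$ is holomorphic only on the product of open right half-planes, and multiplication by $\Phi_m^{\otimes d}$ adds decay at $0$ and $\infty$ but cannot extend the domain of holomorphy to sectors of angle $>\frac{\pi}{2}$, which is exactly what (i) (via Definition \ref{Bdd-FC2}) requires --- this is the obstruction the paper states explicitly before introducing its workaround. In addition, $\Phi_m(B_k)\to I_Y$ strongly only on $\overline{R(B_k)}$, which need not be all of $Y$. The missing idea is to regularise the operators rather than the function: with $h_\varepsilon(z)=\varepsilon+z(1+\varepsilon z)^{-1}$ the operators $B_{k,\varepsilon}=h_\varepsilon(B_k)$ are bounded and invertible, $L_b\circ h_\varepsilon$ lies in the extended class $\E(\Sigma_\theta)\otimes\cdots\otimes\E(\Sigma_\theta)$ for some $\theta>\frac{\pi}{2}$, the composition rule and (i) give $\norm{L_b(B_{1,\varepsilon},\ldots,B_{d,\varepsilon})}\leq K\norm{\widehat{b}}_{\infty,\Rdb^d}$, and a Yosida-type strong limit $\varepsilon\to0$ then yields (\ref{HP}). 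Without this (or an equivalent device), your proof of (i)$\,\Rightarrow\,$(ii) does not go through.
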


In this statement, $\widehat{b}\colon\Rdb^d\to\Cdb$
denotes the Fourier transform of 
$b\in L^1(\Rdb^d)$ and the integral in the left-hand side 
of (\ref{HP}) is defined in the strong sense. Property
(ii) means that the mapping 
$b\mapsto \int_{\Rdb^d} b(t_1,\ldots,t_d)U^1_{t_1}\cdots
U^d_{t_d}\, dt_1\cdots dt_d$ ``extends" to a bounded homomorphism 
$C_0(\Rdb^d)\to B(Y)$. More precisely, there exists
a (necessarily unique) bounded homomorphism
$\rho\colon C_0(\Rdb^d)\to B(Y)$ such that
$$
\rho(\widehat{b}) = \,\int_{\Rdb^d} b(t_1,\ldots,t_d)U^1_{t_1}\cdots
U^d_{t_d}\, dt_1\cdots dt_d,\qquad b\in L^1(\Rdb^d).
$$
This property is optimal since by Fourier-Plancherel,
$\norm{\widehat{b}}_{\infty,\Rdb^d}$ coincides with the norm of the operator
$\int_{\Rdb^d} b(t_1,\ldots,t_d)U^1_{t_1}\cdots
U^d_{t_d}\, dt_1\cdots dt_d\,$ in the case when $Y=L^2(\Rdb^d)$
and $U_t^k= \tau_t^k$ is given by (\ref{Tau}).
See \cite{LM2} for more on this theme.

\begin{proof}[Proof of Proposition \ref{HP0}] 
For any non empty $\Lambda\subset \{1,\ldots,d\}$ and 
any $\beta \in L^1(\Rdb_+^\Lambda)$,
we let $L_\beta\colon\overline{\Sigma_{\frac{\pi}{2}}^d}\to\Cdb\,$ 
be the Laplace transform of $\beta$ defined by
$$
L_\beta(z_1,\ldots,z_d)=\,\int_{\Rdb_+^\Lambda} \beta((t_k)_{k\in\Lambda})\,
\prod_{k\in\Lambda} e^{-z_kt_k}\,\prod_{k\in\Lambda} dt_k,
\qquad (z_1,\ldots,z_d)\in \overline{\Sigma_{\frac{\pi}{2}}^d}.
$$
This function only depends on the variables $(z_k)_{k\in\Lambda}$.

\smallskip
\underline{(ii)$\,\Rightarrow\,$(i):} 
We fix $\theta\in\bigl(\frac{\pi}{2},\pi\bigr)$.
Let $f\in H^{\infty}_0(\Sigma_\theta^\Lambda)$, for some
$\Lambda\subset \{1,\ldots,d\}$, $\Lambda\not=\emptyset$.
Arguing as in \cite[Section 3.3]{Haa}, we find that 
there exists $\beta \in L^1(\Rdb_+^\Lambda)$
such that $L_\beta$ coincides with $f$ on $\Sigma_{\frac{\pi}{2}}^d$
and 
\begin{equation}\label{Lap}
f(B_1,\ldots,B_d) =\int_{\Rdb^\Lambda} \beta((t_k)_{k\in\Lambda})
\prod_{k\in\Lambda} U^k_{t_k}\, \prod_{k\in\Lambda} dt_k\,.
\end{equation}
For any integer $n\geq 1$, define
$$
b_n(t_1,\ldots,t_d) = \beta((t_k)_{k\in\Lambda})\,\prod_{j\notin\Lambda}
(ne^{-nt_j}),\qquad
t_1,\ldots,t_d\in\Rdb_+.
$$
Then $b_n\in L^1(\Rdb_+^d)$ and 
$$
\widehat{b_n}(s_1,\ldots,s_d) = \widehat{\beta}((s_k)_{k\in\Lambda})
\prod_{j\notin\Lambda}\Bigl(\frac{n}{n+is_j}\Bigr),
\qquad s_1,\ldots,s_d\in\Rdb.
$$
Consequently, 
$$
\norm{\widehat{b_n}}_{\infty,\Rdb^d} = 
\norm{f}_{\infty,\Sigma_{\frac{\pi}{2}}^d},
\qquad n\geq 1.
$$
Moreover for any $n\geq 1$, we have
$$
\int_{\Rdb^d} b_n(t_1,\ldots,t_d)U^1_{t_1}\cdots
U^d_{t_d}\, dt_1\cdots dt_d \,
=\,(-1)^{d-\vert\Lambda\vert}
f(B_1,\ldots,B_d)
\,\prod_{j\notin\Lambda}\bigl(n(nI_Y +B_j)^{-1}\bigr)
$$
by (\ref{Lap}) and the Laplace formula. Hence (ii) implies 
that for any $n\geq 1$,
$$
\Bignorm{f(B_1,\ldots,B_d)
\,\prod_{j\notin\Lambda}\bigl(n(nI_Y +B_j)^{-1}\bigr)}
\leq K \norm{f}_{\infty,\Sigma_{\frac{\pi}{2}}^d}
$$
Since $n(nI _Y+B_j)^{-1}\to I_Y$ strongly, the
above inequality yields
$\norm{f(B_1,\ldots,B_d)}
\leq K \norm{f}_{\infty,\Sigma_{\frac{\pi}{2}}^d}$.
According to the comment after \cite[Definition 2.2]{ArLM},
this implies (i).

\smallskip
\underline{(i)$\,\Rightarrow\,$(ii):} Assume (i)
and set 
$K_k=\sup\{\norm{U^k_t}\, :\, t\in\Rdb\}$ for any $k=1,\ldots,d$. 
To prove (ii), it
clearly suffices to prove an estimate 
(\ref{HP}) for $b$ lying in a dense subset of $L^1(\Rdb^d)$.
Let $b\in L^1([-s,\infty))\otimes\cdots \otimes L^1([-s,\infty))$
for some $s>0$.
Let $b_s\in L^1(\Rdb^d)$ be defined by $b_s(t_1,\ldots,t_d)
=b(t_1-s,\ldots,t_d -s)$ for any $(t_1,\ldots, t_d)\in\Rdb_d$.
Then 
\begin{align*}
\int_{\Rdb^d} b(t_1,\ldots,t_d)U^1_{t_1}\cdots
U^d_{t_d}\, dt_1\cdots dt_d\,
&=\, \int_{\Rdb^d} b_s(t_1,\ldots,t_d)U^1_{t_1-s}\cdots
U^d_{t_d-s}\, dt_1\cdots dt_d\\
&=\, U^1_{-s}\cdots U^d_{-s} \int_{\Rdb^d} 
b_s(t_1,\ldots,t_d)U^1_{t_1}\cdots
U^d_{t_d}\, dt_1\cdots dt_d\,.
\end{align*}
Hence 
$$
\Bignorm{\int_{\Rdb^d} b(t_1,\ldots,t_d)U^1_{t_1}\cdots
U^d_{t_d}\, dt_1\cdots dt_d}\leq\, 
K_1\cdots K_d\,\Bignorm{\int_{\Rdb^d} b_s(t_1,\ldots,t_d)U^1_{t_1}\cdots
U^d_{t_d}\, dt_1\cdots dt_d}
$$
Moreover $\norm{\widehat{b}}_{\infty,\Rdb^d}=
\norm{\widehat{b_s}}_{\infty,\Rdb^d}$ and 
$b_s$ has support in $[0,\infty)^d$, that is, $b_s\in L^1(\Rdb_+^d)$.
Hence to prove (ii), it sufices to establish an estimate 
(\ref{HP}) for 
\begin{equation}\label{+d}
b\in L^1(\Rdb_+)\otimes\cdots \otimes L^1(\Rdb_+).
\end{equation}
If, for any $b$ as above, $L_b$ had an extension to an element of 
$H^\infty_{0,1}(\Sigma_\theta^d)$
for some $\theta>\frac{\pi}{2}$,  then the argument in the proof of
``(ii)$\,\Rightarrow\,$(i)" would provide an estimate  (\ref{HP}).
This does not hold true in general and the rest of the proof
is just a way to circumvent that.

Following \cite[Section 2.2]{Haa} we let 
$$
\E(\Sigma_\theta)=H^\infty_0(\Sigma_\theta)\oplus{\rm Span}\{1, R\}\,
\subset \, H^\infty(\Sigma_\theta),
$$
where $R$ is defined by $R(z)=(1+z)^{-1}$.
Then for any $\theta\in\bigl(\frac{\pi}{2},\pi\bigr)$, 
holomorphic functional calculus associated with
$(B_1,\ldots,B_d)$ can be naturally defined on
$$
\E(\Sigma_\theta)\otimes\cdots\otimes 
\E(\Sigma_\theta)\,\subset\, H^\infty(\Sigma_{\theta}^d).
$$
It is easy to check (left to the reader) that the assumption (i) implies the 
existence of a constant $K\geq 1$, not depending
on $\theta$, such that
\begin{equation}\label{FC-E}
\norm{F(B_1,\ldots,B_d)}\leq\,K
\norm{F}_{\infty,\Sigma_{\frac{\pi}{2}}^d},
\qquad F\in \E(\Sigma_\theta)\otimes\cdots\otimes 
\E(\Sigma_\theta).
\end{equation}
For any $\varepsilon>0$, we set
$h_\varepsilon(z)= \varepsilon + z(1+\varepsilon z)^{-1}$ and we define
$$
B_{k,\varepsilon} = h_\varepsilon(B_k)
=\, \varepsilon I_Y+ B_k(I_Y+\varepsilon B_k)^{-1},
\qquad k=1,\ldots,d.
$$
Each of these operators is bounded and invertible, 
with spectrum included in
$\Sigma_{\frac{\pi}{2}}$.
For convenience we set 
$$
H_\varepsilon(z_1,\ldots,z_d) = \bigl(h_\varepsilon(z_1),
\ldots, h_\varepsilon(z_d)\bigr),\qquad
(z_1,\ldots,z_d)\in\Sigma_{\frac{\pi}{2}}^d.
$$
For any $a\in L^1(\Rdb_+)$, $L_a\circ h_\varepsilon$
extends to an element of $H^\infty(\Sigma_\theta)$
for some $\theta\in\bigl(\frac{\pi}{2},\pi\bigr)$
depending on $\varepsilon$. This extension is holomorphic
at $0$ and $\infty$, hence belongs to  
$\E(\Sigma_\theta)$.
Further by the
composition rule (see \cite[Section 2.4]{Haa}),
$L_a\circ h_\varepsilon(B_k) = L_a(B_{k,\varepsilon})$
for any $k=1,\ldots,d$.

This readily implies that for any
$b$ as in (\ref{+d}), we 
have
$$
\norm{L_b\circ H_\varepsilon(B_1,\ldots,B_d)}\
\leq K\norm{L_b\circ H_\varepsilon}_{\infty,\Sigma_{\frac{\pi}{2}}^d}
\leq K \norm{\widehat{b}}_{\infty,\Rdb^d},
$$
by (\ref{FC-E}), and 
$$
L_b\bigl(B_{1,\varepsilon},\ldots, B_{d,\varepsilon}\bigr)\,=\,
L_b\circ H_\varepsilon(B_1,\ldots,B_d).
$$
Now observe that 
$$
L_b\bigl(B_{1,\varepsilon},\ldots, B_{d,\varepsilon}\bigr)
\,=\,\int_{\Rdb^d} b(t_1,\ldots,t_d)\,e^{-t_1 B_{1,\varepsilon}}\cdots
e^{-t_d B_{d,\varepsilon}}\, dt_1\ldots dt_d\,.
$$
Consequently,
$$
\Bignorm{\int_{\Rdb^d} b(t_1,\ldots,t_d)\,e^{-t_1 B_{1,\varepsilon}}
\cdots
e^{-t_d B_{d,\varepsilon}}\, dt_1 \ldots dt_d}\,\leq\, K 
\norm{\widehat{b}}_{\infty,\Rdb^d}
$$
for any $\varepsilon>0$.

For any $k=1,\ldots, d$, the semigroups
$(e^{-tB_{k,\varepsilon}})_{t\geq 0}$ are uniformly bounded and
for any $t\geq 0$, $e^{-t B_{k,\varepsilon}}\to U_t^k$
strongly, when $\varepsilon \to 0$. This follows from the classical
Yosida approximation principle. Hence using Lebesgue's dominated 
convergence Theorem, the above inequality implies
an estimate 
(\ref{HP}) for $b$ satisfying (\ref{+d}).
\end{proof}

\bigskip
\noindent
{\bf Acknowledgements.} 
The two authors were supported by the French 
``Investissements d'Avenir" program, 
project ISITE-BFC (contract ANR-15-IDEX-03).

\medskip

\end{document}